\numberwithin{equation}{section}
\numberwithin{figure}{section}
\newtheorem{theorem}{Theorem}[section]
\newtheorem{proposition}[theorem]{Proposition}\newtheorem{lemma}[theorem]{Lemma}\newtheorem{corollary}[theorem]{Corollary}\theoremstyle{definition}
\newtheorem{remark}[theorem]{Remark}\newtheorem{definition}[theorem]{Definition}\numberwithin{equation}{section}
\begin{document}
\title{Cohomology of a restricted Lie algebra with a restricted derivation
in characteristic 2}
\author{Dan Mao}
\address{D. Mao: School of Mathematics and Statistics, Northeast Normal University,
Changchun 130024, China}
\email{danmao678@nenu.edu.cn}
\author{Liangyun Chen$^{*}$}
\address{L. Chen: School of Mathematics and Statistics, Northeast Normal University,
Changchun 130024, China}
\email{chenly640@nenu.edu.cn}
\thanks{{*}Corresponding author.}
\thanks{\emph{MSC}(2020). 17B40, 17B50, 17B56, 16S20}
\thanks{\emph{Key words and phrases}. Restricted Lie Algebra, Restricted derivation,
Cohomology, Deformation, Central extension.}
\thanks{Supported by NSF of Jilin Province (No. YDZJ202201ZYTS589), NNSF of
China (Nos. 12271085, 12071405).}
\begin{abstract}
This paper mainly studies the ResLieDer pair in characteristic 2,
that is, a restricted Lie algebra with a restricted derivation. We
define the restricted representation of a ResLieDer pair and the corresponding
cohomology complex. We show that a ResLieDer pair is rigid if the
second cohomology group is trivial and a deformation of order $n$
is extensible if and only if its obstruction class is trivial. Moreover,
we prove that the central extensions of a ResLieDer pair are classified
by the second cohomology group. Finally, we show that a pair of restricted
derivations is extensible if and only if its obstruction class is
trivial.

\tableofcontents{}
\end{abstract}

\maketitle

\section{Introduction}

The definition of restricted Lie algebras was first introduced by
Jacobson \cite{Jac1}. It is a modular Lie algebra $\mathfrak{g}$
equipped with a $p$-mapping $[p]:\mathfrak{g}\rightarrow\mathfrak{g}$
which has the same basic properties as the Frobenius mapping $x\rightarrow x^{p}$
of an associative algebra. Since restricted Lie algebra plays an important
role in the classification of simple modular Lie algebras \cite{BW,BW1,Str,Str2,Str3},
it has been extensively studied \cite{EFi,KJ,MS,SU}, especially the
restricted cohomology theory of it \cite{EM,EF,Fel,Hoc,May}. In \cite{Hoc},
the notion of restricted universal enveloping algebra was used to
define the restricted cohomology groups of restricted Lie algebras.
In \cite{Fel}, some vanishing and non-vanishing theorems for the
complete restricted cohomology of restricted Lie algebras were proved.
In \cite{EF}, the authors provided a suitably small complex for the
computation of restricted cohomology. However, since the non-linearity
of the $p$-mapping leads to the difficulty of the study, they only
provided the description of the first and second restricted cohomology
groups of a general restricted Lie algebra. In a simpler case of an
abelian restricted Lie algebra, the description of restricted cohomology
groups is up to dimension $p$. Recently, a complete cohomology complex
of restricted Lie algebras in charactersitic 2 has been introduced
by \cite{EM}, which has no analog in characteristic $p>2$.

Given algebraic objects, they can also become useful via derivations.
For example, \cite{Vor} provided a construction of homotopy Lie algebras
with the use of higher derived brackets. \cite{BV} showed that algebras
and their derivations played an essential role in the study of gauge
theories in quantum field theory. Moreover, algebras and their derivations
are also useful in control theory \cite{AKd}, deformation formulas
\cite{CGG} and differential Galois theory \cite{Mag}. In \cite{Lod},
the operad of associative algebras with derivations was studied. In
\cite{TFS}, the authors constructed the cohomologies of a Lie algebra
with a derivation (i.e. LieDer pair) and applied them to study the
deformation and extension of LieDer pairs. Afterwards, the results
in \cite{TFS} were extended to other algebras, such as associative
algebras \cite{DM}, Leibniz algebras \cite{Das}, 3-Lie algebras
\cite{Xu}, Lie triple systems \cite{Guo} and Leibniz triple systems
\cite{WMSC}, with derivations.

Inspired by the above facts and results, this paper studies the restricted
Lie algebra with a restricted derivation in characteristic 2, which
is called a ResLieDer pair.

The reason why we study the ResLieDer pair in characteristic 2 is
that only the general (nonabelian) restricted Lie algebra in characteristic
2 has a complete cohomology complex, which will be used to construct
a complete cohomology complex for a ResLieDer pair.

This paper is organized as follows. In Section 2, we first recall
some basic definitions on restricted Lie algebras. Then, the definitions
of a ResLieDer pair and its restricted representation are introduced.
We prove that given a restricted representation $(V,\rho,\eta_{V})$
of a ResLieDer pair $((\mathfrak{g},[2]),\mathscr{D})$, one can obtain
the semidirect product ResLieDer pair $((\mathfrak{g}\oplus V,[2]_{\rho}),\mathscr{D}+\eta_{V})$
(Proposition 2.14). In Section 3, we first recall the cohomology of
a restricted Lie algebra $(\mathfrak{g},[2])$ with coefficients in
the restricted $\mathfrak{g}$-module $(V,\rho)$ in characteristic
2. Then, a cohomology complex of a ResLieDer pair $((\mathfrak{g},[2]),\mathscr{D})$
associated to its restricted representation $(V,\rho,\eta_{V})$ is
defined. In Section 4, the deformations of a ResLieDer pair are studied.
We show that the infinitesimal deformations modulo equivalences are
controlled by the second cohomology groups with values in the adjoint
representation (Theorem 4.6) and the ResLieDer pair $((\mathfrak{g},[2]),\mathscr{D})$
is rigid if the second cohomology group is trivial ( Theorem 4.9).
We also show that a deformation of order $n$ is extensible if and
only if its obstruction class is trivial (Theorem 4.13). In Section
5, we study the central extensions of a ResLieDer pair. We show that
the central extensions of a ResLieDer pair $((\mathfrak{g},[2]_{\mathfrak{g}}),\mathscr{D}_{\mathfrak{g}})$
by a strongly abelian ResLieDer pair $((\mathfrak{h},[2]_{\mathfrak{h}}),\mathscr{D}_{\mathfrak{h}})$
are classified by the second cohomology group with values in the trivial
representation (Theorem 5.7). In Section 6, the extension problem
of a pair of restricted derivations are considered.

Throughout the paper, $\mathbb{F}$ denotes a field of characteristic
2. And all the results obtained in this paper are over the field in
characteristic 2.

\section{ResLieDer pairs and their representations}

In this section, we first recall some basic definitions on restricted
Lie algebras. Then the definitions of a ResLieDer pair and its restricted
representation are introduced.

\begin{definition} (see \cite{SF}) Let $\mathfrak{g}$ be a Lie
algebra over $\mathbb{K}$, which is a field of characteristic $p>0$.
A mapping $[p]:\mathfrak{g}\rightarrow\mathfrak{g}$, $x\mapsto x^{[p]}$
is called a $p$-mapping, if

(1) $\mathrm{ad}x^{[p]}=(\mathrm{ad}x)^{p},\;\forall x\in\mathfrak{g}$,

(2) $(kx)^{[p]}=k^{p}x^{[p]},\forall x\in\mathfrak{g},k\in\mathbb{K}$,

(3) $(x+y)^{[p]}=x^{[p]}+y^{[p]}+\sum_{i=1}^{p-1}s_{i}(x,y)$, where
$(\mathrm{ad}(x\otimes X+y\otimes1))^{p-1}(x\otimes1)=\sum_{i=1}^{p-1}is_{i}(x,y)\otimes X^{i-1}$
in $\mathfrak{g}\otimes_{\mathbb{K}}\mathbb{K}[X]$, $\forall x,y\in\mathfrak{g}$.
The pair $(\mathfrak{g},[p])$ is referred to as a restricted Lie
algebra.

\end{definition}

\begin{definition} (see \cite{Hoc}) A restricted Lie algebra $(\mathfrak{g},[p])$
is said to be strongly abelian if $[\mathfrak{g},\mathfrak{g}]_{\mathfrak{g}}=0$
and $\mathfrak{g}^{[p]}=0$.

\end{definition}

\begin{definition} (see \cite{SF}) Let $(\mathfrak{g},[p])$ be
a restricted Lie algebra. A subalgebra $\mathfrak{h}\subset\mathfrak{g}$
is called a $p$-subalgebra if $x^{[p]}\in\mathfrak{h}$, $\forall x\in\mathfrak{h}$.

\end{definition}

\begin{definition} (see \cite{SF}) Let $(\mathfrak{g},[p])$ be
a restricted Lie algebra in prime characteristic $p$. A derivation
$\mathscr{D}\in\mathrm{Der}(\mathfrak{g})$ is called restricted if

\[
\mathscr{D}(x^{[p]})=(\mathrm{ad}x)^{p-1}(\mathscr{D}(x)),\;\forall x\in\mathfrak{g}.
\]
\end{definition}

Denoted by $\mathrm{Der}^{p}(\mathfrak{g})$ the set of all restricted
derivations of $\mathfrak{g}$.

Note that if $p=2$, then a restricted derivation $\mathscr{D}\in\mathrm{Der}^{2}(\mathfrak{g})$
satisfies $\mathscr{D}(x^{[2]})=[x,\mathscr{D}(x)]_{\mathfrak{g}}$.

\begin{definition} (see \cite{SF}) Let $(\mathfrak{g},[p])$ be
a restricted Lie algebra and $V$ a vector space in prime characteristic
$p$. A restricted representation of $\mathfrak{g}$ in $V$ is a
homomorphism $\rho:\mathfrak{g}\rightarrow\mathfrak{gl}(V)$ with
$\rho(x^{[p]})=\rho(x)^{p}$, $\forall x\in\mathfrak{g}$. It is denoted
by $(V,\rho)$.

If $(V,\rho)$ is a restricted representation of $(\mathfrak{g},[p])$,
then $V$ is called a restricted $\mathfrak{g}$-module.

\end{definition}

\begin{remark} If $\rho=0$, then $(V,0)$ is called a trivial representation
of $(\mathfrak{g},[p])$ and $V$ is called a trivial $\mathfrak{g}$-module.

\end{remark}

\begin{definition} A ResLieDer pair in prime characteristic $p$
is a restricted Lie algebra $(\mathfrak{g},[p])$ with a restricted
derivation $\mathscr{D}\in\mathrm{Der}^{p}(\mathfrak{g})$. It is
denoted by $((\mathfrak{g},[p]),\mathscr{D})$.

\end{definition}

\begin{definition} Let $((\mathfrak{g},[p]),\mathscr{D})$ be a ResLieDer
pair.

(1) It is said to be abelian if the Lie bracket $[\cdot,\cdot]_{\mathfrak{g}}$
in $\mathfrak{g}$ is trivial, that is, $[\mathfrak{g},\mathfrak{g}]_{\mathfrak{g}}=0$;

(2) It is said to be strongly abelian if $[\mathfrak{g},\mathfrak{g}]_{\mathfrak{g}}=0$
and $\mathfrak{g}^{[p]}=0$.

\end{definition}

\begin{definition} Let $((\mathfrak{g},[p]_{\mathfrak{g}}),\mathscr{D}_{\mathfrak{g}})$
and $((\mathfrak{h},[p]_{\mathfrak{h}}),\mathscr{D}_{\mathfrak{h}})$
be ResLieDer pairs. A morphism $\pi$ from $((\mathfrak{g},[p]_{\mathfrak{g}}),\mathscr{D}_{\mathfrak{g}})$
to $((\mathfrak{h},[p]_{\mathfrak{h}}),\mathscr{D}_{\mathfrak{h}})$
is a Lie algebra morphism $\pi:\mathfrak{g}\rightarrow\mathfrak{h}$
such that
\[
\pi(x^{[p]_{\mathfrak{g}}})=\pi(x)^{[p]_{\mathfrak{h}}}\quad\mathrm{and}\quad\pi\text{\textopenbullet}\mathscr{D}_{\mathfrak{g}}(x)=\mathscr{D}_{\mathfrak{h}}\text{\textopenbullet}\pi(x),\quad\forall x\in\mathfrak{g}.
\]

\end{definition}

\begin{definition} A restricted representation of a ResLieDer pair
$((\mathfrak{g},[p]),\mathscr{D})$ on a vector space $V$ with respect
to $\eta_{_{V}}\in\mathrm{gl}(V)$ is a Lie algebra morphism $\rho:\mathfrak{g}\rightarrow\mathrm{gl}(V)$
such that
\[
\rho(x^{[p]})=\rho(x)^{p},\;\eta_{V}\text{\textopenbullet}\rho(x)=\rho(\mathscr{D}(x))+\rho(x)\text{\textopenbullet}\eta_{V},\;\forall x\in\mathfrak{g}.
\]
Denote by $(V,\rho,\eta_{V})$ a restricted representation.

\end{definition}

\begin{remark} (1) For all $x\in\mathfrak{g}$, we define $\mathrm{ad}x:\mathfrak{g}\rightarrow\mathfrak{g}$
by $\mathrm{ad}x(y)=[x,y]_{\mathfrak{g}}$, $\forall y\in\mathfrak{g}$.
It can be proved that $\mathrm{ad}$ is a restricted representation
of the ResLieDer pair $((\mathfrak{g},[p]),\mathscr{D})$ on $\mathfrak{g}$
with respect to $\mathscr{D}$. The triple $(\mathfrak{g},\mathrm{ad},\mathscr{D})$
is called an adjoint representation.

(2) If $\rho=0$, then the triple $(V,0,\eta_{V})$ is called a trivial
representation.

\end{remark}

\begin{definition} Let $(V,\rho,\eta_{V})$ and $(V',\rho',\eta_{V'})$
be two restricted representations of a ResLieDer pair $((\mathfrak{g},[p]),\mathscr{D})$.
A morphism from $(V,\rho,\eta_{V})$ to $(V',\rho',\eta_{V'})$ is
a morphism $f:V\rightarrow V'$ such that
\[
f\text{\textopenbullet}\eta_{V}=\eta_{V'}\text{\textopenbullet}f.
\]

\end{definition}

\begin{lemma} (see \cite{Hum}) Let $\mathfrak{g}$ be a Lie algebra
and $(\rho,V)$ a representation of $\mathfrak{g}$. Then the space
$\mathfrak{g}\oplus V$ becomes a Lie algebra with the bracket
\[
[x+u,y+v]_{\rho}=[x,y]_{\mathfrak{g}}+\rho(x)(v)-\rho(y)(u),\;\forall x,y\in\mathfrak{g},u,v\in V.
\]
Denote by $\mathfrak{g}\ltimes V$ this Lie algebra.

\end{lemma}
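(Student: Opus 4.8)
The plan is to verify directly the three defining properties of a Lie bracket for $[\cdot,\cdot]_{\rho}$ on the vector space $\mathfrak{g}\oplus V$: bilinearity, the alternating property, and the Jacobi identity. Bilinearity is immediate, since $[\cdot,\cdot]_{\mathfrak{g}}$ is bilinear, each operator $\rho(x)$ is linear on $V$, and $x\mapsto\rho(x)$ is linear. For the alternating property I would simply compute
\[
[x+u,x+u]_{\rho}=[x,x]_{\mathfrak{g}}+\rho(x)(u)-\rho(x)(u)=0;
\]
over $\mathbb{F}$ one may alternatively note $[x+u,y+v]_{\rho}=[y+v,x+u]_{\rho}$ because $[x,y]_{\mathfrak{g}}=[y,x]_{\mathfrak{g}}$ in characteristic $2$.

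The only part with any content is the Jacobi identity. Given $x+u,\,y+v,\,z+w\in\mathfrak{g}\oplus V$, I would expand the cyclic sum
\[
[[x+u,y+v]_{\rho},z+w]_{\rho}+[[y+v,z+w]_{\rho},x+u]_{\rho}+[[z+w,x+u]_{\rho},y+v]_{\rho}
\]
and split it into its $\mathfrak{g}$-component and its $V$-component. Because $[V,V]_{\rho}=0$ and $[\mathfrak{g},V]_{\rho}\subseteq V$, the $\mathfrak{g}$-component of the cyclic sum is exactly
\[
[[x,y]_{\mathfrak{g}},z]_{\mathfrak{g}}+[[y,z]_{\mathfrak{g}},x]_{\mathfrak{g}}+[[z,x]_{\mathfrak{g}},y]_{\mathfrak{g}},
\]
which vanishes by the Jacobi identity in $\mathfrak{g}$. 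For the $V$-component, collecting the terms produced by the inner and outer brackets and grouping them according to which of $u,v,w$ they act on, the coefficient of $u$ is
\[
\rho([y,z]_{\mathfrak{g}})-\rho(y)\rho(z)+\rho(z)\rho(y),
\]
and the coefficients of $v$ and $w$ are obtained from this by cyclic permutation. Each of these vanishes precisely because $\rho$ is a Lie algebra homomorphism, i.e.\ $\rho([y,z]_{\mathfrak{g}})=\rho(y)\rho(z)-\rho(z)\rho(y)$; hence the $V$-component is zero and the Jacobi identity holds.

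There is no genuine obstacle here: the statement is classical, as the citation to \cite{Hum} signals, and the only care required is in the bookkeeping of the nine terms in the Jacobi cyclic sum and the consistent use of the homomorphism identity $\rho([x,y]_{\mathfrak{g}})=\rho(x)\rho(y)-\rho(y)\rho(x)$. Once the bracket is shown to be a Lie bracket, the notation $\mathfrak{g}\ltimes V$ is merely a name for the Lie algebra $(\mathfrak{g}\oplus V,[\cdot,\cdot]_{\rho})$, and this lemma will later serve as the template for the semidirect-product construction at the level of ResLieDer pairs in Proposition 2.14.
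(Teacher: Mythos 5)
Your proposal is correct, and the verification (bilinearity, alternation, and the Jacobi identity via the homomorphism identity $\rho([x,y]_{\mathfrak{g}})=\rho(x)\rho(y)-\rho(y)\rho(x)$, with the $V$-component sorted by the coefficients of $u$, $v$, $w$) is the standard textbook argument. The paper itself gives no proof of this lemma --- it is quoted directly from \cite{Hum} --- so there is nothing to compare against; your computation fills in exactly the expected details.
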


\begin{proposition} Given a restricted representation $(V,\rho,\eta_{V})$
of a ResLieDer pair $((\mathfrak{g},[2]),\mathscr{D})$ over $\mathbb{F}$,
define $[2]_{\rho}:\mathfrak{g}\oplus V\rightarrow\mathfrak{g}\oplus V$
by
\[
(x+u)^{[2]_{\rho}}=x^{[2]}+\rho(x)(u)
\]
and $\mathscr{D}+\eta_{V}:\mathfrak{g}\oplus V\rightarrow\mathfrak{g}\oplus V$
as follows
\[
(\mathscr{D}+\eta_{V})(x+u)=\mathscr{D}(x)+\eta_{V}(u),\;\forall x\in\mathfrak{g},u\in V.
\]

Then $((\mathfrak{g}\oplus V,[2]_{\rho}),\mathscr{D}+\eta_{V})$ is
a ResLieDer pair with the Lie structure in the above lemma. It is
called the semidirect product of the ResLieDer pair $((\mathfrak{g},[2]),\mathscr{D})$
by the restricted representation $(V,\rho,\eta_{V})$ and denoted
by $\mathfrak{g}\ltimes_{\mathrm{ResLD}}V$.

\end{proposition}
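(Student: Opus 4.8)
The plan is to verify three things in turn: that $[2]_{\rho}$ is a $2$-mapping making $(\mathfrak{g}\ltimes V,[2]_{\rho})$ a restricted Lie algebra, that $\mathscr{D}+\eta_{V}$ is a derivation of the Lie algebra $\mathfrak{g}\ltimes V$, and finally that $\mathscr{D}+\eta_{V}$ is a \emph{restricted} derivation with respect to $[2]_{\rho}$. The underlying Lie algebra structure on $\mathfrak{g}\ltimes V$ is already supplied by the preceding lemma, so the bracket identities are free; everything else is a direct expansion using the defining relations of a restricted representation of a ResLieDer pair, namely $\rho(x^{[2]})=\rho(x)^{2}$, the Lie morphism property $\rho([x,y]_{\mathfrak{g}})=[\rho(x),\rho(y)]$, and the intertwining relation $\eta_{V}\circ\rho(x)=\rho(\mathscr{D}(x))+\rho(x)\circ\eta_{V}$, together with the facts that $\mathscr{D}$ is a derivation of $\mathfrak{g}$ and satisfies $\mathscr{D}(x^{[2]})=[x,\mathscr{D}(x)]_{\mathfrak{g}}$.

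For the $2$-mapping axioms I would first dispose of homogeneity, $(k(x+u))^{[2]_{\rho}}=k^{2}(x+u)^{[2]_{\rho}}$, which is immediate from linearity of $\rho$ and from $[2]$ being a $2$-mapping on $\mathfrak{g}$. Next the additivity axiom: in characteristic $2$ the single nontrivial Jacobson coefficient is $s_{1}(a,b)=[a,b]$, so I must check $((x+u)+(y+v))^{[2]_{\rho}}=(x+u)^{[2]_{\rho}}+(y+v)^{[2]_{\rho}}+[x+u,y+v]_{\rho}$; expanding both sides with the definitions of $[2]_{\rho}$ and $[\cdot,\cdot]_{\rho}$ and using $(x+y)^{[2]}=x^{[2]}+y^{[2]}+[x,y]_{\mathfrak{g}}$, the two sides agree because the term $\rho(y)(u)$ on one side equals the term $-\rho(y)(u)$ on the other. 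Finally the hardest of the three, $\mathrm{ad}_{\rho}\bigl((x+u)^{[2]_{\rho}}\bigr)=\bigl(\mathrm{ad}_{\rho}(x+u)\bigr)^{2}$: evaluating both operators on an arbitrary $y+v$, the left side yields $(\mathrm{ad}x)^{2}(y)+\rho(x)^{2}(v)-\rho(y)\rho(x)(u)$ after using $\mathrm{ad}x^{[2]}=(\mathrm{ad}x)^{2}$ and $\rho(x^{[2]})=\rho(x)^{2}$, while the right side gives the same expression with $+\rho(y)\rho(x)(u)$, the cross terms $\rho(x)\rho(y)(u)$ cancelling in pairs modulo $2$; equality then follows from $-1=1$.

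For the derivation property I would expand $(\mathscr{D}+\eta_{V})([x+u,y+v]_{\rho})$ and $[(\mathscr{D}+\eta_{V})(x+u),y+v]_{\rho}+[x+u,(\mathscr{D}+\eta_{V})(y+v)]_{\rho}$; the $\mathfrak{g}$-components match because $\mathscr{D}$ is a derivation of $\mathfrak{g}$, and the $V$-components match term by term by applying the intertwining relation $\eta_{V}\circ\rho(x)=\rho(\mathscr{D}(x))+\rho(x)\circ\eta_{V}$ to the inputs $v$ and $u$ separately. For the restricted-derivation condition I would compute $(\mathscr{D}+\eta_{V})\bigl((x+u)^{[2]_{\rho}}\bigr)=\mathscr{D}(x^{[2]})+\eta_{V}(\rho(x)(u))$, rewrite $\mathscr{D}(x^{[2]})=[x,\mathscr{D}(x)]_{\mathfrak{g}}$ and $\eta_{V}(\rho(x)(u))=\rho(\mathscr{D}(x))(u)+\rho(x)(\eta_{V}(u))$ via the intertwining relation, and then recognize the result as $[x+u,\mathscr{D}(x)+\eta_{V}(u)]_{\rho}=[x+u,(\mathscr{D}+\eta_{V})(x+u)]_{\rho}$, again absorbing a sign on $\rho(\mathscr{D}(x))(u)$ using $-1=1$.

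The computation is essentially bookkeeping, and the only genuine subtlety is the systematic use of characteristic $2$: the identifications $2=0$ and $-1=1$, and the explicit value $s_{1}(a,b)=[a,b]$, are exactly what make the $\rho$-twisted terms line up — in particular the $\mathrm{ad}$-compatibility axiom and the restricted-derivation axiom would fail with these naive formulas in odd characteristic. So the main obstacle is simply to keep the sign conventions straight and to invoke each of the representation axioms $\rho(x^{[2]})=\rho(x)^{2}$, $\rho([x,y]_{\mathfrak{g}})=[\rho(x),\rho(y)]$ and $\eta_{V}\circ\rho(x)=\rho(\mathscr{D}(x))+\rho(x)\circ\eta_{V}$ at the right moment.
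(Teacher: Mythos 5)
Your proposal is correct and follows essentially the same route as the paper's proof: verify the three axioms of a $2$-mapping for $[2]_{\rho}$ (homogeneity, the additivity law with $s_{1}(x,y)=[x,y]_{\mathfrak{g}}$, and the $\mathrm{ad}$-compatibility, the last using $\rho([x,y]_{\mathfrak{g}})=[\rho(x),\rho(y)]$ and the characteristic-$2$ cancellation), then check that $\mathscr{D}+\eta_{V}$ is a derivation via the intertwining relation, and finally that it is restricted via $\mathscr{D}(x^{[2]})=[x,\mathscr{D}(x)]_{\mathfrak{g}}$ and $\eta_{V}\circ\rho(x)=\rho(\mathscr{D}(x))+\rho(x)\circ\eta_{V}$. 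The paper phrases the $\mathrm{ad}$-condition as $[(x+u)^{[2]_{\rho}},y+v]_{\rho}=[x+u,[x+u,y+v]_{\rho}]_{\rho}$, which is exactly your operator identity evaluated on $y+v$, so there is no substantive difference.
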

\begin{proof}
First, it can be proved that $[2]_{\rho}$ is a 2-mapping on $\mathfrak{g}\oplus V$.
Indeed, for any $x,y\in\mathfrak{g}$ and $u,v\in V$, it follows
from
\[
[(x+u)^{[2]_{\rho}},y+v]_{\rho}=[x^{[2]}+\rho(x)(u),y+v]_{\rho}=[x^{[2]},y]_{\mathfrak{g}}+\rho(x^{[2]})(v)+\rho(y)(\rho(x)(u))
\]
and
\begin{align*}
 & [x+u,[x+u,y+v]_{\rho}]_{\rho}\\
= & [x+u,[x,y]_{\mathfrak{g}}+\rho(x)(v)+\rho(y)(u)]_{\rho}\\
= & [x,[x,y]_{\mathfrak{g}}]_{\mathfrak{g}}+\rho(x)^{2}(v)+\rho(x)(\rho(y)(u))+\rho([x,y]_{\mathfrak{g}})(u)\\
= & [x^{[2]},y]_{\mathfrak{g}}+\rho(x^{[2]})(v)+\rho(x)(\rho(y)(u))+\rho(x)(\rho(y)(u))+\rho(y)(\rho(x)(u))\\
= & [x^{[2]},y]_{\mathfrak{g}}+\rho(x^{[2]})(v)+\rho(y)(\rho(x)(u))
\end{align*}
that $[(x+u)^{[2]_{\rho}},y+v]_{\rho}=[x+u,[x+u,y+v]_{\rho}]_{\rho}$.

Moreover, we have
\begin{align*}
((x+u)+(y+v))^{[2]_{\rho}} & =((x+y)+(u+v))^{[2]_{\rho}}=(x+y)^{[2]}+\rho(x+y)(u+v)\\
 & =x^{[2]}+y^{[2]}+[x,y]_{\mathfrak{g}}+\rho(x)(u)+\rho(x)(v)+\rho(y)(u)+\rho(y)(v)\\
 & =(x^{[2]}+\rho(x)(u))+(y^{[2]}+\rho(y)(v))+([x,y]_{\mathfrak{g}}+\rho(x)(v)+\rho(y)(u))\\
 & =(x+u)^{[2]_{\rho}}+(y+v)^{[2]_{\rho}}+[x+u,y+v]_{\rho}.
\end{align*}

For any $x\in\mathfrak{g}$, $u\in V$ and $a\in\mathbb{F}$, we
\[
(a(x+u))^{[2]_{\rho}}=(ax+au)^{[2]_{\rho}}=(ax)^{[2]}+\rho(ax)(au)=a^{2}(x^{[2]}+\rho(x)(u))=a^{2}(x+u)^{[2]_{\rho}}.
\]
Therefore, $[2]_{\rho}$ is a 2-mapping on $\mathfrak{g}\oplus V$.

It remains to prove that $\mathscr{D}+\eta_{V}$ is a restricted derivation.
Indeed, for any $x,y\in\mathfrak{g},u,v\in V$, we have
\begin{align*}
 & (\mathscr{D}+\eta_{V})([x+u,y+v]_{\rho})\\
= & (\mathscr{D}+\eta_{V})([x,y]_{\mathfrak{g}}+\rho(x)(v)+\rho(y)(u))=\mathscr{D}([x,y]_{\mathfrak{g}})+\eta_{V}(\rho(x)(v)+\rho(y)(u))\\
= & [\mathscr{D}(x),y]_{\mathfrak{g}}+[x,\mathscr{D}(y)]_{\mathfrak{g}}+(\rho(\mathscr{D}(x))(v)+\rho(x)\text{\textopenbullet}\eta_{V}(v))+(\rho(\mathscr{D}(y))(u)+\rho(y)\text{\textopenbullet}\eta_{V}(u))\\
= & ([\mathscr{D}(x),y]_{\mathfrak{g}}+\rho(\mathscr{D}(x))(v)+\rho(y)\text{\textopenbullet}\eta_{V}(u))+([x,\mathscr{D}(y)]_{\mathfrak{g}}+\rho(\mathscr{D}(y))(u)+\rho(x)\text{\textopenbullet}\eta_{V}(v))\\
= & [(\mathscr{D}+\eta_{V})(x+u),y+v]_{\rho}+[x+u,(\mathscr{D}+\eta_{V})(y+v)]_{\rho},
\end{align*}
which implies that $\mathscr{D}+\eta_{V}$ is a derivation. Since
\[
(\mathscr{D}+\eta_{V})((x+u)^{[2]_{\rho}})=(\mathscr{D}+\eta_{V})(x{}^{[2]}+\rho(x)(u))=\mathscr{D}(x^{[2]})+\eta_{V}(\rho(x)(u))
\]
and
\begin{align*}
 & [x+u,(\mathscr{D}+\eta_{V})(x+u)]_{\rho}\\
= & [x+u,\mathscr{D}(x)+\eta_{V}(u)]_{\rho}=[x,\mathscr{D}(x)]_{\mathfrak{g}}+\rho(x)(\eta_{V}(u))+\rho(\mathscr{D}(x))(u)\\
= & \mathscr{D}(x^{[2]})+\eta_{V}(\rho(x)(u)),
\end{align*}
we have $(\mathscr{D}+\eta_{V})((x+u)^{[2]_{\rho}})=[x+u,(\mathscr{D}+\eta_{V})(x+u)]_{\rho}$,
which implies that the derivation $\mathscr{D}+\eta_{V}$ is restricted.
The proof is complete.
\end{proof}

\section{Cohomology of a ResLieDer pair}

Let's first recall the cohomology of a restricted Lie algebra $(\mathfrak{g},[2])$
with coefficients in the restricted $\mathfrak{g}$-module $(V,\rho)$
in characteristic 2 \cite{EM}.

For $n\geq0$, denote by $C_{*2}^{n}(\mathfrak{g};V)$ the space of
$n$-cochains on $\mathfrak{g}$ with values in $V$. Then $C_{*2}^{0}(\mathfrak{g};V)=V$,
$C_{*2}^{1}(\mathfrak{g};V)=\mathrm{Hom}(\mathfrak{g};V)$ and
\begin{align*}
C_{*2}^{2}(\mathfrak{g};V)= & \{(\varphi_{2},\omega_{2})|\varphi_{2}:\land^{2}\mathfrak{g}\rightarrow M\mathrm{\;is\;a\;bilinear\;map,\omega_{2}:\mathfrak{g}\rightarrow M\;\mathrm{satisfying}}\\
 & \;\omega_{2}(ax)=a^{2}\omega_{2}(x)\;\mathrm{and}\;\omega_{2}(x+y)=\omega_{2}(x)+\omega_{2}(y)+\varphi_{2}(x,y),\\
 & \;\mathrm{for\;all}\;x,y\in\mathfrak{g}\;\mathrm{and}\;\mathrm{all}\;a\in\mathbb{F}\}.
\end{align*}

Moreover, for $n\geq3$, a $n$-cochain of the cohomology is a pair
$(\varphi_{n},\omega_{n})$ consisting of a $n$-linear map $\varphi_{n}:\land^{n}\mathfrak{g}\rightarrow V$
and a map $\omega_{n}:\land^{n-1}\mathfrak{g}\rightarrow V$, which
satisfies the following conditions (for all $x,y,z_{i},z_{i'}\in\mathfrak{g},2\leq i\leq n-1$
and all $a,b\in\mathbb{F}$):

\[
\omega_{n}(ax,z_{2},\cdot\cdot\cdot,z_{n-1})=a^{2}\omega_{n}(x,z_{2},\cdot\cdot\cdot,z_{n-1});
\]
\[
\omega_{n}(x,z_{2},\cdot\cdot\cdot,az_{i}+bz_{i'},\cdot\cdot\cdot,z_{n-1})=a\omega_{n}(x,z_{2},\cdot\cdot\cdot,z_{i},\cdot\cdot\cdot,z_{n-1})+b\omega_{n}(x,z_{2},\cdot\cdot\cdot,z_{i'},\cdot\cdot\cdot,z_{n-1});
\]
\[
\omega_{n}(x+y,z_{2},\cdot\cdot\cdot,z_{n-1})=\omega_{n}(x,z_{2},\cdot\cdot\cdot,z_{n-1})+\omega_{n}(y,z_{2},\cdot\cdot\cdot,z_{n-1})+\varphi_{n}(x,y,z_{2},\cdot\cdot\cdot,z_{n-1}).
\]

The coboundary operator $\partial^{0}:C_{*2}^{0}(\mathfrak{g};V)\rightarrow C_{*2}^{1}(\mathfrak{g};V)$
is defined by $\partial^{0}:v\mapsto d^{0}v$, where $d^{0}v(x)=\rho(x)(v),$
for any $x\in\mathfrak{g}$. The map $\partial^{1}:C_{*2}^{1}(\mathfrak{g};V)\rightarrow C_{*2}^{2}(\mathfrak{g};V)$
is given by $\partial^{1}:\varphi_{1}\mapsto(d^{1}\varphi_{1},\omega_{\varphi_{1}}),$where
\begin{align*}
d^{1}\varphi_{1}(x,y) & =\varphi_{1}([x,y]_{\mathfrak{g}})+\rho(x)(\varphi_{1}(y))+\rho(y)(\varphi_{1}(x)),\;\forall x,y\in\mathfrak{g},\\
\omega_{\varphi_{1}}(x) & =\varphi_{1}(x^{[2]})+\rho(x)(\varphi_{1}(x)),\;\forall x\in\mathfrak{g}.
\end{align*}

\begin{remark} The notation $\omega_{\varphi_{1}}$ represents a
map defined by the linear map $\varphi_{1}:\mathfrak{g}\rightarrow V$
as above such that $(d^{1}\varphi_{1},\omega_{\varphi_{1}})$ becomes
a 2-cochain. If $\omega$ is replaced by another letter, for example
$\varsigma$, then $\varsigma_{\varphi_{1}}(x)=\omega_{\varphi_{1}}(x)$,
$\forall x\in\mathfrak{g}$.

\end{remark}

For $n\geq2$, the coboundary operator $\partial^{n}:C_{*2}^{n}(\mathfrak{g};V)\rightarrow C_{*2}^{n+1}(\mathfrak{g};V)$
is given by $\partial^{n}:(\varphi_{n},\omega_{n})\mapsto(d^{n}\varphi_{n},d^{n}\omega_{n}),$where

\begin{align*}
d^{n}\varphi_{n}(z_{1},\cdot\cdot\cdot,z_{n+1})= & \sum_{1\leq i\leq n+1}\rho(z_{i})(\varphi_{n}(z_{1},\cdot\cdot\cdot,\hat{z_{i}},\cdot\cdot\cdot,z_{n+1}))\\
 & +\sum_{1\leq i<j\leq n+1}\varphi_{n}([z_{i},z_{j}]_{\mathfrak{g}},z_{1},\cdot\cdot\cdot,\hat{z_{i}},\cdot\cdot\cdot,\hat{z_{j}},\cdot\cdot\cdot,z_{n+1}),
\end{align*}

\begin{align*}
d^{n}\omega_{n}(x,z_{1},\cdot\cdot\cdot,z_{n-1})= & \rho(x)(\varphi_{n}(x,z_{1},\cdot\cdot\cdot,z_{n-1}))+\varphi_{n}(x^{[2]_{\alpha}},z_{1},\cdot\cdot\cdot,z_{n-1})\\
 & +\sum_{1\leq i\leq n-1}\varphi_{n}([x,z_{i}]_{\mathfrak{g}},x,z_{1},\cdot\cdot\cdot,\hat{z_{i}},\cdot\cdot\cdot,z_{n-1})\\
 & +\sum_{1\leq i\leq n-1}\rho(z_{i})(\omega_{n}(x,z_{1},\cdot\cdot\cdot,\hat{z_{i}},\cdot\cdot\cdot,z_{n-1}))\\
 & +\sum_{1\leq i<j\leq n-1}\omega_{n}(x,[z_{i},z_{j}]_{\mathfrak{g}},z_{1},\cdot\cdot\cdot,\hat{z_{i}},\cdot\cdot\cdot,\hat{z_{j}},\cdot\cdot\cdot,z_{n-1}).
\end{align*}

Denote by $Z_{*2}^{n}(\mathfrak{g};V)$ the set of $n$-cocycles and
$B_{*2}^{n}(\mathfrak{g};V)$ the set of $n$-coboundaries. And the
corresponding cohomology group is denoted by $H_{*2}^{n}(\mathfrak{g};V)=Z_{*2}^{n}(\mathfrak{g};V)/B_{*2}^{n}(\mathfrak{g};V)$.

In the following, the cohomology of a ResLieDer pair $((\mathfrak{g},[2]),\mathscr{D})$
with values in its restricted representation $(V,\rho,\eta_{V})$
will be given.

For $n\geq0$, denote by $C_{\mathrm{ResLD}}^{n}(\mathfrak{g};V)$
the space of ResLieDer pair $n$-cochains on $((\mathfrak{g},[2]),\mathscr{D})$
with values in $V$. Let $C_{\mathrm{ResLD}}^{0}(\mathfrak{g};V)=0$
and $C_{\mathrm{ResLD}}^{1}(\mathfrak{g};V)=\mathrm{Hom}(\mathfrak{g};V)$.
For $n\geq2$, the space of ResLieDer pair $n$-cochains is defined
by
\[
C_{\mathrm{ResLD}}^{n}(\mathfrak{g};V):=C_{*2}^{n}(\mathfrak{g};V)\times C_{*2}^{n-1}(\mathfrak{g};V).
\]

For $n\geq1$, we define an operator $\delta:C_{*2}^{n}(\mathfrak{g};V)\rightarrow C_{*2}^{n}(\mathfrak{g};V)$.
Details are as follows: The operator $\delta:C_{*2}^{1}(\mathfrak{g};V)\rightarrow C_{*2}^{1}(\mathfrak{g};V)$
is given by
\[
(\delta\varphi_{1})(x)=\eta_{V}\text{\textopenbullet}\varphi_{1}(x)+\varphi_{1}(\mathscr{D}(x)).
\]
The operator $\delta:C_{*2}^{2}(\mathfrak{g};V)\rightarrow C_{*2}^{2}(\mathfrak{g};V)$
is given by
\begin{align*}
(\delta\varphi_{2})(x,y) & =\eta_{V}\text{\textopenbullet}\varphi_{2}(x,y)+\varphi_{2}(\mathscr{D}(x),y)+\varphi_{2}(x,\mathscr{D}(y)),\\
(\delta\omega_{2})(x) & =\eta_{V}\text{\textopenbullet}\omega_{2}(x)+\varphi_{2}(x,\mathscr{D}(x)).
\end{align*}
And the operator $\delta:C_{*2}^{n}(\mathfrak{g};V)\rightarrow C_{*2}^{n}(\mathfrak{g};V)$,
$n\geq3$ is given by
\[
(\delta\varphi_{n})(z_{1},z_{2},\cdots,z_{n})=\eta_{V}\text{\textopenbullet}\varphi_{n}(z_{1},z_{2},\cdots,z_{n})+\sum_{i=1}^{n}\varphi_{n}(z_{1},\cdots,\mathscr{D}(z_{i}),\cdots,z_{n}),
\]

\begin{align*}
(\delta\omega_{n})(x,z_{2},\cdots,z_{n-1})= & \eta_{V}\text{\textopenbullet}\omega_{n}(x,z_{2},\cdots,z_{n-1})+\sum_{i=2}^{n-1}\omega_{n}(x,z_{2},\cdots,\mathscr{D}(z_{i}),\cdots,z_{n-1})\\
 & +\varphi_{n}(x,\mathscr{D}(x),z_{2},\cdots,z_{n-1}).
\end{align*}
Furthermore, define $\vartheta^{1}:C_{\mathrm{ResLD}}^{1}(\mathfrak{g};V)\rightarrow C_{\mathrm{ResLD}}^{2}(\mathfrak{g};V)$
by

\[
\vartheta^{1}\varphi_{1}=((d^{1}\varphi_{1},\omega_{\varphi_{1}}),\delta\varphi_{1}),\;\forall\varphi_{1}\in\mathrm{Hom}(\mathfrak{g};V)
\]
and $\vartheta^{2}:C_{\mathrm{ResLD}}^{2}(\mathfrak{g};V)\rightarrow C_{\mathrm{ResLD}}^{3}(\mathfrak{g};V)$
by

\[
\vartheta^{2}((\varphi_{2},\omega_{2}),\varphi_{1})=((d^{2}\varphi_{2},d^{2}\omega_{2}),(d^{1}\varphi_{1}+\delta\varphi_{2},\omega_{\varphi_{1}}+\delta\omega_{2})),
\]
for any $(\varphi_{2},\omega_{2})\in C_{*2}^{2}(\mathfrak{g};V),\varphi_{1}\in\mathrm{Hom}(\mathfrak{g};V)$.
For $n\geq3$, the map $\vartheta^{n}:C_{\mathrm{ResLD}}^{n}(\mathfrak{g};V)\rightarrow C_{\mathrm{ResLD}}^{n+1}(\mathfrak{g};V)$
is defined by

\[
\vartheta^{n}((\varphi_{n},\omega_{n}),(\varphi_{n-1},\omega_{n-1}))=((d^{n}\varphi_{n},d^{n}\omega_{n}),(d^{n-1}\varphi_{n-1}+\delta\varphi_{n},d^{n-1}\omega_{n-1}+\delta\omega_{n})),
\]
for any $(\varphi_{n},\omega_{n})\in C_{*2}^{n}(\mathfrak{g};V)$,
$(\varphi_{n-1},\omega_{n-1})\in C_{*2}^{n-1}(\mathfrak{g};V)$.

\begin{lemma} For $n\geq1$, the map $\partial^{n}$ and $\delta$
are commutative with each other, that is, $\partial^{n}\text{\textopenbullet}\delta=\delta\text{\textopenbullet}\partial^{n}$.
More precisely, we have $\delta(d^{n}\varphi_{n})=d^{n}(\delta\varphi_{n})$
and $\delta\omega_{\varphi_{1}}=\omega_{\delta\varphi_{1}}$, $\delta(d^{k}\omega_{k})=d^{k}(\delta\omega_{k})$,
for $k\geq3$.

\end{lemma}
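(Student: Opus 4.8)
The plan is to verify each of the listed identities by a direct expansion, the only inputs being the three structural compatibilities of a ResLieDer pair together with its restricted representation: that $\mathscr{D}$ is a derivation of $[\cdot,\cdot]_{\mathfrak{g}}$, that $\mathscr{D}(x^{[2]})=[x,\mathscr{D}(x)]_{\mathfrak{g}}$, and the twisting relation $\eta_{V}\circ\rho(x)=\rho(\mathscr{D}(x))+\rho(x)\circ\eta_{V}$ (the representation condition $\rho(x^{[2]})=\rho(x)^{2}$ in fact plays no role in this lemma). Conceptually $\delta$ is the endomorphism of the cochain complex $C_{*2}^{\bullet}(\mathfrak{g};V)$ induced by $(\mathscr{D},\eta_{V})$ regarded as an infinitesimal automorphism of $((\mathfrak{g},[2]),(V,\rho))$ --- a ``Lie derivative along $\mathscr{D}$'' --- and the assertion is the degenerate Cartan-type fact that such a Lie derivative commutes with the restricted coboundary; because of the characteristic-$2$ $\omega$-components I would carry the check out by hand, degree by degree.

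I would start with degree one. For $\delta(d^{1}\varphi_{1})=d^{1}(\delta\varphi_{1})$, evaluate both sides on $(x,y)$: on the right the term $(\delta\varphi_{1})([x,y]_{\mathfrak{g}})$ contributes $\varphi_{1}(\mathscr{D}[x,y]_{\mathfrak{g}})=\varphi_{1}([\mathscr{D}(x),y]_{\mathfrak{g}})+\varphi_{1}([x,\mathscr{D}(y)]_{\mathfrak{g}})$ by the derivation property, which matches the bracket terms produced on the left by $d^{1}\varphi_{1}$ evaluated at $(\mathscr{D}(x),y)$ and $(x,\mathscr{D}(y))$; the remaining $\rho$-terms are reconciled by rewriting each $\eta_{V}\circ\rho(z)$ via the twisting relation, after which every ``$\rho(\mathscr{D}(z))(\varphi_{1}(\cdot))$'' contribution occurs twice and cancels in characteristic $2$. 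The identity $\delta\omega_{\varphi_{1}}=\omega_{\delta\varphi_{1}}$ is handled the same way, with the one extra ingredient $\mathscr{D}(x^{[2]})=[x,\mathscr{D}(x)]_{\mathfrak{g}}$, which turns the term $\varphi_{1}(\mathscr{D}(x^{[2]}))$ arising from $(\delta\varphi_{1})(x^{[2]})$ into $\varphi_{1}([x,\mathscr{D}(x)]_{\mathfrak{g}})$ --- exactly the bracket term produced on the other side by $d^{1}\varphi_{1}(x,\mathscr{D}(x))$.

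Next I would treat degree $n\geq 2$, which splits into the $\varphi_{n}$-component $\delta(d^{n}\varphi_{n})=d^{n}(\delta\varphi_{n})$ and the $\omega_{n}$-component $\delta(d^{n}\omega_{n})=d^{n}(\delta\omega_{n})$ (the degree-$2$ instance of the latter being covered by the same argument although it is not listed separately). The $\varphi_{n}$-component is the degree-one pattern verbatim: in the sum $\sum_{i<j}\varphi_{n}([z_{i},z_{j}]_{\mathfrak{g}},\dots)$, applying $\delta$ yields $\varphi_{n}(\mathscr{D}[z_{i},z_{j}]_{\mathfrak{g}},\dots)$, which splits by the derivation property into two terms absorbed into the two summands of $d^{n}(\delta\varphi_{n})$ carrying $\mathscr{D}$ in the $i$-th, resp. $j$-th, slot, while in the $\rho$-sum the twisting relation applied to each $\eta_{V}\circ\rho(z_{i})$ produces $\rho(\mathscr{D}(z_{i}))$-terms that double to $0$. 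The $\omega_{n}$-component is the bookkeeping-heaviest step: one additionally invokes $\mathscr{D}(x^{[2]})=[x,\mathscr{D}(x)]_{\mathfrak{g}}$ to match the term $\varphi_{n}(\mathscr{D}(x^{[2]}),z_{1},\dots,z_{n-1})$ coming from $(\delta\varphi_{n})(x^{[2]},\dots)$ with a bracket term on the other side, and one must track the distinguished (non-multilinear) first slot $x$ carefully: the summand $\varphi_{n}(x,\mathscr{D}(x),z_{1},\dots)$ built into $\delta$ on an $(n+1)$-cochain is precisely what reconciles the way $d^{n}$ treats $x$ in the terms $\rho(x)(\varphi_{n}(x,\dots))$, $\varphi_{n}(x^{[2]},\dots)$ and $\sum_{i}\varphi_{n}([x,z_{i}]_{\mathfrak{g}},x,\dots)$, once more after one use of the twisting relation on $\eta_{V}\circ\rho(x)$.

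The real obstacle is organizational rather than conceptual: in the $\omega_{n}$-component, $d^{n}\omega_{n}$ has five families of summands, and applying $\delta$ to each of them (and, on the other side, $d^{n}$ to the already $\delta$-twisted cochain) with the necessary reindexing produces on the order of a dozen families of terms whose pairwise matching goes entirely through the three identities above and characteristic-$2$ cancellations. The two points that need attention are (i) the non-multilinear first slot $x$ of $\omega$ --- all of its $\mathscr{D}$-derivatives must arrive through the single $\varphi_{n}(x,\mathscr{D}(x),\dots)$ summand of $\delta$, never through a ``sum over that slot'' --- and (ii) verifying that the sign-free Jacobi-type cancellations already underlying $\partial^{n+1}\circ\partial^{n}=0$ are preserved under the extra $\mathscr{D}$-differentiation. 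Once degree one is written out in full the higher degrees are routine adaptations, and the stated commutativity $\partial^{n}\circ\delta=\delta\circ\partial^{n}$ on $C_{*2}^{n}(\mathfrak{g};V)$ follows by assembling the $\varphi$- and $\omega$-parts.
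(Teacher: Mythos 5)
Your proposal is correct and follows essentially the same route as the paper: a degree-by-degree direct expansion in which the $\varphi$-components commute routinely, while the $\omega$-components are reconciled using exactly the three identities you isolate (the derivation property, $\mathscr{D}(x^{[2]})=[x,\mathscr{D}(x)]_{\mathfrak{g}}$, and $\eta_{V}\circ\rho(x)=\rho(\mathscr{D}(x))+\rho(x)\circ\eta_{V}$) together with characteristic-$2$ cancellation of the doubled $\rho(\mathscr{D}(\cdot))$ terms. Your level of detail on the term-matching is in fact comparable to (and in places more explicit than) the paper's own argument, which also leaves the $k\geq 3$ case as an asserted expansion.
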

\begin{proof}
It suffices to prove that $\delta\omega_{\varphi_{1}}=\omega_{\delta\varphi_{1}}$
and $\delta(d^{k}\omega_{k})=d^{k}(\delta\omega_{k})$, for $k\geq3$.

Since $\eta_{V}\text{\textopenbullet}\rho(x)=\rho(\mathscr{D}(x))+\rho(x)\circ\eta_{V}$
and $\mathscr{D}(x^{[2]})=[x,\mathscr{D}(x)]_{\mathfrak{g}}$, $\forall x\in\mathfrak{g}$,
we have
\begin{align*}
\omega_{\delta\varphi_{1}}(x) & =(\delta\varphi_{1})(x^{[2]})+\rho(x)((\delta\varphi_{1})(x))\\
 & =\eta_{V}\circ\varphi_{1}(x^{[2]})+\varphi_{1}(\mathscr{D}(x^{[2]}))+\rho(x)(\eta_{V}(\varphi_{1}(x)))+\rho(x)(\varphi_{1}(\mathscr{D}(x)))
\end{align*}
and
\begin{align*}
 & \delta\omega_{\varphi_{1}}(x)\\
= & \eta_{V}\circ\omega_{\varphi_{1}}(x)+d^{1}\varphi_{1}(x,\mathscr{D}(x))\\
= & \eta_{V}\circ\varphi_{1}(x^{[2]})+\eta_{V}(\rho(x)(\varphi_{1}(x)))+\rho(x)(\varphi_{1}(\mathscr{D}(x)))+\rho(\mathscr{D}(x))(\varphi_{1}(x))+\varphi_{1}([x,\mathscr{D}(x)]_{\mathfrak{g}})\\
= & \eta_{V}\circ\varphi_{1}(x^{[2]})+\varphi_{1}(\mathscr{D}(x^{[2]}))+(\eta_{V}(\rho(x)(\varphi_{1}(x)))+\rho(\mathscr{D}(x))(\varphi_{1}(x)))+\rho(x)(\varphi_{1}(\mathscr{D}(x)))\\
= & \eta_{V}\circ\varphi_{1}(x^{[2]})+\varphi_{1}(\mathscr{D}(x^{[2]}))+\rho(x)(\eta_{V}(\varphi_{1}(x)))+\rho(x)(\varphi_{1}(\mathscr{D}(x))).
\end{align*}
It follows that $\delta\omega_{\varphi_{1}}=\omega_{\delta\varphi_{1}}$.

Moreover, for any $x,z_{2},\cdots z_{k}\in\mathfrak{g}$, we have
\begin{align}
 & d^{k}(\delta\omega_{k})(x,z_{2},\cdots,z_{k})\nonumber \\
= & \rho(x)(\delta\varphi_{k}(x,z_{2},\cdots,z_{k}))+\delta\varphi_{k}(x^{[2]},z_{2},\cdots,z_{k})\\
 & +\sum_{i=2}^{k}\delta\varphi_{k}([x,z_{i}]_{\mathfrak{g}},x,z_{2},\cdots,\hat{z}_{i},\cdots,z_{k})+\sum_{i=2}^{k}\rho(z_{i})(\delta\omega_{k}(x,z_{2},\cdots,\hat{z}_{i},\cdots,z_{k}))\nonumber \\
 & +\sum_{2\leq i<j\leq k}\delta\omega_{k}(x,[z_{i},z_{j}]_{\mathfrak{g}},z_{2},\cdots,\hat{z}_{i},\cdots,\hat{z}_{j},\cdots,z_{k})\nonumber
\end{align}
and
\begin{align}
 & \delta(d^{k}\omega_{k})(x,z_{2},\cdots,z_{k})\nonumber \\
= & \eta_{V}\circ(d^{k}\omega_{k})(x,z_{2},\cdots,z_{k})+\sum_{i=2}^{k}(d^{k}\omega_{k})(x,z_{2},\cdots,\mathscr{D}(z_{i}),\cdots,z_{k})\\
 & +d^{k}\varphi_{k}(x,\mathscr{D}(x),z_{2},\cdots,z_{k}).\nonumber
\end{align}

By expanding (3.1) and (3.2) further, it follows from $\eta_{V}\text{\textopenbullet}\rho(x)=\rho(\mathscr{D}(x))+\rho(x)\circ\eta_{V}$,
$\mathscr{D}([z_{i},z_{j}]_{\mathfrak{g}})=[\mathscr{D}(z_{i}),z_{j}]_{\mathfrak{g}}+[z_{i},\mathscr{D}(z_{j})]_{\mathfrak{g}}$
and $\mathscr{D}(x^{[2]})=[x,\mathscr{D}(x)]_{\mathfrak{g}}$, $\forall x,z_{i},z_{j}\in\mathfrak{g}$
that $\delta(d^{k}\omega_{k})=d^{k}(\delta\omega_{k})$.

The proof is complete.
\end{proof}
\begin{theorem} The map $\vartheta^{n}$, $n\geq1$ defined above
is a coboundary operator, i.e., $\vartheta^{n+1}\circ\vartheta^{n}=0$.

\end{theorem}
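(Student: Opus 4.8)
The plan is to observe that, in degrees $\geq3$, the cochain complex $\bigl(C_{\mathrm{ResLD}}^{\bullet}(\mathfrak{g};V),\vartheta^{\bullet}\bigr)$ is simply the mapping cone of the degree-zero cochain endomorphism $\delta$ of the restricted complex $\bigl(C_{*2}^{\bullet}(\mathfrak{g};V),\partial^{\bullet}\bigr)$ recalled from \cite{EM}: writing $c_{k}=(\varphi_{k},\omega_{k})$ for a restricted $k$-cochain, the defining formula is $\vartheta^{n}(c_{n},c_{n-1})=\bigl(\partial^{n}c_{n},\ \partial^{n-1}c_{n-1}+\delta c_{n}\bigr)$ (the customary sign being irrelevant over $\mathbb{F}$). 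Two facts are the engine of the argument: first, $\partial^{\bullet}$ is a coboundary operator on $C_{*2}^{\bullet}(\mathfrak{g};V)$, i.e. $\partial^{k+1}\circ\partial^{k}=0$ (equivalently $d^{k+1}d^{k}\varphi_{k}=0$ and $d^{k+1}d^{k}\omega_{k}=0$), which is part of the cohomology theory recalled from \cite{EM}; and second, $\partial^{\bullet}$ and $\delta$ commute, $\partial^{k}\circ\delta=\delta\circ\partial^{k}$, which is the Lemma just established.

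For $n\geq3$ the verification is the standard cone computation. Applying $\vartheta^{n+1}$ to $\vartheta^{n}(c_{n},c_{n-1})=\bigl(\partial^{n}c_{n},\ \partial^{n-1}c_{n-1}+\delta c_{n}\bigr)$ produces top component $\partial^{n+1}\partial^{n}c_{n}=0$ and bottom component
\[
\partial^{n}(\partial^{n-1}c_{n-1}+\delta c_{n})+\delta(\partial^{n}c_{n})=\partial^{n}\partial^{n-1}c_{n-1}+\partial^{n}\delta c_{n}+\delta\partial^{n}c_{n},
\]
whose first summand vanishes by $\partial^{n}\circ\partial^{n-1}=0$ while the remaining two cancel because $\partial^{n}\delta c_{n}=\delta\partial^{n}c_{n}$ and $a+a=0$ in characteristic $2$. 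This settles both the $\varphi$- and the $\omega$-parts at once.

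It remains to dispatch the irregular low degrees $n=1,2$, where $C_{\mathrm{ResLD}}^{0}(\mathfrak{g};V)=0$, $C_{\mathrm{ResLD}}^{1}(\mathfrak{g};V)=\mathrm{Hom}(\mathfrak{g};V)$, and $\partial^{1}\varphi_{1}=(d^{1}\varphi_{1},\omega_{\varphi_{1}})$ carries the auxiliary map $\omega_{\varphi_{1}}$ rather than a free second summand. For $n=1$ one expands
\[
\vartheta^{2}\vartheta^{1}\varphi_{1}=\bigl((d^{2}d^{1}\varphi_{1},\ d^{2}\omega_{\varphi_{1}}),\ (d^{1}\delta\varphi_{1}+\delta d^{1}\varphi_{1},\ \omega_{\delta\varphi_{1}}+\delta\omega_{\varphi_{1}})\bigr)
\]
and observes that $(d^{2}d^{1}\varphi_{1},\,d^{2}\omega_{\varphi_{1}})$ is precisely $\partial^{2}\partial^{1}\varphi_{1}=0$, while $d^{1}\delta\varphi_{1}=\delta d^{1}\varphi_{1}$ and $\omega_{\delta\varphi_{1}}=\delta\omega_{\varphi_{1}}$ are exactly the identities of the Lemma, so every entry vanishes over $\mathbb{F}$. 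For $n=2$, expanding $\vartheta^{3}\vartheta^{2}((\varphi_{2},\omega_{2}),\varphi_{1})$ gives top component $\partial^{3}\partial^{2}(\varphi_{2},\omega_{2})=0$, bottom $\varphi$-part $d^{2}d^{1}\varphi_{1}+d^{2}\delta\varphi_{2}+\delta d^{2}\varphi_{2}$, and bottom $\omega$-part $d^{2}\omega_{\varphi_{1}}+d^{2}\delta\omega_{2}+\delta d^{2}\omega_{2}$, each of which vanishes because $(d^{2}d^{1}\varphi_{1},\,d^{2}\omega_{\varphi_{1}})=\partial^{2}\partial^{1}\varphi_{1}=0$ and by the commutation Lemma, again using $\mathrm{char}\,\mathbb{F}=2$.

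The main obstacle is organizational rather than conceptual: one must track which auxiliary map occupies which slot --- $\omega_{\varphi_{1}}$ versus $\omega_{\delta\varphi_{1}}$, and $d^{k}\omega_{k}$ versus $\delta\omega_{k}$ --- so that the identities $\delta\omega_{\varphi_{1}}=\omega_{\delta\varphi_{1}}$ and $\delta(d^{k}\omega_{k})=d^{k}(\delta\omega_{k})$ are invoked on the correct components, and one must apply $\partial^{k+1}\circ\partial^{k}=0$ componentwise, in particular noting that $d^{2}d^{1}\varphi_{1}=0$ \emph{and} $d^{2}\omega_{\varphi_{1}}=0$ are both available. It is also worth checking in passing that $\vartheta^{n}$ genuinely lands in $C_{\mathrm{ResLD}}^{n+1}(\mathfrak{g};V)$, i.e. that the output $\omega$-maps still obey the required homogeneity and additivity-up-to-$\varphi$ conditions; this is inherited from the corresponding properties of $\partial^{\bullet}$ and $\delta$ and needs only a routine check.
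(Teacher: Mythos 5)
Your proposal is correct and follows essentially the same route as the paper: the paper's proof likewise expands $\vartheta^{n+1}\vartheta^{n}$ componentwise, kills the top components by $\partial^{n+1}\circ\partial^{n}=0$, and cancels the cross terms using the commutation lemma $\partial^{n}\circ\delta=\delta\circ\partial^{n}$ (including $\omega_{\delta\varphi_{1}}=\delta\omega_{\varphi_{1}}$) together with characteristic $2$, treating $n=1$, $n=2$, and $n\geq3$ separately exactly as you do. Your mapping-cone framing is a pleasant conceptual gloss but the computation is identical.
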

\begin{proof}
According to Lemma 3.2, we have $\partial^{n}\text{\textopenbullet}\delta=\delta\text{\textopenbullet}\partial^{n}$.
It follows that
\begin{align*}
\vartheta^{2}\vartheta^{1}\varphi_{1} & =\vartheta^{2}((d^{1}\varphi_{1},\omega_{\varphi_{1}}),\delta\varphi_{1})\\
 & =((d^{2}d^{1}\varphi_{1},d^{2}\omega_{\varphi_{1}}),(d^{1}(\delta\varphi_{1})+\delta(d^{1}\varphi_{1}),\omega_{\delta\varphi_{1}}+\delta\omega_{\varphi_{1}}))=0,
\end{align*}
\begin{align*}
 & \vartheta^{3}\vartheta^{2}(\varphi_{2},\omega_{2},\varphi_{1})\\
= & \vartheta^{3}((d^{2}\varphi_{2},d^{2}\omega_{2}),(d^{1}\varphi_{1}+\delta\varphi_{2},\omega_{\varphi_{1}}+\delta\omega_{2}))\\
= & ((d^{3}d^{2}\varphi_{2},d^{3}d^{2}\omega_{2}),(d^{2}(d^{1}\varphi_{1}+\delta\varphi_{2})+\delta(d^{2}\varphi_{2}),d^{2}(\omega_{\varphi_{1}}+\delta\omega_{2})+\delta(d^{2}\omega_{2})))\\
= & ((0,0),((d^{2}\delta+\delta d^{2})\varphi_{2},(d^{2}\delta+\delta d^{2})\omega_{2}))=0
\end{align*}
and
\begin{align*}
 & \vartheta^{n+1}\vartheta^{n}((\varphi_{n},\omega_{n}),(\varphi_{n-1},\omega_{n-1}))\\
= & \vartheta^{n+1}((d^{n}\varphi_{n},d^{n}\omega_{n}),(d^{n-1}\varphi_{n-1}+\delta\varphi_{n},d^{n-1}\omega_{n-1}+\delta\omega_{n}))\\
= & ((d^{n+1}d^{n}\varphi_{n},d^{n+1}d^{n}\omega_{n}),(d^{n}d^{n-1}\varphi_{n-1}+(d^{n}\delta+\delta d^{n})\varphi_{n},d^{n}d^{n-1}\omega_{n-1}+(d^{n}\delta+\delta d^{n})\omega_{n}))\\
= & 0.
\end{align*}

The proof is complete.
\end{proof}
We obtain a cohomology complex $(C_{\mathrm{ResLD}}^{n}(\mathfrak{g};V),\vartheta^{n})$
of the ResLieDer pair $((\mathfrak{g},[2]),\mathscr{D})$, which is
associated to the restricted representation $(V,\rho,\eta_{V})$.
Denote by $Z_{\mathrm{ResLD}}^{n}(\mathfrak{g};V)$ the set of $n$-cocycles
and $B_{\mathrm{ResLD}}^{n}(\mathfrak{g};V)$ the set of $n$-coboundaries.
And the corresponding cohomology group is defined by
\[
H_{\mathrm{ResLD}}^{n}(\mathfrak{g};V)=Z_{\mathrm{ResLD}}^{n}(\mathfrak{g};V)/B_{\mathrm{ResLD}}^{n}(\mathfrak{g};V).
\]

\section{Deformations of a ResLieDer pair}

\subsection{Formal deformations of a ResLieDer pair}

In this subsection, the 1-parameter formal deformation of the ResLieDer
pair $((\mathfrak{g},[2]),\mathscr{D})$ will be studied with the
cohomology associated to the adjoint representation. Note that a 2-cochain
$((\varphi_{2},\omega_{2}),\varphi_{1})\in C_{\mathrm{ResLD}}^{2}(\mathfrak{g};\mathfrak{g})$
is a 2-cocycle if the following equations are satisfied (for all $x,y,z\in\mathfrak{g}$):

\begin{equation}
[x,\varphi_{2}(y,z)]_{\mathfrak{g}}+\varphi_{2}([y,z]_{\mathfrak{g}},x)+\circlearrowleft(x,y,z)=0,
\end{equation}
\begin{equation}
[x,\varphi_{2}(x,y)]_{\mathfrak{g}}+\varphi_{2}(x^{[2]_{\alpha}},y)+\varphi_{2}([x,y]_{\mathfrak{g}},x)+[y,\omega_{2}(x)]_{\mathfrak{g}}=0,
\end{equation}
\begin{equation}
\varphi_{1}([x,y]_{\mathfrak{g}})+[x,\varphi_{1}(y)]_{\mathfrak{g}}+[y,\varphi_{1}(x)]_{\mathfrak{g}}+\varphi_{2}(\mathscr{D}(x),y)+\varphi_{2}(x,\mathscr{D}(y))+\mathscr{D}(\varphi_{2}(x,y))=0,
\end{equation}
\begin{equation}
\varphi_{1}(x^{[2]})+[x,\varphi_{1}(x)]_{\mathfrak{g}}+\mathscr{D}(\omega_{2}(x))+\varphi_{2}(x,\mathscr{D}(x))=0.
\end{equation}

Let $((\mathfrak{g},[2]),\mathscr{D})$ be a ResLieDer pair. Consider
a $t$-parametrized family of operators

\begin{align*}
\mu_{t}(x,y) & =\sum_{i\geq0}\mu_{i}(x,y)t^{i},\\
\sigma_{t}(x) & =\sum_{i\geq0}\sigma_{i}(x)t^{i},\\
\mathscr{D}_{t}(x) & =\sum_{i\geq0}\mathscr{D}_{i}(x)t^{i},
\end{align*}
where $\mu_{i}:\land^{2}\mathfrak{g}\rightarrow\mathfrak{g}$ is a
bilinear map, $\sigma_{i}:\mathfrak{g}\rightarrow\mathfrak{g}$ is
a map and $\mathscr{D}_{i}:\mathfrak{g}\rightarrow\mathfrak{g}$ is
a linear map.

\begin{definition} If all $((\mu_{t},\sigma_{t}),\mathscr{D}_{t})$
endow the $\mathbb{F}[[t]]$-module $\mathfrak{g}[[t]]$ the ResLieDer-pair
structure with $((\mu_{0},\sigma_{0}),\mathscr{D}_{0})=(([\cdot,\cdot]\mathfrak{_{g}},[2]),\mathscr{D})$,
then $((\mu_{t},\sigma_{t}),\mathscr{D}_{t})$ is called a 1-parameter
formal deformation of the ResLieDer pair $((\mathfrak{g},[2]),\mathscr{D})$.

\end{definition}

The pair $((\mu_{t},\sigma_{t}),\mathscr{D}_{t})$ defined as above
is a 1-parameter formal deformation of the ResLieDer pair $((\mathfrak{g},[2]),\mathscr{D})$
if and only if for all $x,y,z\in\mathfrak{g}$ and $a\in\mathbb{F}$,
the following equations hold:
\begin{align*}
\mu_{t}(x,\mu_{t}(y,z))+\circlearrowleft(x,y,z) & =0,\\
\mu_{t}(\sigma_{t}(x),y)+\mu_{t}(x,\mu_{t}(x,y)) & =0,\\
\sigma_{t}(ax)+a^{2}\sigma_{t}(x) & =0,\\
\sigma_{t}(x+y)+\sigma_{t}(x)+\sigma_{t}(y)+\mu_{t}(x,y) & =0,\\
\mathscr{D}_{t}(\mu_{t}(x,y))+\mu_{t}(\mathscr{D}_{t}(x),y)+\mu_{t}(x,\mathscr{D}_{t}(y)) & =0,\\
\mathscr{D}_{t}(\sigma_{t}(x))+\mu_{t}(x,\mathscr{D}_{t}(x)) & =0.
\end{align*}

\begin{lemma} If $((\mu_{t},\sigma_{t}),\mathscr{D}_{t})$ is a 1-parameter
formal deformation of the ResLieDer pair $((\mathfrak{g},[2]),\mathscr{D})$,
then $((\mu_{i},\sigma_{i}),\mathscr{D}_{i})\in C_{\mathrm{ResLD}}^{2}(\mathfrak{g};\mathfrak{g})$,
for all $i\geq0$.

\end{lemma}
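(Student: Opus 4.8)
I would begin by noting that the statement asserts only membership in the \emph{cochain} space $C_{\mathrm{ResLD}}^{2}(\mathfrak{g};\mathfrak{g})=C_{*2}^{2}(\mathfrak{g};\mathfrak{g})\times C_{*2}^{1}(\mathfrak{g};\mathfrak{g})$; no cocycle identity is involved, so the plan is purely to unwind the definition of the cochain space against the ansatz for $((\mu_t,\sigma_t),\mathscr{D}_t)$. Recall that $(\varphi_2,\omega_2)\in C_{*2}^{2}(\mathfrak{g};\mathfrak{g})$ means $\varphi_2\colon\land^{2}\mathfrak{g}\to\mathfrak{g}$ is bilinear and $\omega_2\colon\mathfrak{g}\to\mathfrak{g}$ satisfies $\omega_2(ax)=a^{2}\omega_2(x)$ and $\omega_2(x+y)=\omega_2(x)+\omega_2(y)+\varphi_2(x,y)$, while $C_{*2}^{1}(\mathfrak{g};\mathfrak{g})=\mathrm{Hom}(\mathfrak{g};\mathfrak{g})$. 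Thus for each fixed $i\geq0$ I must check three things: that $\mu_i$ is an alternating bilinear map $\land^{2}\mathfrak{g}\to\mathfrak{g}$; that $\sigma_i(ax)=a^{2}\sigma_i(x)$ and $\sigma_i(x+y)=\sigma_i(x)+\sigma_i(y)+\mu_i(x,y)$; and that $\mathscr{D}_i$ is linear.

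The first and last are essentially immediate from the form of the deformation: each $\mu_i\colon\land^{2}\mathfrak{g}\to\mathfrak{g}$ is a bilinear map and each $\mathscr{D}_i\colon\mathfrak{g}\to\mathfrak{g}$ is linear by the very setup, the latter already placing $\mathscr{D}_i$ in $C_{*2}^{1}(\mathfrak{g};\mathfrak{g})$. If one prefers to derive rather than assume these, I would extract the coefficient of $t^{i}$ from the $\mathbb{F}[[t]]$-bilinearity of the Lie bracket $\mu_t$ and the $\mathbb{F}[[t]]$-linearity of $\mathscr{D}_t$ on $\mathfrak{g}[[t]]$; alternativity of $\mu_i$ then follows from $\mu_t(x,x)=0$, the characteristic-$2$ form of anticommutativity of a Lie bracket, by comparing $t^{i}$-coefficients.

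The substantive part is the two identities for $\sigma_i$, and these I would obtain by comparing coefficients of $t^{i}$ in the two deformation equations governing the $2$-mapping,
\[
\sigma_t(ax)+a^{2}\sigma_t(x)=0,\qquad \sigma_t(x+y)+\sigma_t(x)+\sigma_t(y)+\mu_t(x,y)=0,
\]
which hold for all $x,y\in\mathfrak{g}$, $a\in\mathbb{F}$ because $((\mu_t,\sigma_t),\mathscr{D}_t)$ endows $\mathfrak{g}[[t]]$ with a ResLieDer-pair structure. The $t^{i}$-coefficient of the first equation is $\sigma_i(ax)+a^{2}\sigma_i(x)=0$, i.e. $\sigma_i(ax)=a^{2}\sigma_i(x)$ since $\mathrm{char}\,\mathbb{F}=2$; the $t^{i}$-coefficient of the second is $\sigma_i(x+y)+\sigma_i(x)+\sigma_i(y)+\mu_i(x,y)=0$, i.e. $\sigma_i(x+y)=\sigma_i(x)+\sigma_i(y)+\mu_i(x,y)$. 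Together with the bilinearity of $\mu_i$ this yields $(\mu_i,\sigma_i)\in C_{*2}^{2}(\mathfrak{g};\mathfrak{g})$, hence $((\mu_i,\sigma_i),\mathscr{D}_i)\in C_{\mathrm{ResLD}}^{2}(\mathfrak{g};\mathfrak{g})$ for every $i\geq0$.

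There is no real obstacle here: the whole argument is a coefficient-by-coefficient reading of two of the six deformation equations, so the only points needing a little care are to use \emph{only} the equations governing the $2$-mapping (the Jacobi, $\mathrm{ad}$-compatibility and derivation equations are irrelevant to cochain membership and enter only later, when the cocycle identity is checked), and to note that the identities extracted at order $t^{0}$ hold automatically because $((\mu_0,\sigma_0),\mathscr{D}_0)=(([\cdot,\cdot]_{\mathfrak{g}},[2]),\mathscr{D})$ is the original ResLieDer pair.
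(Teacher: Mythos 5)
Your proposal is correct and follows essentially the same route as the paper: one extracts the coefficient of $t^{i}$ from the two deformation equations $\sigma_{t}(ax)+a^{2}\sigma_{t}(x)=0$ and $\sigma_{t}(x+y)+\sigma_{t}(x)+\sigma_{t}(y)+\mu_{t}(x,y)=0$ to get the two defining conditions on $\sigma_{i}$, while bilinearity of $\mu_{i}$ and linearity of $\mathscr{D}_{i}$ are part of the setup. Your additional remarks (alternativity of $\mu_{i}$ from $\mu_{t}(x,x)=0$, and the order-$t^{0}$ case being automatic) are harmless refinements of the same argument.
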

\begin{proof}
Since $((\mu_{t},\sigma_{t}),\mathscr{D}_{t})$ is a 1-parameter formal
deformation, we have
\[
\sigma_{t}(x+y)+\sigma_{t}(x)+\sigma_{t}(y)+\mu_{t}(x,y)=0\quad\mathrm{and}\quad\sigma_{t}(ax)+a^{2}\sigma_{t}(x)=0,
\]
for any $x,y\in\mathfrak{g}$ and $a\in\mathbb{F}$. It follows that
\[
\sum_{i\geq0}\sigma_{i}(x+y)t^{i}+\sum_{i\geq0}\sigma_{i}(x)t^{i}+\sum_{i\geq0}\sigma_{i}(y)t^{i}+\sum_{i\geq0}\mu_{i}(x,y)t^{i}=0
\]
and
\[
\sum_{i\geq0}\sigma_{i}(ax)t^{i}=a^{2}\sum_{i\geq0}\sigma_{i}(x)t^{i}=\sum_{i\geq0}(a^{2}\sigma_{i}(x))t^{i}.
\]
 Therefore, for any $i\geq0$, we have
\[
\sigma_{i}(x+y)=\sigma_{i}(x)+\sigma_{i}(y)+\mu_{i}(x,y),\;\sigma_{i}(ax)=a^{2}\sigma_{i}(x).
\]
It implies that $(\mu_{i},\sigma_{i})\in C_{*2}^{2}(\mathfrak{g};\mathfrak{g})$.
Therefore, we have $((\mu_{i},\sigma_{i}),\mathscr{D}_{i})\in C_{\mathrm{ResLD}}^{2}(\mathfrak{g};\mathfrak{g})$.
\end{proof}
\begin{proposition} Let $((\mu_{t},\sigma_{t}),\mathscr{D}_{t})$
be a 1-parameter formal deformation of the ResLieDer pair $((\mathfrak{g},[2]),\mathscr{D}).$
Then $((\mu_{1},\sigma_{1}),\mathscr{D}_{1})\in Z_{\mathrm{ResLD}}^{2}(\mathfrak{g};\mathfrak{g})$,
i.e., it is a 2-cocycle of the ResLieDer pair $((\mathfrak{g},[2]),\mathscr{D})$
with the coefficients in the adjoint representation.

\end{proposition}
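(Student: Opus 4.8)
The plan is to extract the coefficient of $t$ from each of the six defining equations of a $1$-parameter formal deformation and show that, collected together, these coefficient equations are exactly the four cocycle conditions (4.1)--(4.4) for the triple $((\mu_1,\sigma_1),\mathscr{D}_1)$. By Lemma 4.4 we already know $((\mu_1,\sigma_1),\mathscr{D}_1)\in C^2_{\mathrm{ResLD}}(\mathfrak g;\mathfrak g)$, so it remains only to verify that it is a cocycle, i.e.\ lies in the kernel of $\vartheta^2$.

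First I would treat the three ``restricted Lie algebra'' equations. Expanding $\mu_t(x,\mu_t(y,z))+\circlearrowleft(x,y,z)=0$ as a power series in $t$ and using $\mu_0=[\cdot,\cdot]_{\mathfrak g}$, the $t^0$-term is the Jacobi identity (automatic) and the $t^1$-term reads $[x,\mu_1(y,z)]_{\mathfrak g}+\mu_1(x,[y,z]_{\mathfrak g})+\circlearrowleft(x,y,z)=0$, which is precisely (4.1) for $\varphi_2=\mu_1$. Similarly, the $t^1$-coefficient of $\mu_t(\sigma_t(x),y)+\mu_t(x,\mu_t(x,y))=0$, again using $\mu_0=[\cdot,\cdot]_{\mathfrak g}$, $\sigma_0=[2]$, gives (4.2). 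The scalar and additivity conditions on $\sigma_t$ only reconfirm that $(\mu_1,\sigma_1)$ is a genuine $2$-cochain in $C^2_{*2}$ (already handled by Lemma 4.4), so they contribute nothing new.

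Next I would handle the two ``derivation'' equations. The $t^1$-coefficient of $\mathscr{D}_t(\mu_t(x,y))+\mu_t(\mathscr{D}_t(x),y)+\mu_t(x,\mathscr{D}_t(y))=0$, using $\mathscr{D}_0=\mathscr{D}$ and $\mu_0=[\cdot,\cdot]_{\mathfrak g}$, yields
\[
\mathscr{D}(\mu_1(x,y))+\mu_1(\mathscr{D}(x),y)+\mu_1(x,\mathscr{D}(y))+\mathscr{D}_1([x,y]_{\mathfrak g})+[\mathscr{D}_1(x),y]_{\mathfrak g}+[x,\mathscr{D}_1(y)]_{\mathfrak g}=0,
\]
which is exactly (4.3) with $\varphi_1=\mathscr{D}_1$, $\varphi_2=\mu_1$ (recalling that in characteristic $2$, $d^1\mathscr{D}_1(x,y)=\mathscr{D}_1([x,y]_{\mathfrak g})+[x,\mathscr{D}_1(y)]_{\mathfrak g}+[y,\mathscr{D}_1(x)]_{\mathfrak g}$ and $\delta\varphi_2(x,y)=\mathscr{D}(\varphi_2(x,y))+\varphi_2(\mathscr{D}(x),y)+\varphi_2(x,\mathscr{D}(y))$). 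Likewise the $t^1$-coefficient of $\mathscr{D}_t(\sigma_t(x))+\mu_t(x,\mathscr{D}_t(x))=0$ gives $\mathscr{D}(\sigma_1(x))+\mathscr{D}_1(x^{[2]})+[x,\mathscr{D}_1(x)]_{\mathfrak g}+\mu_1(x,\mathscr{D}(x))=0$, which is (4.4). Assembling: $(d^2\mu_1,d^2\sigma_1)=0$ from (4.1)--(4.2), and $(d^1\mathscr{D}_1+\delta\mu_1,\,\omega_{\mathscr{D}_1}+\delta\sigma_1)=0$ from (4.3)--(4.4); by the formula for $\vartheta^2$ this says $\vartheta^2((\mu_1,\sigma_1),\mathscr{D}_1)=0$.

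The only mild subtlety I anticipate is bookkeeping: matching the raw $t^1$-coefficients with the coboundary formulas requires care with which slot $\mathscr{D}$ versus $\mathscr{D}_1$ acts in, and with the characteristic-$2$ simplifications (no signs, $\circlearrowleft$ being a plain cyclic sum). There is no real analytic or structural obstacle here—the proof is a term-by-term identification—so the writeup can afford to display the $t^1$-coefficient of each of the four relevant equations and simply observe that they coincide with (4.1)--(4.4).
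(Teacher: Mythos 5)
Your proposal is correct and follows essentially the same route as the paper's own proof: expand each defining identity of the deformation in powers of $t$, use $(\mu_0,\sigma_0,\mathscr{D}_0)=([\cdot,\cdot]_{\mathfrak g},[2],\mathscr{D})$ to kill the $t^0$-terms, and read off the coefficient of $t$ to recover exactly Eqs.~(4.1)--(4.4), which together say $\vartheta^2((\mu_1,\sigma_1),\mathscr{D}_1)=0$. The only discrepancy is cosmetic: the cochain membership you invoke is Lemma~4.2 of the paper, not ``Lemma~4.4.''
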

\begin{proof}
For any $x,y,z\in\mathfrak{g}$, we have
\begin{align*}
0= & \mu_{t}(x,\mu_{t}(y,z))+\circlearrowleft(x,y,z)=\mu_{t}(x,[y,z]_{\mathfrak{g}}+\sum_{i\geq1}\mu_{i}(y,z)t^{i})+\circlearrowleft(x,y,z)\\
= & \mu_{t}(x,[y,z]_{\mathfrak{g}})+\mu_{t}(x,\sum_{i\geq1}\mu_{i}(y,z))t^{i}+\circlearrowleft(x,y,z)\\
= & [x,[y,z]_{\mathfrak{g}}]_{\mathfrak{g}}+\sum_{j\geq1}\mu_{j}(x,[y,z]_{\mathfrak{g}})t^{j}+\sum_{i\geq1}[x,\mu_{i}(y,z)]_{\mathfrak{g}}t^{i}\\
 & +\sum_{i,j\geq1}\mu_{j}(x,\mu_{i}(y,z))t^{i+j}+\circlearrowleft(x,y,z)\\
= & \sum_{j\geq1}\mu_{j}(x,[y,z]_{\mathfrak{g}})t^{j}+\sum_{i\geq1}[x,\mu_{i}(y,z)]_{\mathfrak{g}}t^{i}+\sum_{i,j\geq1}\mu_{j}(x,\mu_{i}(y,z))t^{i+j}+\circlearrowleft(x,y,z).
\end{align*}
By collecting the coefficients of $t$ in the above equation, we obtian
\[
[x,\mu_{1}(y,z)]_{\mathfrak{g}}+\mu_{1}(x,[y,z]_{\mathfrak{g}})+\circlearrowleft(x,y,z)=0,
\]
which implies that the Eq. (4.1) holds. Moreover, by comparing the
coefficients of $t$ of the following equation
\begin{align*}
0= & \mu_{t}(\sigma_{t}(x),y)+\mu_{t}(x,\mu_{t}(x,y))=\mu_{t}(x^{[2]}+\sum_{i\geq1}\sigma_{i}(x)t^{i},y)+\mu_{t}(x,[x,y]_{\mathfrak{g}}+\sum_{i\geq1}\mu_{i}(x,y)t^{i})\\
= & \mu_{t}(x^{[2]},y)+\sum_{i\geq1}\mu_{t}(\sigma_{i}(x),y)t^{i}+\mu_{t}(x,[x,y]_{\mathfrak{g}})+\sum_{i\geq1}\mu_{t}(x,\mu_{i}(x,y))t^{i}\\
= & [x^{[2]},y]_{\mathfrak{g}}+\sum_{j\geq1}\mu_{j}(x^{[2]},y)t^{j}+\sum_{i\geq1}[\sigma_{i}(x),y]_{\mathfrak{g}}t^{i}+\sum_{i,j\geq1}\mu_{j}(\sigma_{i}(x),y)t^{i+j}\\
 & +[x,[x,y]_{\mathfrak{g}}]_{\mathfrak{g}}+\sum_{j\geq1}\mu_{j}(x,[x,y]_{\mathfrak{g}})t^{j}+\sum_{i\geq1}[x,\mu_{i}(x,y)]_{\mathfrak{g}}t^{i}+\sum_{i,j\geq1}\mu_{j}(x,\mu_{i}(x,y))t^{i+j},
\end{align*}
we obtain
\[
\mu_{1}(x^{[2]},y)+[\sigma_{1}(x),y]_{\mathfrak{g}}+\mu_{1}(x,[x,y]_{\mathfrak{g}})+[x,\mu_{1}(x,y)]_{\mathfrak{g}}=0.
\]
It implies that he Eq. $(4.2)$ is satisfied. Similarly, by expanding
the equation $\mathscr{D}_{t}(\mu_{t}(x,y))+\mu_{t}(\mathscr{D}_{t}(x),y)+\mu_{t}(x,\mathscr{D}_{t}(y))=0$
and comparing the coefficients of $t$, we have
\[
\mathscr{D}_{1}([x,y]_{\mathfrak{g}})+[x,\mathscr{D}_{1}(y)]_{\mathfrak{g}}+[y,\mathscr{D}_{1}(x)]_{\mathfrak{g}}+\mu_{1}(\mathscr{D}(x),y)+\mu_{1}(x,\mathscr{D}(y))+\mathscr{D}(\mu_{1}(x,y))=0.
\]
That is, the Eq. (4.3) is satisfied. It remains to prove that the
Eq. $(4.4)$ is satisfied.

Indeed, for any $x\in\mathfrak{g}$, we have
\begin{align*}
0= & \mathscr{D}_{t}(\sigma_{t}(x))+\mu_{t}(x,\mathscr{D}_{t}(x))=\mathscr{D}_{t}(x^{[2]}+\sum_{i\geq1}\sigma_{i}(x)t^{i})+\mu_{t}(x,\mathscr{D}(x)+\sum_{i\geq1}\mathscr{D}_{i}(x)t^{i})\\
= & \mathscr{D}_{t}(x^{[2]})+\sum_{i\geq1}\mathscr{D}_{t}(\sigma_{i}(x))t^{i}+\mu_{t}(x,\mathscr{D}(x))+\sum_{i\geq1}\mu_{t}(x,\mathscr{D}_{i}(x))t^{i}\\
= & \mathscr{D}(x^{[2]})+\sum_{i\geq1}\mathscr{D}_{i}(x^{[2]})t^{i}+\sum_{i\geq1}\mathscr{D}(\sigma_{i}(x))t^{i}+\sum_{i,j\geq1}\mathscr{D}_{j}(\sigma_{i}(x))t^{i+j}+[x,\mathscr{D}(x)]_{\mathfrak{g}}\\
 & +\sum_{i\geq1}\mu_{i}(x,\mathscr{D}(x))t^{i}+\sum_{i\geq1}[x,\mathscr{D}_{i}(x)]_{\mathfrak{g}}t^{i}+\sum_{i,j\geq1}\mu_{j}(x,\mathscr{D}_{i}(x))t^{i+j}\\
= & \sum_{i\geq1}\mathscr{D}_{i}(x^{[2]})t^{i}+\sum_{i\geq1}\mathscr{D}(\sigma_{i}(x))t^{i}+\sum_{i,j\geq1}\mathscr{D}_{j}(\sigma_{i}(x))t^{i+j}+\sum_{i\geq1}\mu_{i}(x,\mathscr{D}(x))t^{i}\\
 & +\sum_{i\geq1}[x,\mathscr{D}_{i}(x)]_{\mathfrak{g}}t^{i}+\sum_{i,j\geq1}\mu_{j}(x,\mathscr{D}_{i}(x))t^{i+j}.
\end{align*}
By collecting the coefficients of $t$ in the above equation, we obtain
\[
\mathscr{D}_{1}(x^{[2]})+[x,\mathscr{D}_{1}(x)]_{\mathfrak{g}}+\mathscr{D}(\sigma_{1}(x))+\mu_{1}(x,\mathscr{D}(x))=0.
\]
It implies that the Eq. $(4.4)$ is satisfied. Therefore, we have
$((\mu_{1},\sigma_{1}),\mathscr{D}_{1})\in Z_{\mathrm{ResLD}}^{2}(\mathfrak{g};\mathfrak{g})$.
\end{proof}
\begin{definition} The 2-cocycle $((\mu_{1},\sigma_{1}),\mathscr{D}_{1})$
is called the infinitesimal of the 1-parameter formal deformation
$((\mu_{t},\sigma_{t}),\mathscr{D}_{t})$ of the ResLieDer pair $((\mathfrak{g},[2]),\mathscr{D})$.

\end{definition}

\begin{definition} Let $((\tilde{\mu}_{t},\tilde{\sigma}_{t}),\mathscr{\tilde{D}}_{t})$
and $((\mu_{t},\sigma_{t}),\mathscr{D}_{t})$ be 1-parameter formal
deformations of the ResLieDer pair $((\mathfrak{g},[2]),\mathscr{D})$.
A formal isomorphism from $((\tilde{\mu}_{t},\tilde{\sigma}_{t}),\mathscr{\tilde{D}}_{t})$
to $((\mu_{t},\sigma_{t}),\mathscr{D}_{t})$ is a power series $\pi_{t}=\sum_{i\geq0}\pi_{i}t^{i}:\mathfrak{g}[[t]]\rightarrow\mathfrak{g}[[t]]$,
where $\pi_{i}\in C_{*2}^{1}(\mathfrak{g};\mathfrak{g})$ and $\pi_{0}=\mathrm{Id}$,
such that
\begin{align*}
\pi_{t}\text{\textopenbullet}\tilde{\mu}_{t} & =\mu_{t}(\pi_{t}\land\pi_{t}),\\
\pi_{t}\text{\textopenbullet}\tilde{\sigma}_{t} & =\sigma_{t}\text{\textopenbullet}\pi_{t},\\
\pi_{t}\text{\textopenbullet}\tilde{\mathscr{D}}_{t} & =\mathscr{D}_{t}\text{\textopenbullet}\pi_{t}.
\end{align*}

Two 1-parameter formal deformations $((\tilde{\mu}_{t},\tilde{\sigma}_{t}),\mathscr{\tilde{D}}_{t})$
and $((\mu_{t},\sigma_{t}),\mathscr{D}_{t})$ is said to be equivalent
if there exists a formal isomorphism $\pi_{t}:((\tilde{\mu}_{t},\tilde{\sigma}_{t}),\mathscr{\tilde{D}}_{t})\rightarrow((\mu_{t},\sigma_{t}),\mathscr{D}_{t})$.

\end{definition}

\begin{theorem} The infinitesimals of two equivalent 1-parameter
formal deformations of a ResLieDer pair $((\mathfrak{g},[2]),\mathscr{D})$
are in the same cohomology class.

\end{theorem}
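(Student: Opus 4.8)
The plan is to take a formal isomorphism $\pi_t = \sum_{i\geq 0}\pi_i t^i$ from $((\tilde\mu_t,\tilde\sigma_t),\tilde{\mathscr D}_t)$ to $((\mu_t,\sigma_t),\mathscr D_t)$ with $\pi_0 = \mathrm{Id}$, and show that the degree-one parts of the two deformations differ by the coboundary $\vartheta^1\pi_1$. First I would extract the linear-in-$t$ consequences of the three compatibility identities satisfied by $\pi_t$. Expanding $\pi_t\text{\textopenbullet}\tilde\mu_t = \mu_t(\pi_t\wedge\pi_t)$ and collecting the coefficient of $t$ gives
\[
\tilde\mu_1(x,y) + \pi_1([x,y]_{\mathfrak g}) = \mu_1(x,y) + [\pi_1(x),y]_{\mathfrak g} + [x,\pi_1(y)]_{\mathfrak g},
\]
so that $\tilde\mu_1 - \mu_1 = d^1\pi_1$ (using $\mu_0 = [\cdot,\cdot]_{\mathfrak g}$ and characteristic $2$ so signs vanish). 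Similarly, expanding $\pi_t\text{\textopenbullet}\tilde\sigma_t = \sigma_t\text{\textopenbullet}\pi_t$ and comparing coefficients of $t$, using $\sigma_0 = [2]$ and the fact that $\pi_1$ is linear, yields $\tilde\sigma_1(x) - \sigma_1(x) = \pi_1(x^{[2]}) + \mu_0(x,\pi_1(x)) = \omega_{\pi_1}(x)$; here one must be a little careful because $\sigma_t$ is only a quadratic-type map, not linear, but the relevant cross term that survives at order $t$ is exactly $[x,\pi_1(x)]_{\mathfrak g}$ (the polarization term), matching the definition of $\omega_{\pi_1}$. Finally, expanding $\pi_t\text{\textopenbullet}\tilde{\mathscr D}_t = \mathscr D_t\text{\textopenbullet}\pi_t$ and comparing coefficients of $t$, using $\mathscr D_0 = \mathscr D$, gives $\tilde{\mathscr D}_1(x) - \mathscr D_1(x) = \mathscr D(\pi_1(x)) + \eta_{\mathfrak g}\text{\textopenbullet}\pi_1 - \ldots$; more precisely $\pi_1(\mathscr D(x)) + \tilde{\mathscr D}_1(x) = \mathscr D_1(x) + \mathscr D(\pi_1(x))$, i.e. $\tilde{\mathscr D}_1 - \mathscr D_1 = \mathscr D\circ\pi_1 + \pi_1\circ\mathscr D = \delta\pi_1$ (recall $\eta_{\mathfrak g} = \mathscr D$ for the adjoint representation and characteristic $2$).

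Putting these three identities together, I would conclude
\[
\big((\tilde\mu_1,\tilde\sigma_1),\tilde{\mathscr D}_1\big) - \big((\mu_1,\sigma_1),\mathscr D_1\big) = \big((d^1\pi_1,\omega_{\pi_1}),\delta\pi_1\big) = \vartheta^1\pi_1,
\]
which by definition of $\vartheta^1$ lies in $B^2_{\mathrm{ResLD}}(\mathfrak g;\mathfrak g)$. Since both infinitesimals are $2$-cocycles by Proposition 4.4, this shows they represent the same class in $H^2_{\mathrm{ResLD}}(\mathfrak g;\mathfrak g)$, completing the proof.

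The routine part is the bookkeeping of the power-series expansions; the one genuinely delicate point — the step I expect to be the main obstacle — is the $\sigma_t$ identity, because $\sigma_t$ is not $\mathbb F[[t]]$-linear (it is the $2$-mapping, quadratic under scalars and non-additive), so "comparing coefficients of $t$" must be justified by first pinning down which terms of $\sigma_t\text{\textopenbullet}\pi_t$ and $\pi_t\text{\textopenbullet}\tilde\sigma_t$ are actually linear in $t$. One has $\sigma_t(\pi_t(x)) = \sigma_t(x + \pi_1(x)t + \cdots)$, and the additivity-defect formula $\sigma_t(u+v) = \sigma_t(u)+\sigma_t(v)+\mu_t(u,v)$ produces the cross term $\mu_t(x,\pi_1(x)t + \cdots)$ whose order-$t$ contribution is $\mu_0(x,\pi_1(x)) = [x,\pi_1(x)]_{\mathfrak g}$; combined with $\sigma_t(\pi_1(x)t) = \sigma_0(\pi_1(x))t^2 + O(t^3)$ having no order-$t$ term, this isolates exactly $\sigma_1(x) + [x,\pi_1(x)]_{\mathfrak g}$ on that side, and $\pi_1(\sigma_0(x)) + \tilde\sigma_1(x) = \pi_1(x^{[2]}) + \tilde\sigma_1(x)$ on the other. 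Once this is handled carefully the rest follows by direct comparison, and the final assembly into $\vartheta^1\pi_1$ is immediate from the definitions in Section 3.
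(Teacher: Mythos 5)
Your proposal is correct and follows essentially the same route as the paper's proof: expand the three compatibility identities for the formal isomorphism $\pi_t$, collect the coefficients of $t$, and recognize the resulting difference of infinitesimals as $\vartheta^1\pi_1=((d^1\pi_1,\omega_{\pi_1}),\delta\pi_1)$. Your extra care with the non-additivity of $\sigma_t$ (via the defect formula $\sigma_t(u+v)=\sigma_t(u)+\sigma_t(v)+\mu_t(u,v)$) is a welcome justification of a step the paper passes over silently, but it does not change the argument.
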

\begin{proof}
Let $((\tilde{\mu}_{t},\tilde{\sigma}_{t}),\mathscr{\tilde{D}}_{t})$
and $((\mu_{t},\sigma_{t}),\mathscr{D}_{t})$ be two equivalent 1-parameter
formal deformations of a ResLieDer pair $((\mathfrak{g},[2]),\mathscr{D})$
and $\pi_{t}:((\tilde{\mu}_{t},\tilde{\sigma}_{t}),\mathscr{\tilde{D}}_{t})\rightarrow((\mu_{t},\sigma_{t}),\mathscr{D}_{t})$
is a formal isomorphism. Then, for any $x,y\in\mathfrak{g}$, we have
\begin{align*}
\pi_{t}\text{\textopenbullet}\tilde{\mu}_{t}(x,y) & =\mu_{t}(\pi_{t}(x),\pi_{t}(y)),\\
\pi_{t}\text{\textopenbullet}\tilde{\sigma}_{t}(x) & =\sigma_{t}\text{\textopenbullet}\pi_{t}(x),\\
\pi_{t}\text{\textopenbullet}\tilde{\mathscr{D}}_{t}(x) & =\mathscr{D}_{t}\text{\textopenbullet}\pi_{t}(x).
\end{align*}
By expanding the above equations and collecting the coefficients of
$t$, we obtain
\begin{align*}
\tilde{\mu}_{1}(x,y) & =\mu_{1}(x,y)+\pi_{1}([x,y]_{\mathfrak{g}})+[x,\pi_{1}(y)]\mathfrak{_{g}}+[y,\pi_{1}(x)]\mathfrak{_{g}},\\
\tilde{\sigma}_{1}(x) & =\sigma_{1}(x)+\pi_{1}(x^{[2]})+[x,\pi_{1}(x)]_{\mathfrak{g}},\\
\tilde{\mathscr{D}}_{1}(x) & =\mathscr{D}_{1}(x)+\mathscr{D}(\pi_{1}(x))+\pi_{1}(\mathscr{D}(x)).
\end{align*}
It follows that $((\tilde{\mu}_{t},\tilde{\sigma}_{t}),\mathscr{\tilde{D}}_{t})=((\mu_{t},\sigma_{t}),\mathscr{D}_{t})+\vartheta^{1}(\pi_{1})$.
It implies that
\[
[((\tilde{\mu}_{t},\tilde{\sigma}_{t}),\mathscr{\tilde{D}}_{t})]=[((\mu_{t},\sigma_{t}),\mathscr{D}_{t})]\in H_{\mathrm{ResLD}}^{2}(\mathfrak{g};\mathfrak{g}).
\]

The proof is complete.
\end{proof}
\begin{definition} A 1-parameter formal deformation $((\mu_{t},\sigma_{t}),\mathscr{D}_{t})$
of a ResLieDer pair $((\mathfrak{g},[2]),\mathscr{D})$ is said to
be trivial if it is equivalent to $(([\cdot,\cdot]_{\mathfrak{g}},[2]),\mathscr{D})$,
that is, there exists a formal isomorphism $\pi_{t}=\sum_{i\geq0}\pi_{i}t^{i}:\mathfrak{g}[[t]]\rightarrow\mathfrak{g}[[t]]$,
where $\pi_{i}\in C_{*2}^{1}(\mathfrak{g};\mathfrak{g})$ and $\pi_{0}=\mathrm{Id}$,
such that
\begin{align*}
\pi_{t}\text{\textopenbullet}\mu_{t}(x,y) & =[\pi_{t}(x),\pi_{t}(y)]_{\mathfrak{g}},\\
\pi_{t}\text{\textopenbullet}\sigma_{t}(x) & =(\pi_{t}(x))^{[2]},\\
\pi_{t}\text{\textopenbullet}\mathscr{D}_{t}(x) & =\mathscr{D}\text{\textopenbullet}\pi_{t}(x),
\end{align*}
for all $x,y\in\mathfrak{g}$.

\end{definition}

\begin{definition} A ResLieDer pair $((\mathfrak{g},[2]),\mathscr{D})$
is said to be rigid if every 1-parameter formal deformation of $((\mathfrak{g},[2]),\mathscr{D})$
is trivial.

\end{definition}

\begin{theorem} If $H_{\mathrm{ResLD}}^{2}(\mathfrak{g};\mathfrak{g})=0$,
then the ResLieDer pair $((\mathfrak{g},[2]),\mathscr{D})$ is rigid.

\end{theorem}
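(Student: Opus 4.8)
The plan is to show that any $1$-parameter formal deformation $((\mu_t,\sigma_t),\mathscr D_t)$ can be killed inductively by a formal isomorphism $\pi_t$, so that it becomes equal to the trivial deformation $(([\cdot,\cdot]_{\mathfrak g},[2]),\mathscr D)$. The engine for this is the standard degree-by-degree argument: one proves by induction on $n\geq 1$ that the deformation is equivalent to one whose first nonzero perturbation term occurs in degree $\geq n+1$, and one uses $H^2_{\mathrm{ResLD}}(\mathfrak g;\mathfrak g)=0$ at each step to absorb the degree-$n$ term.

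First I would set up the induction. Suppose after finitely many changes of the form $\pi_t'\text{\textopenbullet}(-)$ we have arrived at a deformation $((\mu_t,\sigma_t),\mathscr D_t)$ with $\mu_i=0$, $\sigma_i=0$, $\mathscr D_i=0$ for $1\leq i\leq n-1$, so that the lowest-order correction is $((\mu_n,\sigma_n),\mathscr D_n)$. Extracting the coefficient of $t^n$ from the six deformation equations (exactly as in the proof of Proposition 4.4, but now all intermediate-degree cross terms vanish by the induction hypothesis) shows that $((\mu_n,\sigma_n),\mathscr D_n)\in Z^2_{\mathrm{ResLD}}(\mathfrak g;\mathfrak g)$ is a $2$-cocycle. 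Since $H^2_{\mathrm{ResLD}}(\mathfrak g;\mathfrak g)=0$, there is a $1$-cochain $\pi_n\in C^1_{*2}(\mathfrak g;\mathfrak g)=\mathrm{Hom}(\mathfrak g;\mathfrak g)$ with
\[
((\mu_n,\sigma_n),\mathscr D_n)=\vartheta^1(\pi_n)=((d^1\pi_n,\omega_{\pi_n}),\delta\pi_n),
\]
that is, $\mu_n(x,y)=\pi_n([x,y]_{\mathfrak g})+[x,\pi_n(y)]_{\mathfrak g}+[y,\pi_n(x)]_{\mathfrak g}$, $\sigma_n(x)=\pi_n(x^{[2]})+[x,\pi_n(x)]_{\mathfrak g}$ and $\mathscr D_n(x)=\mathscr D(\pi_n(x))+\pi_n(\mathscr D(x))$.

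Next I would apply the formal isomorphism $\pi_t=\mathrm{Id}-\pi_n t^n$ (invertible over $\mathbb F[[t]]$ since it is $\mathrm{Id}$ plus higher order) and transport the deformation along it, exactly as in the equivalence computation preceding Definition 4.7. A direct expansion shows that the transported deformation $((\tilde\mu_t,\tilde\sigma_t),\tilde{\mathscr D}_t)$ again has vanishing terms in degrees $1,\dots,n-1$, and that its degree-$n$ term is
\[
\tilde\mu_n=\mu_n-d^1\pi_n=0,\qquad \tilde\sigma_n=\sigma_n-\omega_{\pi_n}=0,\qquad \tilde{\mathscr D}_n=\mathscr D_n-\delta\pi_n=0
\]
(signs are irrelevant in characteristic $2$). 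Thus the first possibly-nonzero perturbation has been pushed to degree $\geq n+1$, completing the inductive step. Composing all these isomorphisms $\pi_t^{(1)},\pi_t^{(2)},\ldots$ — which converge in the $t$-adic topology because the $n$-th factor is $\mathrm{Id}$ modulo $t^n$ — yields a single formal isomorphism carrying the original deformation to $(([\cdot,\cdot]_{\mathfrak g},[2]),\mathscr D)$; hence the deformation is trivial in the sense of Definition 4.8, and $((\mathfrak g,[2]),\mathscr D)$ is rigid.

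**The main obstacle** is bookkeeping rather than conceptual: one must verify carefully that conjugating by $\mathrm{Id}-\pi_n t^n$ affects the degree-$n$ coefficients of all three structures $(\mu_t,\sigma_t,\mathscr D_t)$ precisely by subtracting $d^1\pi_n$, $\omega_{\pi_n}$, $\delta\pi_n$ respectively and leaves degrees $<n$ untouched — in particular that the nonlinear $2$-mapping equation behaves correctly under the substitution $\sigma_t\mapsto \pi_t\text{\textopenbullet}\sigma_t\text{\textopenbullet}\pi_t^{-1}$, which is where the characteristic-$2$ identities $(x+y)^{[2]}=x^{[2]}+y^{[2]}+[x,y]_{\mathfrak g}$ and $(ax)^{[2]}=a^2x^{[2]}$ and the compatibility condition for $\mathscr D$ are used. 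Once that lemma-level computation is in hand, the induction and the convergence of the infinite composition are routine.
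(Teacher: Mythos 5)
Your proposal is correct and follows essentially the same route as the paper: use the vanishing of $H_{\mathrm{ResLD}}^{2}(\mathfrak{g};\mathfrak{g})$ to write the lowest-order term as $\vartheta^{1}(\pi_{n})$, conjugate by the formal isomorphism $\mathrm{Id}+\pi_{n}t^{n}$ to push the perturbation to higher degree, and iterate. The only difference is cosmetic: you make explicit the induction and the $t$-adic convergence of the composite isomorphism, which the paper compresses into ``by repeating this argument.''
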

\begin{proof}
Let $((\mu_{t},\sigma_{t}),\mathscr{D}_{t})$ be a 1-parameter formal
deformation of $((\mathfrak{g},[2]),\mathscr{D})$. Then, according
to Proposition 4.3, we have $((\mu_{1},\sigma_{1}),\mathscr{D}_{1})\in Z_{\mathrm{ResLD}}^{2}(\mathfrak{g};\mathfrak{g})$.
It follows from $H_{\mathrm{ResLD}}^{2}(\mathfrak{g};\mathfrak{g})=0$
that there exists a 1-cochain $\pi_{1}\in C_{\mathrm{ResLD}}^{1}(\mathfrak{g};\mathfrak{g})$
such that
\begin{equation}
((\mu_{1},\sigma_{1}),\mathscr{D}_{1})=\vartheta^{1}(\pi_{1}).
\end{equation}

Let $\pi_{t}=\mathrm{Id}+\pi_{1}t$, then there exists a 1-parameter
formal deformation $((\tilde{\mu}_{t},\tilde{\sigma}_{t}),\mathscr{\tilde{D}}_{t})$
of the ResLieDer pair $((\mathfrak{g},[2]),\mathscr{D})$ such that
\begin{align*}
\tilde{\mu}_{t}(x,y) & =\pi_{t}^{-1}\text{\textopenbullet}\mu_{t}(\pi_{t}(x),\pi_{t}(y)),\\
\tilde{\sigma}_{t}(x) & =\pi_{t}^{-1}\text{\textopenbullet}\sigma_{t}(\pi_{t}(x)),\\
\mathscr{\tilde{D}}_{t}(x) & =\pi_{t}^{-1}\text{\textopenbullet}\mathscr{D}_{t}(\pi_{t}(x)),
\end{align*}
where $\pi_{t}^{-1}=\sum_{i\geq0}\pi_{1}^{i}t^{i}$. Therefore, the
deformation $((\tilde{\mu}_{t},\tilde{\sigma}_{t}),\mathscr{\tilde{D}}_{t})$
is equivalent to $((\mu_{t},\sigma_{t}),\mathscr{D}_{t})$. Furthermore,
we have

\begin{align*}
\tilde{\mu}_{t}(x,y) & =(\mathrm{Id}+\pi_{1}t+\pi_{1}^{2}t^{2}+\cdots+\pi_{1}^{i}t^{i}+\cdots)(\mu_{t}(x+\pi_{1}(x)t,y+\pi_{1}(y)t)),\\
\tilde{\sigma}_{t}(x) & =(\mathrm{Id}+\pi_{1}t+\pi_{1}^{2}t^{2}+\cdots+\pi_{1}^{i}t^{i}+\cdots)(\sigma_{t}(x+\pi_{1}(x)t)),\\
\mathscr{\tilde{D}}_{t}(x) & =(\mathrm{Id}+\pi_{1}t+\pi_{1}^{2}t^{2}+\cdots+\pi_{1}^{i}t^{i}+\cdots)(\mathscr{D}_{t}(x+\pi_{1}(x)t)).
\end{align*}
It follows that
\begin{align*}
\tilde{\mu}_{t}(x,y) & =[x,y]_{\mathfrak{g}}+(\mu_{1}(x,y)+[x,\pi_{1}(y)]_{\mathfrak{g}}+[\pi_{1}(x),y]_{\mathfrak{g}}+\pi_{1}([x,y]_{\mathfrak{g}}))t+\tilde{\mu}_{2}(x,y)t^{2}+\cdots,\\
\tilde{\sigma}_{t}(x) & =x^{[2]}+(\sigma_{1}(x)+\pi_{1}(x^{[2]})+[x,\pi_{1}(x)]_{\mathfrak{g}})t+\tilde{\sigma}_{2}(x)t^{2}+\cdots,\\
\mathscr{\tilde{D}}_{t}(x) & =\mathscr{D}(x)+(\mathscr{D}_{1}(x)+\mathscr{D}(\pi_{1}(x))+\pi_{1}(\mathscr{D}(x)))t+\mathscr{\tilde{D}}_{2}(x)t^{2}+\cdots.
\end{align*}
According to the Eq. (4.5), we obtain
\begin{align*}
\tilde{\mu}_{t}(x,y) & =[x,y]_{\mathfrak{g}}+\tilde{\mu}_{2}(x,y)t^{2}+\cdots,\\
\tilde{\sigma}_{t}(x) & =x^{[2]}+\tilde{\sigma}_{2}(x)t^{2}+\cdots,\\
\mathscr{\tilde{D}}_{t}(x) & =\mathscr{D}(x)+\mathscr{\tilde{D}}_{2}(x)t^{2}+\cdots.
\end{align*}

By repeating this argument, it can be shown that the deformation $((\mu_{t},\sigma_{t}),\mathscr{D}_{t})$
is equivalent to $(([\cdot,\cdot]_{\mathfrak{g}},[2]),\mathscr{D})$.
It implies that the 1-parameter formal deformation $((\mu_{t},\sigma_{t}),\mathscr{D}_{t})$
is trivial and the ResLieDer pair $((\mathfrak{g},[2]),\mathscr{D})$
is rigid.
\end{proof}

\subsection{Deformations of order $n$ of a ResLieDer pair}

In this subsection, a cohomology class will be defined to study the
deformations of order $n$ of a ResLieDer pair $((\mathfrak{g},[2]),\mathscr{D})$.
It is called a obstruction class of a deformation of order $n$ being
extensible.

\begin{definition} A deformation of order $n$ of a ResLieDer pair
$((\mathfrak{g},[2]),\mathscr{D})$ is a pair $((\mu_{t},\sigma_{t}),\mathscr{D}_{t})$
such that $\mu_{t}=\sum_{i=0}^{n}\mu_{i}t^{i}$, $\sigma_{t}=\sum_{i=0}^{n}\sigma_{i}t^{i}$
and $\mathscr{D}_{t}=\sum_{i=0}^{n}\mathscr{D}_{i}t^{i}$ endow the
$\mathbb{F}[[t]]/(t^{n+1})$-module $\mathfrak{g}[[t]]/(t^{n+1})$
the ResLieDer pair structure with $((\mu_{0},\sigma_{0}),\mathscr{D}_{0})=(([\cdot,\cdot]_{\mathfrak{g}},[2]),\mathscr{D})$.

\end{definition}

\begin{definition} Let $((\mu_{t},\sigma_{t}),\mathscr{D}_{t})$
be a deformation of order $n$ of a ResLieDer pair $((\mathfrak{g},[2]),\mathscr{D})$.
It is said to be extensible if there exists a 2-cochain $((\mu_{n+1},\sigma_{n+1}),\mathscr{D}_{n+1})\in C_{\mathrm{ResLD}}^{2}(\mathfrak{g};\mathfrak{g})$
such that the pair $((\tilde{\mu}_{t},\tilde{\sigma}_{t}),\mathscr{\tilde{D}}_{t})$
with

\[
\tilde{\mu}_{t}=\mu_{t}+\mu_{n+1}t^{n+1},\quad\tilde{\sigma}_{t}=\sigma_{t}+\sigma_{n+1}t^{n+1},\quad\tilde{\mathscr{D}}_{t}=\mathscr{D}_{t}+\mathscr{D}_{n+1}t^{n+1}
\]
is a deformation of order $n+1$ of the ResLieDer pair $((\mathfrak{g},[2]),\mathscr{D})$.

\end{definition}

Let $((\mu_{t},\sigma_{t}),\mathscr{D}_{t})$ be a deformation of
order $n$ of the ResLieDer pair $((\mathfrak{g},[2]),\mathscr{D})$.
We define a 3-cochain $((\mathrm{Ob}_{(\mu_{t},\sigma_{t},\mathscr{D}_{t})\mathrm{I}}^{3},\mathrm{Ob}_{(\mu_{t},\sigma_{t},\mathscr{D}_{t})\mathrm{II}}^{3}),(\mathrm{Ob}_{(\mu_{t},\sigma_{t},\mathscr{D}_{t})\mathrm{I}}^{2},\mathrm{Ob}_{(\mu_{t},\sigma_{t},\mathscr{D}_{t})\mathrm{II}}^{2}))\in C_{\mathrm{ResLD}}^{3}(\mathfrak{g};\mathfrak{g})$
as follows

\begin{align*}
\mathrm{Ob}_{(\mu_{t},\sigma_{t},\mathscr{D}_{t})\mathrm{I}}^{3}(x,y,z) & =\sum_{\substack{i+j=n+1\\
i,j>0
}
}(\mu_{i}(x,\mu_{j}(y,z))+\mu_{i}(y,\mu_{j}(z,x))+\mu_{i}(z,\mu_{j}(x,y))),\\
\mathrm{Ob}_{(\mu_{t},\sigma_{t},\mathscr{D}_{t})\mathrm{II}}^{3}(x,y) & =\sum_{\substack{i+j=n+1\\
i,j>0
}
}(\mu_{i}(x,\sigma_{j}(y))+\mu_{i}(x,\mu_{j}(x,y))),\\
\mathrm{Ob}_{(\mu_{t},\sigma_{t},\mathscr{D}_{t})\mathrm{I}}^{2}(x,y) & =\sum_{\substack{i+j=n+1\\
i,j>0
}
}(\mathscr{D}_{i}(\mu_{j}(x,y))+\mu_{j}(x,\mathscr{D}_{i}(y))+\mu_{j}(y,\mathscr{D}_{i}(x))),\\
\mathrm{Ob}_{(\mu_{t},\sigma_{t},\mathscr{D}_{t})\mathrm{II}}^{2}(x) & =\sum_{\substack{i+j=n+1\\
i,j>0
}
}(\mathscr{D}_{i}(\sigma_{j}(x))+\mu_{j}(x,\mathscr{D}_{i}(x))).
\end{align*}

In the following, the 3-cochain $((\mathrm{Ob}_{(\mu_{t},\sigma_{t},\mathscr{D}_{t})\mathrm{I}}^{3},\mathrm{Ob}_{(\mu_{t},\sigma_{t},\mathscr{D}_{t})\mathrm{II}}^{3}),(\mathrm{Ob}_{(\mu_{t},\sigma_{t},\mathscr{D}_{t})\mathrm{I}}^{2},\mathrm{Ob}_{(\mu_{t},\sigma_{t},\mathscr{D}_{t})\mathrm{II}}^{2}))$
will be simply written as $((\mathrm{Ob}_{\mathrm{I}}^{3},\mathrm{Ob}_{\mathrm{II}}^{3}),(\mathrm{Ob}_{\mathrm{I}}^{2},\mathrm{Ob}_{\mathrm{II}}^{2}))$.

Since $((\mu_{t},\sigma_{t}),\mathscr{D}_{t})$ is a deformation of
order $n$ of the ResLieDer pair $((\mathfrak{g},[2]),\mathscr{D})$,
for $0\leq k\leq n$, we have

\begin{align}
\sum_{\substack{i+j=k\\
i,j\geq0
}
}(\mu_{i}(x,\mu_{j}(y,z))+\mu_{i}(y,\mu_{j}(z,x))+\mu_{i}(z,\mu_{j}(x,y))) & =0,\\
\sum_{\substack{i+j=k\\
i,j\geq0
}
}(\mu_{i}(x,\sigma_{j}(y))+\mu_{i}(x,\mu_{j}(x,y))) & =0,\\
\sum_{\substack{i+j=k\\
i,j\geq0
}
}(\mathscr{D}_{i}(\mu_{j}(x,y))+\mu_{j}(x,\mathscr{D}_{i}(y))+\mu_{j}(y,\mathscr{D}_{i}(x))) & =0,\\
\sum_{\substack{i+j=k\\
i,j\geq0
}
}(\mathscr{D}_{i}(\sigma_{j}(x))+\mu_{j}(x,\mathscr{D}_{i}(x))) & =0.
\end{align}

\begin{definition} Let $((\mu_{t},\sigma_{t}),\mathscr{D}_{t})$
be a deformation of order $n$ of a ResLieDer pair $((\mathfrak{g},[2]),\mathscr{D})$.
The cohomology class $[((\mathrm{Ob_{I}^{3},}\mathrm{Ob_{II}^{3}),(}\mathrm{Ob_{I}^{2},}\mathrm{Ob_{II}^{2}}))]\in H_{\mathrm{ResLD}}^{3}(\mathfrak{g};\mathfrak{g})$
is called the obstruction class of $((\mu_{t},\sigma_{t}),\mathscr{D}_{t})$
being extensible.

\end{definition}

\begin{theorem} Let $((\mu_{t},\sigma_{t}),\mathscr{D}_{t})$ be
a deformation of order $n$ of a ResLieDer pair $((\mathfrak{g},[2]),\mathscr{D})$.
Then the deformation $((\mu_{t},\sigma_{t}),\mathscr{D}_{t})$ is
extensible if and only if the obstruction class $[((\mathrm{Ob_{I}^{3},}\mathrm{Ob_{II}^{3}),(}\mathrm{Ob_{I}^{2},}\mathrm{Ob_{II}^{2}}))]$
is trivial.

\end{theorem}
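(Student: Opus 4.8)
The plan is to show that extensibility of the order-$n$ deformation $((\mu_t,\sigma_t),\mathscr{D}_t)$ is equivalent to the solvability of the system of equations obtained by collecting the coefficients of $t^{n+1}$ in the six defining equations of a ResLieDer pair, and that this system, after isolating the terms involving the new cochain $((\mu_{n+1},\sigma_{n+1}),\mathscr{D}_{n+1})$, reads precisely
\[
\vartheta^2\bigl((\mu_{n+1},\sigma_{n+1}),\mathscr{D}_{n+1}\bigr)=\bigl((\mathrm{Ob}_{\mathrm I}^3,\mathrm{Ob}_{\mathrm{II}}^3),(\mathrm{Ob}_{\mathrm I}^2,\mathrm{Ob}_{\mathrm{II}}^2)\bigr).
\]
Granting this identity, the deformation is extensible iff the right-hand side lies in $B_{\mathrm{ResLD}}^3(\mathfrak g;\mathfrak g)$; and since (as I will also verify) $((\mathrm{Ob}_{\mathrm I}^3,\mathrm{Ob}_{\mathrm{II}}^3),(\mathrm{Ob}_{\mathrm I}^2,\mathrm{Ob}_{\mathrm{II}}^2))$ is always a $3$-cocycle, this is the same as saying its cohomology class in $H_{\mathrm{ResLD}}^3(\mathfrak g;\mathfrak g)$ vanishes, which is the assertion.

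Here is how I would organize the steps. \textbf{Step 1.} Write $\tilde\mu_t=\mu_t+\mu_{n+1}t^{n+1}$, $\tilde\sigma_t=\sigma_t+\sigma_{n+1}t^{n+1}$, $\tilde{\mathscr D}_t=\mathscr D_t+\mathscr D_{n+1}t^{n+1}$ and substitute into each of the six ResLieDer-pair axioms modulo $t^{n+2}$. Because $((\mu_t,\sigma_t),\mathscr{D}_t)$ already satisfies these axioms modulo $t^{n+1}$ by Definition 4.10, and the equations \eqref{...} through \eqref{...} (the displayed relations (4.6)--(4.9) and their $[2]$-analogues) give the vanishing of the $t^k$-coefficient for $0\le k\le n$, the only new constraint is the vanishing of the $t^{n+1}$-coefficient. \textbf{Step 2.} In that coefficient, separate the terms in which exactly one factor carries the index $n+1$: those terms, using $\mu_0=[\cdot,\cdot]_{\mathfrak g}$, $\sigma_0=[2]$, $\mathscr D_0=\mathscr D$, assemble exactly into $d^2\mu_{n+1}$, $d^2\sigma_{n+1}$, $d^1\mathscr D_{n+1}+\delta\mu_{n+1}$, and $\omega_{\mathscr D_{n+1}}+\delta\sigma_{n+1}$ — i.e.\ into $\vartheta^2((\mu_{n+1},\sigma_{n+1}),\mathscr{D}_{n+1})$, by direct comparison with the formulas for $d^2$, $d^1$, $\delta$ and $\omega_{(\cdot)}$ from Section 3. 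The remaining terms, where both factors carry strictly positive indices summing to $n+1$, are by definition $((\mathrm{Ob}_{\mathrm I}^3,\mathrm{Ob}_{\mathrm{II}}^3),(\mathrm{Ob}_{\mathrm I}^2,\mathrm{Ob}_{\mathrm{II}}^2))$. This yields the boxed identity. \textbf{Step 3.} Prove $\vartheta^3$ kills the obstruction cochain. The cleanest route is to observe that the full expressions $\sum_{i+j=n+1,\,i,j\ge0}(\cdots)$ (including $i$ or $j$ equal to $0$) would all vanish \emph{if} the order-$(n+1)$ deformation existed; in general these full sums equal $\vartheta^2((\mu_{n+1},\sigma_{n+1}),\mathscr D_{n+1})+((\mathrm{Ob}_{\mathrm I}^3,\ldots))$, and applying the associativity/Jacobi-type bookkeeping to the partial sums with $i,j>0$ — exactly the kind of cancellation used to prove $\vartheta^3\vartheta^2=0$ in Theorem 3.3 — shows $\vartheta^3$ annihilates them. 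Concretely I would expand $\vartheta^3((\mathrm{Ob}_{\mathrm I}^3,\mathrm{Ob}_{\mathrm{II}}^3),(\mathrm{Ob}_{\mathrm I}^2,\mathrm{Ob}_{\mathrm{II}}^2))$, substitute the definitions, and use relations (4.6)--(4.9) for $k\le n$ together with $\partial\delta=\delta\partial$ (Lemma 3.2) to collapse everything to $0$.

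\textbf{Step 4.} Conclude: if $((\mu_t,\sigma_t),\mathscr D_t)$ is extensible, Step 1--2 give a $2$-cochain whose $\vartheta^2$ equals the obstruction cochain, so the class is trivial; conversely, if the class is trivial, pick such a $2$-cochain $((\mu_{n+1},\sigma_{n+1}),\mathscr D_{n+1})$, and Step 1--2 run backwards show that the extended triple satisfies all six axioms modulo $t^{n+2}$, hence is a deformation of order $n+1$. I expect \textbf{the main obstacle to be Step 3}, the verification that the obstruction cochain is a genuine cocycle: it is the one place where a nontrivial multi-term cancellation — the graded analogue of the Jacobi identity interacting with the $[2]$-map axiom and the restricted-derivation axiom in characteristic $2$ — has to be carried out by hand, and one must be careful that the ``$\omega$'' components (the $\mathrm{Ob}_{\mathrm{II}}$ pieces, which are not multilinear) transform correctly under $\delta$ and $d^{k}$. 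Steps 1, 2 and 4 are essentially bookkeeping once the sign-free characteristic-$2$ formulas are written out.
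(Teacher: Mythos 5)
Your proposal follows essentially the same route as the paper's proof: split the $t^{n+1}$-coefficient of each ResLieDer axiom into the $i,j>0$ part (the obstruction cochain) and the part with one index zero (which assembles into $\vartheta^{2}((\mu_{n+1},\sigma_{n+1}),\mathscr{D}_{n+1})$), and read off both implications from the resulting identity. Your Step 3 (checking that the obstruction cochain is actually a $3$-cocycle, so that its class in $H_{\mathrm{ResLD}}^{3}(\mathfrak{g};\mathfrak{g})$ is well defined) is a point the paper silently omits in both Definition 4.12 and the proof, so including it is extra diligence rather than a divergence of method.
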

\begin{proof}
Suppose that $((\mu_{t},\sigma_{t}),\mathscr{D}_{t})$ is extensible.
We extend $((\mu_{t},\sigma_{t}),\mathscr{D}_{t})$ to the deformation
of order $n+1$. Then Eqs. (4.6-4.9) hold for $k=n+1$. It follows
that

\begin{align*}
0 & =\sum_{\substack{i+j=n+1\\
i,j\geq0
}
}(\mu_{i}(x,\mu_{j}(y,z))+\mu_{i}(y,\mu_{j}(z,x))+\mu_{i}(z,\mu_{j}(x,y)))\\
 & =\mathrm{Ob_{I}^{3}}(x,y,z)+\sum_{\substack{i+j=n+1\\
i,j=0
}
}(\mu_{i}(x,\mu_{j}(y,z))+\mu_{i}(y,\mu_{j}(z,x))+\mu_{i}(z,\mu_{j}(x,y)))\\
 & =\mathrm{Ob_{I}^{3}}(x,y,z)+d^{2}\mu_{n+1}(x,y,z),
\end{align*}

\begin{align*}
0 & =\sum_{\substack{i+j=n+1\\
i,j\geq0
}
}(\mu_{i}(x,\sigma_{j}(y))+\mu_{i}(x,\mu_{j}(x,y)))\\
 & =\mathrm{Ob_{II}^{3}}(x,y)+\sum_{\substack{i+j=n+1\\
i,j=0
}
}(\mu_{i}(x,\sigma_{j}(y))+\mu_{i}(x,\mu_{j}(x,y)))\\
 & =\mathrm{Ob_{II}^{3}}(x,y)+d^{2}\sigma_{n+1}(x,y),
\end{align*}

\begin{align*}
0 & =\sum_{\substack{i+j=n+1\\
i,j\geq0
}
}(\mathscr{D}_{i}(\mu_{j}(x,y))+\mu_{j}(x,\mathscr{D}_{i}(y))+\mu_{j}(y,\mathscr{D}_{i}(x)))\\
 & =\mathrm{Ob_{I}^{2}}(x,y)+\sum_{\substack{i+j=n+1\\
i,j=0
}
}(\mathscr{D}_{i}(\mu_{j}(x,y))+\mu_{j}(x,\mathscr{D}_{i}(y))+\mu_{j}(y,\mathscr{D}_{i}(x)))\\
 & =\mathrm{Ob_{I}^{2}}(x,y)+(d^{1}\mathscr{D}_{n+1}(x,y)+\delta\mu_{n+1}(x,y)),
\end{align*}

\begin{align*}
0 & =\sum_{\substack{i+j=n+1\\
i,j\geq0
}
}(\mathscr{D}_{i}(\sigma_{j}(x))+\mu_{j}(x,\mathscr{D}_{i}(x)))\\
 & =\mathrm{Ob_{II}^{2}}(x)+\sum_{\substack{i+j=n+1\\
i,j=0
}
}(\mathscr{D}_{i}(\sigma_{j}(x))+\mu_{j}(x,\mathscr{D}_{i}(x)))\\
 & =\mathrm{Ob_{II}^{2}}(x)+(\sigma_{\mathscr{D}_{n+1}}(x)+\delta\sigma_{n+1}(x)).
\end{align*}
Therefore, we have
\[
\mathrm{Ob_{I}^{3}}=d^{2}\mu_{n+1},\quad\mathrm{Ob_{II}^{3}=d^{2}\sigma_{n+1}},\quad\mathrm{Ob_{I}^{2}}=d^{1}\mathscr{D}_{n+1}+\delta\mu_{n+1},\quad\mathrm{Ob_{II}^{2}}=\sigma_{\mathscr{D}_{n+1}}+\delta\sigma_{n+1}.
\]
It implies that
\[
((\mathrm{Ob_{I}^{3},}\mathrm{Ob_{II}^{3}),(}\mathrm{Ob_{I}^{2},}\mathrm{Ob_{II}^{2}}))=\vartheta^{2}((\mu_{n+1},\sigma_{n+1}),\mathscr{D}_{n+1})\in B_{\mathrm{ResLD}}^{3}(\mathfrak{g};\mathfrak{g}).
\]

Therefore, the obstruction class $[(\mathrm{(Ob_{I}^{3},}\mathrm{Ob_{II}^{3}),(}\mathrm{Ob_{I}^{2},}\mathrm{Ob_{II}^{2}}))]$
is trivial.

Conversely, suppose that the obstruction class $[((\mathrm{Ob_{I}^{3},}\mathrm{Ob_{II}^{3}),(}\mathrm{Ob_{I}^{2},}\mathrm{Ob_{II}^{2}}))]$
is trivial. Then there exists a 2-cochain $((\mu_{n+1},\sigma_{n+1}),\mathscr{D}_{n+1})$
such that
\[
((\mathrm{Ob_{I}^{3},}\mathrm{Ob_{II}^{3}),(}\mathrm{Ob_{I}^{2},}\mathrm{Ob_{II}^{2}}))=\vartheta^{2}((\mu_{n+1},\sigma_{n+1}),\mathscr{D}_{n+1}).
\]

Let $((\tilde{\mu}_{t},\tilde{\sigma}_{t}),\mathscr{\tilde{D}}_{t})=((\mu_{t}+\mu_{n+1}t^{n+1},\sigma_{t}+\sigma_{n+1}t^{n+1}),\mathscr{D}_{t}+\mathscr{D}_{n+1}t^{n+1})$.
Then $((\tilde{\mu}_{t},\tilde{\sigma}_{t}),\mathscr{\tilde{D}}_{t})$
satisfies Eqs. (4.6-4.9) for $0\leq k\leq n+1$ and it is a deformation
of order $n+1$. It implies that $((\mu_{t},\sigma_{t}),\mathscr{D}_{t})$
is extensible.

The proof is complete.
\end{proof}
\begin{corollary} If $H_{\mathrm{ResLD}}^{3}(\mathfrak{g};\mathfrak{g})=0$,
then every 2-cocycle in $C_{\mathrm{ResLD}}^{2}(\mathfrak{g};\mathfrak{g})$
is the infinitesimal of some 1-parameter formal deformation of the
ResLieDer pair $((\mathfrak{g},[2]),\mathscr{D})$.

\end{corollary}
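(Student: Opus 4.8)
The plan is to build the desired deformation one order at a time, invoking Theorem 4.13 at each stage to pass from order $n$ to order $n+1$; the engine driving everything is that the obstruction to each extension lives in $H_{\mathrm{ResLD}}^{3}(\mathfrak{g};\mathfrak{g})$, which is assumed to vanish.

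First I would identify what a deformation of order $1$ really is. Fix a $2$-cocycle $((\mu_{1},\sigma_{1}),\mathscr{D}_{1})\in Z_{\mathrm{ResLD}}^{2}(\mathfrak{g};\mathfrak{g})$ and put $((\mu_{0},\sigma_{0}),\mathscr{D}_{0})=(([\cdot,\cdot]_{\mathfrak{g}},[2]),\mathscr{D})$. For $k=0$ the equations (4.6)--(4.9) are nothing but the ResLieDer-pair axioms for $((\mathfrak{g},[2]),\mathscr{D})$ and hold automatically; for $k=1$, after keeping in each sum only the two terms $(i,j)=(0,1)$ and $(1,0)$, they unfold into exactly the four cocycle equations (4.1)--(4.4), i.e.\ into the single condition $\vartheta^{2}((\mu_{1},\sigma_{1}),\mathscr{D}_{1})=0$ (here one uses $\rho=\mathrm{ad}$ and $\eta_{V}=\mathscr{D}$ for the adjoint representation, and the fact that $\mu_{1}\colon\land^{2}\mathfrak{g}\to\mathfrak{g}$ is alternating, so that in characteristic $2$ the terms $\mu_{1}(x,[x,y]_{\mathfrak{g}})$ and $\mu_{1}([x,y]_{\mathfrak{g}},x)$ agree). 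Hence $\mu_{t}=\mu_{0}+\mu_{1}t$, $\sigma_{t}=\sigma_{0}+\sigma_{1}t$, $\mathscr{D}_{t}=\mathscr{D}_{0}+\mathscr{D}_{1}t$ is a deformation of order $1$ of $((\mathfrak{g},[2]),\mathscr{D})$ whose infinitesimal is the given cocycle.

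Next I would run the induction. Assume a deformation $((\mu_{t},\sigma_{t}),\mathscr{D}_{t})$ of order $n$ extending the $1$-jet above has been constructed. Its obstruction class lies in $H_{\mathrm{ResLD}}^{3}(\mathfrak{g};\mathfrak{g})=0$, hence is trivial, so by Theorem 4.13 there is a $2$-cochain $((\mu_{n+1},\sigma_{n+1}),\mathscr{D}_{n+1})\in C_{\mathrm{ResLD}}^{2}(\mathfrak{g};\mathfrak{g})$ for which $\mu_{t}+\mu_{n+1}t^{n+1}$, $\sigma_{t}+\sigma_{n+1}t^{n+1}$, $\mathscr{D}_{t}+\mathscr{D}_{n+1}t^{n+1}$ is a deformation of order $n+1$; note that the lower-order coefficients are left unchanged. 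Iterating over all $n\geq1$ produces power series $\mu_{t}=\sum_{i\geq0}\mu_{i}t^{i}$, $\sigma_{t}=\sum_{i\geq0}\sigma_{i}t^{i}$, $\mathscr{D}_{t}=\sum_{i\geq0}\mathscr{D}_{i}t^{i}$ all of whose truncations modulo $t^{n+1}$ are deformations of order $n$. Since we work with formal power series there is no convergence issue, and because the defining equations of a $1$-parameter formal deformation (those listed after Definition 4.1) thus hold modulo $t^{n+1}$ for every $n$, they hold identically; so $((\mu_{t},\sigma_{t}),\mathscr{D}_{t})$ is a $1$-parameter formal deformation of $((\mathfrak{g},[2]),\mathscr{D})$ whose coefficient of $t$ is the prescribed $2$-cocycle.

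The inductive step is a direct citation of Theorem 4.13 and the passage to the limit is routine bookkeeping, so the only place genuine care is required --- and the step I would regard as the real content --- is the first one: verifying that the order-$1$ deformation equations are \emph{verbatim} the $2$-cocycle equations of $C_{\mathrm{ResLD}}^{2}(\mathfrak{g};\mathfrak{g})$. This amounts to tracking the characteristic-$2$ identifications (absence of signs, the alternating property of $\mu_{1}$) when matching, for instance, Eq.\ (4.2) against the $k=1$ instance of (4.7), and analogously for the remaining three equations.
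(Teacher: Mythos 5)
The paper states this corollary without proof, and your argument supplies exactly the intended one: a $2$-cocycle is the same thing as a deformation of order $1$ (your characteristic-$2$ matching of Eqs.\ (4.1)--(4.4) with the $k=1$ instances of (4.6)--(4.9) is the right check, and the remaining $2$-mapping semilinearity conditions are automatic from $(\mu_{1},\sigma_{1})\in C_{*2}^{2}(\mathfrak{g};\mathfrak{g})$), after which Theorem 4.13 and the vanishing of $H_{\mathrm{ResLD}}^{3}(\mathfrak{g};\mathfrak{g})$ let you extend order by order without disturbing the $t$-coefficient. The proposal is correct and follows the approach the paper clearly intends.
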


\section{Central extensions of a ResLieDer pair}

In this section, central extensions of a ResLieDer pair will be studied.
It will be shown that the central extensions of a ResLieDer pair $((\mathfrak{g},[2]),\mathscr{D})$
are controlled by the second cohomology of $((\mathfrak{g},[2]),\mathscr{D})$
with values in the trivial representation.

\begin{definition} (see \cite{EF}) Let $(\mathfrak{g},[p]_{\mathfrak{g}})$
be a restricted Lie algebra and $(\mathfrak{h},[p]_{\mathfrak{h}})$
a strongly abelian restricted Lie algebra. A restricted extension
of $\mathfrak{g}$ by $\mathfrak{h}$ is an exact sequence

\[
0\longrightarrow\mathfrak{h}\overset{i}{\longrightarrow}\hat{\mathfrak{g}}\overset{\theta}{\longrightarrow}\mathfrak{\mathfrak{g}}\longrightarrow0
\]
of restricted Lie algebras and restricted Lie algebra morphisms. Such
an extension is called central if $[\mathfrak{h},\hat{\mathfrak{g}}]_{\hat{\mathfrak{g}}}=0$.

\end{definition}

In \cite{EF}, it has been pointed that a restricted extension of
$\mathfrak{g}$ by $\mathfrak{h}$ gives $\mathfrak{h}$ the structure
of a $\mathfrak{g}$-module by the action $x\cdot h=[\hat{x},h]$,
where $\hat{x}\in\hat{\mathfrak{g}}$ is any element satisfying $\theta(\hat{x})=x$.
Moreover, if $[\mathfrak{h},\hat{\mathfrak{g}}]_{\hat{\mathfrak{g}}}=0$,
then $\mathfrak{h}$ is a trivial $\mathfrak{g}$-module.

In the following, a similar extension of a ResLieDer pair $((\mathfrak{g},[2]),\mathscr{D})$
will be introduced. And we can obtain a trivial representation of
$((\mathfrak{g},[2]),\mathscr{D})$ by this extension.

\begin{definition} Let $((\mathfrak{h},[2]_{\mathfrak{h}}),\mathscr{D}_{\mathfrak{h}})$
be a strongly abelian ResLieDer pair and $((\mathfrak{g},[2]_{\mathfrak{g}}),\mathscr{D}_{\mathfrak{g}})$
a ResLieDer pair. An exact sequence of ResLieDer pair morphisms
\begin{align*}
0 & \longrightarrow\mathfrak{h}\overset{i}{\longrightarrow}\hat{\mathfrak{g}}\overset{\theta}{\longrightarrow}\mathfrak{\mathfrak{g}}\longrightarrow0\\
 & \begin{array}{ccccc}
\mathscr{D}_{\mathfrak{h}}\downarrow & \mathscr{D}_{\mathfrak{\hat{g}}}\downarrow & \mathscr{D}_{\mathfrak{g}}\downarrow &  & \begin{array}{ccccc}
\end{array}\end{array}\\
0 & \longrightarrow\mathfrak{h}\underset{i}{\longrightarrow}\hat{\mathfrak{g}}\underset{\theta}{\longrightarrow}\mathfrak{\mathfrak{g}}\longrightarrow0
\end{align*}
is called a central extension of $((\mathfrak{g},[2]_{\mathfrak{g}}),\mathscr{D}_{\mathfrak{g}})$
by $((\mathfrak{h},[2]_{\mathfrak{h}}),\mathscr{D}_{\mathfrak{h}})$
if $[\mathfrak{h},\hat{\mathfrak{g}}]_{\hat{\mathfrak{g}}}=0$.

\end{definition}

If we identify $\mathfrak{h}$ with the corresponding $2$-subalgebra
of $\hat{\mathfrak{g}}$, then $x^{[2]_{\hat{\mathfrak{g}}}}=x^{[2]_{\mathfrak{h}}}$
and $\mathscr{D}_{\hat{\mathfrak{g}}}(x)=\mathscr{D}_{\mathfrak{h}}(x)$,
$\forall x\in\mathfrak{h}$.

\begin{definition} A section of a central extension $((\hat{\mathfrak{g}},[2]_{\mathfrak{\hat{g}}}),\mathscr{D}_{\mathfrak{\hat{g}}})$
of $((\mathfrak{g},[2]_{\mathfrak{g}}),\mathscr{D}_{\mathfrak{g}})$
by $((\mathfrak{h},[2]_{\mathfrak{h}}),\mathscr{D}_{\mathfrak{h}})$
is a linear map $s:\mathfrak{g}\rightarrow\hat{\mathfrak{g}}$ such
that $\theta\text{\textopenbullet}s=\mathrm{Id}$.

\end{definition}

\begin{definition} Let $((\hat{\mathfrak{g}}_{1},[2]_{\mathfrak{\hat{g}}_{1}}),\mathscr{D}_{\mathfrak{\hat{g}}_{1}})$
and $((\hat{\mathfrak{g}}_{2},[2]_{\mathfrak{\hat{g}}_{2}}),\mathscr{D}_{\mathfrak{\hat{g}}_{2}})$
be central extensions of $((\mathfrak{g},[2]_{\mathfrak{g}}),\mathscr{D}_{\mathfrak{g}})$
by $((\mathfrak{h},[2]_{\mathfrak{h}}),\mathscr{D}_{\mathfrak{h}})$.
They are said to be isomorphic if there exists a ResLieDer pair morphism
$\pi:((\hat{\mathfrak{g}}_{1},[2]_{\mathfrak{\hat{g}}_{1}}),\mathscr{D}_{\mathfrak{\hat{g}}_{1}})\rightarrow((\hat{\mathfrak{g}}_{2},[2]_{\mathfrak{\hat{g}}_{2}}),\mathscr{D}_{\mathfrak{\hat{g}}_{2}})$
such that the following diagram commutes

\begin{align*}
0 & \longrightarrow((\mathfrak{h},[2]_{\mathfrak{h}}),\mathscr{D}_{\mathfrak{h}})\overset{i_{1}}{\longrightarrow}((\hat{\mathfrak{g}}_{1},[2]_{\mathfrak{\hat{g}}_{1}}),\mathscr{D}_{\mathfrak{\hat{g}}_{1}})\overset{\theta_{1}}{\longrightarrow}((\mathfrak{g},[2]_{\mathfrak{g}}),\mathscr{D}_{\mathfrak{g}})\longrightarrow0\\
 & \begin{array}{ccccc}
 &  &  &  & \begin{array}{ccccc}
\parallel &  &  &  & \begin{array}{ccc}
\end{array}\begin{array}{cccccc}
\begin{array}{cccc}
 &  &  & \pi\downarrow\end{array} &  &  &  &  & \begin{array}{ccc}
\end{array}\begin{array}{ccccccc}
 &  & \parallel\end{array}\end{array}\end{array}\end{array}\\
0 & \longrightarrow((\mathfrak{h},[2]_{\mathfrak{h}}),\mathscr{D}_{\mathfrak{h}})\underset{i_{2}}{\longrightarrow}((\hat{\mathfrak{g}}_{2},[2]_{\mathfrak{\hat{g}}_{2}}),\mathscr{D}_{\mathfrak{\hat{g}}_{2}})\underset{\theta_{2}}{\longrightarrow}((\mathfrak{g},[2]_{\mathfrak{g}}),\mathscr{D}_{\mathfrak{g}})\longrightarrow0.
\end{align*}

\end{definition}

Let $(\mathfrak{\hat{g}},[2]_{\mathfrak{\hat{g}}},\mathscr{D}_{\hat{\mathfrak{g}}})$
be a central extension of a ResLieDer pair $((\mathfrak{g},[2]_{\mathfrak{g}}),\mathscr{D}_{\mathfrak{g}})$
by a strongly abelian ResLieDer pair $((\mathfrak{h},[2]_{\mathfrak{h}}),\mathscr{D}_{\mathfrak{h}})$
and $s:\mathfrak{g}\rightarrow\hat{\mathfrak{g}}$ a section. We define
a bilinear map $\psi:\mathfrak{g}\land\mathfrak{g}\rightarrow\mathfrak{h}$
and two maps $\varsigma,\tau:\mathfrak{g}\rightarrow\mathfrak{h}$
respectively by

\begin{align*}
\psi(x,y) & =[s(x),s(y)]_{\hat{\mathfrak{g}}}+s([x,y]_{\mathfrak{g}}),\;\forall x,y\in\mathfrak{g},\\
\varsigma(x) & =s(x)^{[2]_{\hat{\mathfrak{g}}}}+s(x^{[2]_{\mathfrak{g}}}),\;\forall x\in\mathfrak{g},\\
\tau(x) & =\mathscr{D}_{\mathfrak{\hat{g}}}(s(x))+s(\mathscr{D}_{\mathfrak{g}}(x)),\;\forall x\in\mathfrak{g}.
\end{align*}

Clearly, $\hat{\mathfrak{g}}$ is isomorphic to $\mathfrak{g}\oplus\mathfrak{h}$
as vector spaces. Moreover, the induced bracket and two maps on $\mathfrak{g}\oplus\mathfrak{h}$
are given by
\begin{align}
[x+h_{1},y+h_{2}]_{\psi} & =[x,y]_{\mathfrak{g}}+\psi(x,y),\;\forall x,y\in\mathfrak{g},h_{1},h_{2}\in\mathfrak{h},\\
(x+h)^{[2]_{\varsigma}} & =x^{[2]_{\mathfrak{g}}}+\varsigma(x),\;\forall x\in\mathfrak{g},h\in\mathfrak{h},\\
\mathscr{D}_{\tau}(x+h) & =\mathscr{D}_{\mathfrak{g}}(x)+\tau(x)+\mathscr{D}_{\mathfrak{h}}(h),\;\forall x\in\mathfrak{g},h\in\mathfrak{h}.
\end{align}

With these notations, we have

\begin{proposition} The pair $((\mathfrak{g}\oplus\mathfrak{h},[2]_{\varsigma}),\mathscr{D}_{\tau})$
with Eqs. (5.1)-(5.3) is a ResLieDer pair if and only if $((\psi,\varsigma),\tau)$
is a 2-cocycle of the ResLieDer pair $((\mathfrak{g},[2]_{\mathfrak{g}}),\mathscr{D}_{\mathfrak{g}})$
with values in the trivial representation $(\mathfrak{h},0,\mathscr{D}_{\mathfrak{h}})$,
that is, $((\psi,\varsigma),\tau)$ satisfies the following equations:
\begin{align}
\psi(x,[y,z]_{\mathfrak{g}})+\psi(y,[z,x]_{\mathfrak{g}})+\psi(z,[x,y]_{\mathfrak{g}}) & =0,\\
\psi(x^{[2]_{\mathfrak{g}}},y)+\psi([x,y]_{\mathfrak{g}},x) & =0,\\
\tau([x,y]_{\mathfrak{g}})+\mathscr{D}_{\mathfrak{h}}(\psi(x,y))+\psi(\mathscr{D}_{\mathfrak{g}}(x),y)+\psi(x,\mathscr{D}_{\mathfrak{g}}(y)) & =0,\\
\tau(x^{[2]_{\mathfrak{g}}})+\mathscr{D}_{\mathfrak{h}}(\varsigma(x))+\psi(x,\mathscr{D}_{\mathfrak{g}}(x)) & =0.
\end{align}

\end{proposition}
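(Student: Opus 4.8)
The plan is to verify directly the four conditions that make $(\mathfrak{g}\oplus\mathfrak{h},[2]_{\varsigma},\mathscr{D}_{\tau})$, equipped with the operations (5.1)--(5.3), a ResLieDer pair, and to show that each is equivalent --- given that $((\mathfrak{g},[2]_{\mathfrak{g}}),\mathscr{D}_{\mathfrak{g}})$ is already a ResLieDer pair and $((\mathfrak{h},[2]_{\mathfrak{h}}),\mathscr{D}_{\mathfrak{h}})$ is strongly abelian --- to exactly one of Eqs. (5.4)--(5.7). The computational engine throughout is to decompose every element of $\mathfrak{g}\oplus\mathfrak{h}$ into its $\mathfrak{g}$-part and its $\mathfrak{h}$-part and to use repeatedly that, by (5.1), the bracket $[\cdot,\cdot]_{\psi}$ ignores the $\mathfrak{h}$-parts (this is the central condition $[\mathfrak{h},\hat{\mathfrak{g}}]_{\hat{\mathfrak{g}}}=0$), that $\mathfrak{h}$ is strongly abelian so $[\mathfrak{h},\mathfrak{h}]_{\mathfrak{h}}=0$ and $\mathfrak{h}^{[2]_{\mathfrak{h}}}=0$, and that $\psi,\varsigma,\tau$ are $\mathfrak{h}$-valued.

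First I would dispose of the conditions that hold automatically. We have $[w,w]_{\psi}=0$ since $\psi$ is alternating; the semilinearity $\varsigma(ax)=a^{2}\varsigma(x)$ and the additivity $\varsigma(x+y)=\varsigma(x)+\varsigma(y)+\psi(x,y)$ follow from the defining formulas for $\varsigma$ and $\psi$ together with the $2$-mapping axioms in $\hat{\mathfrak{g}}$ and $\mathfrak{g}$, so that $(\psi,\varsigma)\in C_{*2}^{2}(\mathfrak{g};\mathfrak{h})$; and $\tau$ is linear while $\mathscr{D}_{\tau}$ is additive and $\mathbb{F}$-linear. These are precisely the statements needed for $((\psi,\varsigma),\tau)$ to be a $2$-cochain, and they also take care of the $2$-mapping axioms $(aw)^{[2]_{\varsigma}}=a^{2}w^{[2]_{\varsigma}}$ and $(w_{1}+w_{2})^{[2]_{\varsigma}}=w_{1}^{[2]_{\varsigma}}+w_{2}^{[2]_{\varsigma}}+[w_{1},w_{2}]_{\psi}$.

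Then I would run through the remaining four axioms. \emph{(Jacobi identity for $[\cdot,\cdot]_{\psi}$.)} Expanding $[x+h_{1},[y+h_{2},z+h_{3}]_{\psi}]_{\psi}+\circlearrowleft(x,y,z)$ via (5.1), the $\mathfrak{g}$-valued part cancels by the Jacobi identity of $\mathfrak{g}$ and the $\mathfrak{h}$-valued remainder is $\psi(x,[y,z]_{\mathfrak{g}})+\psi(y,[z,x]_{\mathfrak{g}})+\psi(z,[x,y]_{\mathfrak{g}})$, so this axiom is exactly (5.4). \emph{($2$-mapping bracket axiom.)} By (5.1)--(5.2) one gets $[(x+h)^{[2]_{\varsigma}},y+h']_{\psi}=[x^{[2]_{\mathfrak{g}}},y]_{\mathfrak{g}}+\psi(x^{[2]_{\mathfrak{g}}},y)$ and $[x+h,[x+h,y+h']_{\psi}]_{\psi}=[x,[x,y]_{\mathfrak{g}}]_{\mathfrak{g}}+\psi(x,[x,y]_{\mathfrak{g}})$; since $[2]_{\mathfrak{g}}$ is a $2$-mapping the $\mathfrak{g}$-parts agree, and (using that $\psi$ is alternating) what is left is (5.5). \emph{($\mathscr{D}_{\tau}$ is a derivation.)} Writing $\mathscr{D}_{\tau}(x+h)=\mathscr{D}_{\mathfrak{g}}(x)+(\tau(x)+\mathscr{D}_{\mathfrak{h}}(h))$ as a $\mathfrak{g}$-part plus an $\mathfrak{h}$-part, expanding $\mathscr{D}_{\tau}([x+h_{1},y+h_{2}]_{\psi})$ and $[\mathscr{D}_{\tau}(x+h_{1}),y+h_{2}]_{\psi}+[x+h_{1},\mathscr{D}_{\tau}(y+h_{2})]_{\psi}$ with (5.1) and (5.3), and using that $\mathscr{D}_{\mathfrak{g}}$ is a derivation of $\mathfrak{g}$, the $\mathfrak{g}$-parts match and the rest is (5.6). \emph{($\mathscr{D}_{\tau}$ is restricted.)} Similarly $\mathscr{D}_{\tau}((x+h)^{[2]_{\varsigma}})=\mathscr{D}_{\mathfrak{g}}(x^{[2]_{\mathfrak{g}}})+\tau(x^{[2]_{\mathfrak{g}}})+\mathscr{D}_{\mathfrak{h}}(\varsigma(x))$ while $[x+h,\mathscr{D}_{\tau}(x+h)]_{\psi}=[x,\mathscr{D}_{\mathfrak{g}}(x)]_{\mathfrak{g}}+\psi(x,\mathscr{D}_{\mathfrak{g}}(x))$, and since $\mathscr{D}_{\mathfrak{g}}$ is restricted the $\mathfrak{g}$-parts coincide, leaving (5.7).

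It then remains to identify Eqs. (5.4)--(5.7) with the vanishing of $\vartheta^{2}((\psi,\varsigma),\tau)$ in the complex of $((\mathfrak{g},[2]),\mathscr{D})$ attached to the trivial restricted representation $(\mathfrak{h},0,\mathscr{D}_{\mathfrak{h}})$: substituting $\rho=0$ and $\eta_{V}=\mathscr{D}_{\mathfrak{h}}$ into the Section~3 formulas, the four components $d^{2}\psi=0$, $d^{2}\varsigma=0$, $d^{1}\tau+\delta\psi=0$, $\omega_{\tau}+\delta\varsigma=0$ collapse to (5.4), (5.5), (5.6), (5.7) respectively, after using that $\psi$ is alternating and $[x,y]_{\mathfrak{g}}=[y,x]_{\mathfrak{g}}$ in characteristic $2$. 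The equivalence then follows: the pair is a ResLieDer pair iff all four axioms hold iff (5.4)--(5.7) hold iff $((\psi,\varsigma),\tau)$ is a $2$-cocycle. The step I expect to demand the most care is this last dictionary, where one must match the sign-free characteristic-$2$ coboundary formulas against (5.4)--(5.7) and keep straight which of the two $\mathfrak{h}$-valued components of a cochain plays the role of $\varphi$ and which plays $\omega$; everything else is a mechanical unfolding of the $\mathfrak{g}$/$\mathfrak{h}$ decomposition.
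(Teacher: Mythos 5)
Your proposal is correct and follows essentially the same route as the paper: decompose elements of $\mathfrak{g}\oplus\mathfrak{h}$, expand each of the four ResLieDer axioms using (5.1)--(5.3), observe that the $\mathfrak{g}$-parts cancel by the corresponding axioms in $\mathfrak{g}$, and identify the surviving $\mathfrak{h}$-valued identities with (5.4)--(5.7), i.e.\ with the vanishing of $\vartheta^{2}((\psi,\varsigma),\tau)$ for $\rho=0$, $\eta_{V}=\mathscr{D}_{\mathfrak{h}}$. Your explicit check of the cochain conditions (that $(\psi,\varsigma)$ really lies in $C_{*2}^{2}(\mathfrak{g};\mathfrak{h})$) is a small addition the paper leaves implicit, but it does not change the argument.
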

\begin{proof}
Suppose that $((\mathfrak{g}\oplus\mathfrak{h},[2]_{\varsigma}),\mathscr{D}_{\tau})$
is a ResLieDer pair. For any $x,y,z\in\mathfrak{g}$ and $h_{1},h_{2},h_{3}\in\mathfrak{h}$,
we have
\begin{align*}
0 & =[x+h_{1},[y+h_{2},z+h_{3}]_{\psi}]_{\psi}+[y+h_{2},[z+h_{3},x+h_{1}]_{\psi}]_{\psi}+[z+h_{3},[x+h_{1},y+h_{2}]_{\psi}]_{\psi}\\
 & =[x+h_{1},[y,z]_{\mathfrak{g}}+\psi(y,z)]_{\psi}+[y+h_{2},[z,x]_{\mathfrak{g}}+\psi(z,x)]_{\psi}+[z+h_{3},[x,y]_{\mathfrak{g}}+\psi(x,y)]_{\psi}\\
 & =[x,[y,z]_{\mathfrak{g}}]_{\mathfrak{g}}+\psi(x,[y,z]_{\mathfrak{g}})+[y,[z,x]_{\mathfrak{g}}]_{\mathfrak{g}}+\psi(y,[z,x]_{\mathfrak{g}})+[z,[x,y]_{\mathfrak{g}}]_{\mathfrak{g}}+\psi(z,[x,y]_{\mathfrak{g}})\\
 & =\psi(x,[y,z]_{\mathfrak{g}})+\psi(y,[z,x]_{\mathfrak{g}})+\psi(z,[x,y]_{\mathfrak{g}})
\end{align*}
and
\begin{align*}
0 & =[(x+h_{1})^{[2]_{\varsigma}},y+h_{2}]_{\psi}+[x+h_{1},[x+h_{1},y+h_{2}]_{\psi}]_{\psi}\\
 & =[x^{[2]_{\mathfrak{g}}}+\varsigma(x),y+h_{2}]_{\psi}+[x+h_{1},[x,y]_{\mathfrak{g}}+\psi(x,y)]_{\psi}\\
 & =[x^{[2]_{\mathfrak{g}}},y]_{\mathfrak{g}}+\psi(x^{[2]_{\mathfrak{g}}},y)+[x,[x,y]_{\mathfrak{g}}]_{\mathfrak{g}}+\psi(x,[x,y]_{\mathfrak{g}})\\
 & =\psi(x^{[2]_{\mathfrak{g}}},y)+\psi(x,[x,y]_{\mathfrak{g}}).
\end{align*}
That is, the Eqs. (5.4) and (5.5) are satisfied. It remains to prove
that the Eqs. (5.6) and (5.7) are satisfied. Indeed, for any $x,y\in\mathfrak{g},h_{1},h_{2}\in\mathfrak{h}$,
we have
\begin{align*}
0= & \mathscr{D}_{\tau}([x+h_{1},y+h_{2}]_{\psi})+[\mathscr{D}_{\tau}(x+h_{1}),y+h_{2}]_{\psi}+[x+h_{1},\mathscr{D}_{\tau}(y+h_{2})]_{\psi}\\
= & \mathscr{D}_{\tau}([x,y]_{\mathfrak{g}}+\psi(x,y))+[\mathscr{D}_{\mathfrak{g}}(x)+\tau(x)+\mathscr{D}_{\mathfrak{h}}(h_{1}),y+h_{2}]_{\psi}\\
 & +[x+h_{1},\mathscr{D}_{\mathfrak{g}}(y)+\tau(y)+\mathscr{D}_{\mathfrak{h}}(h_{2})]_{\psi}\\
= & \mathscr{D}_{\mathfrak{g}}([x,y]_{\mathfrak{g}})+\tau([x,y]_{\mathfrak{g}})+\mathscr{D}_{\mathfrak{h}}(\psi(x,y))+[\mathscr{D}_{\mathfrak{g}}(x),y]_{\mathfrak{g}}+\psi(\mathscr{D}_{\mathfrak{g}}(x),y)\\
 & +[x,\mathscr{D}_{\mathfrak{g}}(y)]_{\mathfrak{g}}+\psi(x,\mathscr{D}_{\mathfrak{g}}(y))\\
= & \tau([x,y]_{\mathfrak{g}})+\mathscr{D}_{\mathfrak{h}}(\psi(x,y))+\psi(\mathscr{D}_{\mathfrak{g}}(x),y)+\psi(x,\mathscr{D}_{\mathfrak{g}}(y))
\end{align*}
Therefore, the Eq. (5.6) is satisfied. Moreover, for any $x\in\mathfrak{g},h\in\mathfrak{h}$,
we have
\begin{align*}
0 & =\mathscr{D}_{\tau}((x+h){}^{[2]_{\varsigma}})+[x+h,\mathscr{D}_{\tau}(x+h)]_{\psi}\\
 & =\mathscr{D}_{\tau}(x{}^{[2]_{\mathfrak{g}}}+\varsigma(x))+[x+h,\mathscr{D}_{\mathfrak{g}}(x)+\tau(x)+\mathscr{D}_{\mathfrak{h}}(h)]_{\psi}\\
 & =\mathscr{D}_{\mathfrak{g}}(x^{[2]_{\mathfrak{g}}})+\tau(x^{[2]_{\mathfrak{g}}})+\mathscr{D}_{\mathfrak{h}}(\varsigma(x))+[x,\mathscr{D}_{\mathfrak{g}}(x)]_{\mathfrak{g}}+\psi(x,\mathscr{D}_{\mathfrak{g}}(x))\\
 & =\tau(x^{[2]_{\mathfrak{g}}})+\mathscr{D}_{\mathfrak{h}}(\varsigma(x))+\psi(x,\mathscr{D}_{\mathfrak{g}}(x)),
\end{align*}
that is, the Eq. (5.7) is satisfied. Therefore, $((\psi,\varsigma),\tau)$
is a 2-cocycle.

Conversely, if $((\psi,\varsigma),\tau)$ is a 2-cocycle of the ResLieDer
pair $((\mathfrak{g},[2]_{\mathfrak{g}}),\mathscr{D}_{\mathfrak{g}})$
with values in the trivial representation $(\mathfrak{h},0,\mathscr{D}_{\mathfrak{h}})$,
then
\[
\vartheta^{2}((\psi,\varsigma),\tau)=((d^{2}\psi,d^{2}\varsigma),(d^{1}\tau+\delta\psi,\varsigma_{\tau}+\delta\varsigma))=0.
\]

A direct computation yields that
\begin{align*}
[x+h_{1},[y+h_{2},z+h_{3}]_{\psi}]_{\psi}+\circlearrowleft(x+h_{1},y+h_{2},z+h_{3}) & =0,\\{}
[(x+h_{1})^{[2]_{\varsigma}},y+h_{2}]_{\psi}+[x+h_{1},[x+h_{1},y+h_{2}]_{\psi}]_{\psi} & =0,\\
\mathscr{D}_{\tau}([x+h_{1},y+h_{2}]_{\psi})+[\mathscr{D}_{\tau}(x+h_{1}),y+h_{2}]_{\psi}+[x+h_{1},\mathscr{D}_{\tau}(y+h_{2})]_{\psi} & =0,\\
\mathscr{D}_{\tau}((x+h){}^{[2]_{\varsigma}})+[x+h,\mathscr{D}_{\tau}(x+h)]_{\psi} & =0,
\end{align*}
for any $x,y,z\in\mathfrak{g}$, $h_{1},h_{2},h_{3}\in\mathfrak{h}$.
Therefore, $((\mathfrak{g}\oplus\mathfrak{h},[2]_{\varsigma}),\mathscr{D}_{\tau})$
defined by Eqs. (5.1)-(5.3) is a ResLieDer pair.

The proof is complete.
\end{proof}
\begin{lemma} Let $((\mathfrak{\hat{g}},[2]_{\hat{\mathfrak{g}}}),\mathscr{D}_{\mathfrak{\hat{g}}})$
be a central extension of a ResLieDer pair $((\mathfrak{g},[2]_{\mathfrak{g}}),\mathscr{D}_{\mathfrak{g}})$
by a strongly abelian ResLieDer pair $((\mathfrak{h},[2]_{\mathfrak{h}}),\mathscr{D}_{\mathfrak{h}})$
and $s:\mathfrak{g}\rightarrow\hat{\mathfrak{g}}$ a section. If $((\psi,\varsigma),\tau)$
is a 2-cocycle of $((\mathfrak{g},[2]_{\mathfrak{g}}),\mathscr{D}_{\mathfrak{g}})$
with values in the trivial representation $(\mathfrak{h},0,\mathscr{D}_{\mathfrak{h}})$,
then the cohomology class of $((\psi,\varsigma),\tau)$ does not depend
on the choice of sections.

\end{lemma}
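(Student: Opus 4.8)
The plan is to realize the difference of the $2$-cocycles coming from two sections as an explicit $1$-coboundary. Let $s_{1},s_{2}:\mathfrak{g}\rightarrow\hat{\mathfrak{g}}$ be sections of the central extension, and let $((\psi_{k},\varsigma_{k}),\tau_{k})$ be the $2$-cocycle attached to $s_{k}$ by the formulas preceding Proposition 5.6, for $k=1,2$. Since $\theta\circ s_{1}=\theta\circ s_{2}=\mathrm{Id}$, the linear map $\varphi_{1}:=s_{1}-s_{2}:\mathfrak{g}\rightarrow\hat{\mathfrak{g}}$ has image in $\ker\theta=\mathfrak{h}$, hence $\varphi_{1}\in C_{\mathrm{ResLD}}^{1}(\mathfrak{g};\mathfrak{h})=\mathrm{Hom}(\mathfrak{g};\mathfrak{h})$. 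The heart of the proof is the identity
\[
((\psi_{1},\varsigma_{1}),\tau_{1})+((\psi_{2},\varsigma_{2}),\tau_{2})=\vartheta^{1}(\varphi_{1}),
\]
where $\vartheta^{1}$ is taken with respect to the trivial representation $(\mathfrak{h},0,\mathscr{D}_{\mathfrak{h}})$; once it is proved, the two $2$-cocycles represent the same class in $H_{\mathrm{ResLD}}^{2}(\mathfrak{g};\mathfrak{h})$ and the lemma follows.

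To establish the identity I would substitute $s_{1}=s_{2}+\varphi_{1}$ into the defining expressions for $\psi_{1},\varsigma_{1},\tau_{1}$ and expand, using four facts: centrality $[\mathfrak{h},\hat{\mathfrak{g}}]_{\hat{\mathfrak{g}}}=0$, so any bracket in $\hat{\mathfrak{g}}$ with a factor $\varphi_{1}(x)\in\mathfrak{h}$ vanishes; the characteristic-$2$ addition formula $(u+v)^{[2]_{\hat{\mathfrak{g}}}}=u^{[2]_{\hat{\mathfrak{g}}}}+v^{[2]_{\hat{\mathfrak{g}}}}+[u,v]_{\hat{\mathfrak{g}}}$; the strong abelianness of $\mathfrak{h}$, which gives $\varphi_{1}(x)^{[2]_{\hat{\mathfrak{g}}}}=\varphi_{1}(x)^{[2]_{\mathfrak{h}}}=0$ (here one uses that $\mathfrak{h}$ is a $2$-subalgebra of $\hat{\mathfrak{g}}$); and $\mathscr{D}_{\hat{\mathfrak{g}}}|_{\mathfrak{h}}=\mathscr{D}_{\mathfrak{h}}$. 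Carrying this out term by term yields $\psi_{1}(x,y)+\psi_{2}(x,y)=\varphi_{1}([x,y]_{\mathfrak{g}})=(d^{1}\varphi_{1})(x,y)$ (the $\rho$-terms being absent since $\rho=0$), $\varsigma_{1}(x)+\varsigma_{2}(x)=\varphi_{1}(x^{[2]_{\mathfrak{g}}})=\omega_{\varphi_{1}}(x)$, and $\tau_{1}(x)+\tau_{2}(x)=\mathscr{D}_{\mathfrak{h}}(\varphi_{1}(x))+\varphi_{1}(\mathscr{D}_{\mathfrak{g}}(x))=(\delta\varphi_{1})(x)$, which are precisely the three slots of the claimed equality.

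I expect no genuine obstacle: the computation is a direct expansion. The only delicate point is the $\varsigma$-slot, where one must use \emph{both} parts of the strongly abelian hypothesis on $\mathfrak{h}$ (the vanishing of $[\mathfrak{h},\mathfrak{h}]_{\mathfrak{h}}$ is already contained in centrality, but $\mathfrak{h}^{[2]_{\mathfrak{h}}}=0$ is needed on its own to kill $\varphi_{1}(x)^{[2]_{\hat{\mathfrak{g}}}}$) together with the characteristic-$2$ addition law for the $2$-mapping on $\hat{\mathfrak{g}}$. A minor bookkeeping check, which I would include, is that the sums $\psi_{1}+\psi_{2}$, $\varsigma_{1}+\varsigma_{2}$, $\tau_{1}+\tau_{2}$ again form a genuine element of $C_{\mathrm{ResLD}}^{2}(\mathfrak{g};\mathfrak{h})$, i.e. that $(\varsigma_{1}+\varsigma_{2})(x+y)=(\varsigma_{1}+\varsigma_{2})(x)+(\varsigma_{1}+\varsigma_{2})(y)+(\psi_{1}+\psi_{2})(x,y)$ and $(\varsigma_{1}+\varsigma_{2})(ax)=a^{2}(\varsigma_{1}+\varsigma_{2})(x)$, which is immediate from linearity of $s_{1},s_{2}$ and the $2$-mapping relations.
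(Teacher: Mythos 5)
Your proposal is correct and follows essentially the same route as the paper: set $\kappa=s_{1}-s_{2}$ (your $\varphi_{1}$), observe it lands in $\mathfrak{h}$, and show the difference of the two $2$-cocycles equals $\vartheta^{1}\kappa=((d^{1}\kappa,\omega_{\kappa}),\delta\kappa)$ slot by slot. Your extra care in isolating exactly which hypotheses kill which terms (centrality for the brackets, $\mathfrak{h}^{[2]}=0$ together with the characteristic-$2$ addition law for the $\varsigma$-slot, and $\mathscr{D}_{\hat{\mathfrak{g}}}|_{\mathfrak{h}}=\mathscr{D}_{\mathfrak{h}}$ for the $\tau$-slot) only makes explicit what the paper's computation uses implicitly.
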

\begin{proof}
Let $s_{1},s_{2}:\mathfrak{g}\rightarrow\hat{\mathfrak{g}}$ be sections.
The corresponding 2-cocycles are $((\psi_{1},\varsigma_{1}),\tau_{1})$
and $((\psi_{2},\varsigma_{2}),\tau_{2})$, respectively. We define
a linear map $\kappa:\mathfrak{g}\rightarrow\mathfrak{h}$ by $\kappa=s_{1}-s_{2}$.
Then we have
\begin{align*}
\psi_{1}(x,y) & =[s_{1}(x),s_{1}(y)]_{\hat{\mathfrak{g}}}+s_{1}([x,y]_{\mathfrak{g}})\\
 & =[\kappa(x)+s_{2}(x),\kappa(y)+s_{2}(y)]_{\hat{\mathfrak{g}}}+\kappa([x,y]_{\mathfrak{g}})+s_{2}([x,y]_{\mathfrak{g}})\\
 & =[s_{2}(x),s_{2}(y)]_{\hat{\mathfrak{g}}}+s_{2}([x,y]_{\mathfrak{g}})+\kappa([x,y]_{\mathfrak{g}})\\
 & =\psi_{2}(x,y)+d^{1}\kappa(x,y),
\end{align*}

\begin{align*}
\varsigma_{1}(x) & =s_{1}(x)^{[2]_{\hat{\mathfrak{g}}}}+s_{1}(x^{[2]_{\mathfrak{g}}})=(\kappa(x)+s_{2}(x))^{[2]_{\hat{\mathfrak{g}}}}+\kappa(x^{[2]_{\mathfrak{g}}})+s_{2}(x^{[2]_{\mathfrak{g}}})\\
 & =s_{2}(x)^{[2]_{\hat{\mathfrak{g}}}}+s_{2}(x^{[2]_{\mathfrak{g}}})+\kappa(x^{[2]_{\mathfrak{g}}})=\varsigma_{2}(x)+\omega_{\kappa}(x)
\end{align*}
and
\begin{align*}
\tau_{1}(x) & =\mathscr{D}_{\hat{\mathfrak{g}}}(s_{1}(x))+s_{1}(\mathscr{D}_{\mathfrak{g}}(x))=\mathscr{D}_{\hat{\mathfrak{g}}}(\kappa(x)+s_{2}(x))+\kappa(\mathscr{D}_{\mathfrak{g}}(x))+s_{2}(\mathscr{D}_{\mathfrak{g}}(x))\\
 & =\mathscr{D}_{\mathfrak{h}}(\kappa(x))+\mathscr{D}_{\hat{\mathfrak{g}}}(s_{2}(x))+\kappa(\mathscr{D}_{\mathfrak{g}}(x))+s_{2}(\mathscr{D}_{\mathfrak{g}}(x))=\tau_{2}(x)+\delta\kappa(x).
\end{align*}
It follows that $((\psi_{1},\varsigma_{1}),\tau_{1})-((\psi_{2},\varsigma_{2}),\tau_{2})=((d^{1}\kappa,\omega_{\kappa}),\delta\kappa)=\vartheta^{1}\kappa\in B_{\mathrm{ResLD}}^{2}(\mathfrak{g};\mathfrak{h})$.
Therefore, $((\psi_{1},\varsigma_{1}),\tau_{1})$ and $((\psi_{2},\varsigma_{2}),\tau_{2})$
are in the same cohomology class. It implies that the cohomology class
of $((\psi,\varsigma),\tau)$ does not depend on the choice of sections.
\end{proof}
\begin{theorem} Let $((\mathfrak{g},[2]_{\mathfrak{g}}),\mathscr{D}_{\mathfrak{g}})$
be a ResLieDer pair and $((\mathfrak{h},[2]_{\mathfrak{h}}),\mathscr{D}_{\mathfrak{h}})$
a strongly abelian ResLieDer pair. The central extensions of $((\mathfrak{g},[2]_{\mathfrak{g}}),\mathscr{D}_{\mathfrak{g}})$
by $((\mathfrak{h},[2]_{\mathfrak{h}}),\mathscr{D}_{\mathfrak{h}})$
are classified by the second cohomology group $H_{\mathrm{ResLD}}^{2}(\mathfrak{g};\mathfrak{h})$
of the ResLieDer pair $((\mathfrak{g},[2]_{\mathfrak{g}}),\mathscr{D}_{\mathfrak{g}})$
with values in the trivial representation $(\mathfrak{h},0,\mathscr{D}_{\mathfrak{h}})$.

\end{theorem}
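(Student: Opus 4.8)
The plan is to exhibit two mutually inverse maps between the set $\mathcal{E}$ of isomorphism classes of central extensions of $((\mathfrak{g},[2]_{\mathfrak{g}}),\mathscr{D}_{\mathfrak{g}})$ by $((\mathfrak{h},[2]_{\mathfrak{h}}),\mathscr{D}_{\mathfrak{h}})$ and the group $H_{\mathrm{ResLD}}^{2}(\mathfrak{g};\mathfrak{h})$, where $\mathfrak{h}$ carries the trivial representation $(\mathfrak{h},0,\mathscr{D}_{\mathfrak{h}})$. First I would define $\Phi\colon\mathcal{E}\to H_{\mathrm{ResLD}}^{2}(\mathfrak{g};\mathfrak{h})$. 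Given a central extension $((\hat{\mathfrak{g}},[2]_{\hat{\mathfrak{g}}}),\mathscr{D}_{\hat{\mathfrak{g}}})$, choose a section $s$ and form $((\psi,\varsigma),\tau)$ by the formulas preceding Proposition 5.6. Since $s$ identifies $\hat{\mathfrak{g}}$ with $\mathfrak{g}\oplus\mathfrak{h}$ equipped with the structure (5.1)--(5.3), the ``only if'' direction of Proposition 5.6 shows $((\psi,\varsigma),\tau)\in Z_{\mathrm{ResLD}}^{2}(\mathfrak{g};\mathfrak{h})$, and Lemma 5.8 shows its class is independent of $s$. To see $\Phi$ is well defined on isomorphism classes, I would note that if $\pi\colon\hat{\mathfrak{g}}_{1}\to\hat{\mathfrak{g}}_{2}$ is an isomorphism of central extensions and $s_{1}$ is a section of $\theta_{1}$, then $s_{2}:=\pi\circ s_{1}$ is a section of $\theta_{2}$, and since $\pi$ is a ResLieDer pair morphism restricting to the identity on $\mathfrak{h}$, the cocycle obtained from $(\hat{\mathfrak{g}}_{2},s_{2})$ literally equals the one from $(\hat{\mathfrak{g}}_{1},s_{1})$.

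Next I would define $\Psi\colon H_{\mathrm{ResLD}}^{2}(\mathfrak{g};\mathfrak{h})\to\mathcal{E}$. Given a $2$-cocycle $((\psi,\varsigma),\tau)$, the ``if'' direction of Proposition 5.6 shows $((\mathfrak{g}\oplus\mathfrak{h},[2]_{\varsigma}),\mathscr{D}_{\tau})$ with (5.1)--(5.3) is a ResLieDer pair; together with the inclusion $i\colon\mathfrak{h}\hookrightarrow\mathfrak{g}\oplus\mathfrak{h}$ and the projection $\theta\colon\mathfrak{g}\oplus\mathfrak{h}\to\mathfrak{g}$ one checks directly from (5.1)--(5.3) that $i,\theta$ are ResLieDer pair morphisms and that $[\mathfrak{h},\mathfrak{g}\oplus\mathfrak{h}]_{\psi}=0$, so this is a central extension. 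To see cohomologous cocycles give isomorphic extensions: if $((\psi_{1},\varsigma_{1}),\tau_{1})-((\psi_{2},\varsigma_{2}),\tau_{2})=\vartheta^{1}\kappa=((d^{1}\kappa,\omega_{\kappa}),\delta\kappa)$ for some $\kappa\in\mathrm{Hom}(\mathfrak{g};\mathfrak{h})$, I would define $\pi\colon\mathfrak{g}\oplus\mathfrak{h}\to\mathfrak{g}\oplus\mathfrak{h}$ by $\pi(x+h)=x+\kappa(x)+h$ and verify, using precisely the three identities $\psi_{1}=\psi_{2}+d^{1}\kappa$, $\varsigma_{1}=\varsigma_{2}+\omega_{\kappa}$, $\tau_{1}=\tau_{2}+\delta\kappa$, that $\pi$ is an isomorphism of central extensions. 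Hence $\Psi$ is well defined.

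Finally I would check $\Phi\circ\Psi=\mathrm{id}$ and $\Psi\circ\Phi=\mathrm{id}$. For $\Phi\circ\Psi$, take in the extension built from $((\psi,\varsigma),\tau)$ the canonical section $s(x)=x+0$; then (5.1)--(5.3) give back exactly $((\psi,\varsigma),\tau)$. For $\Psi\circ\Phi$, given a central extension $\hat{\mathfrak{g}}$ with section $s$ and associated cocycle $((\psi,\varsigma),\tau)$, the map $\mathfrak{g}\oplus\mathfrak{h}\to\hat{\mathfrak{g}}$, $x+h\mapsto s(x)+h$, is an isomorphism of central extensions: its compatibility with the bracket, with $[2]$ and with $\mathscr{D}$ is, term by term, the definition of $\psi$, $\varsigma$ and $\tau$. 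This yields the desired bijection $\mathcal{E}\cong H_{\mathrm{ResLD}}^{2}(\mathfrak{g};\mathfrak{h})$.

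I expect the main obstacle to be not the cocycle identities themselves (those are Proposition 5.6) but verifying that each auxiliary map ($s_{2}=\pi\circ s_{1}$, the translation $\pi(x+h)=x+\kappa(x)+h$, and the comparison map $x+h\mapsto s(x)+h$) respects the \emph{restricted} structure, i.e.\ the $2$-mapping $[2]_{\varsigma}$ and the restricted derivation $\mathscr{D}_{\tau}$, and not merely the Lie bracket. In characteristic $2$ the non-additivity of $\varsigma$ forces the $\omega_{\kappa}$ term to appear, so the $\omega$-component of the complex does genuine work here; this compatibility is the step I would carry out most carefully, the rest being formal diagram-chasing parallel to the classical Lie-algebra case.
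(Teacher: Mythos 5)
Your proposal is correct and follows essentially the same route as the paper's proof: extract the cocycle $((\psi,\varsigma),\tau)$ from a section, invoke Proposition 5.5 for the cocycle condition and Lemma 5.6 for section-independence, use the section $s_{2}=\pi\circ s_{1}$ to get isomorphism-invariance, and pass from cohomologous cocycles to isomorphic extensions via the translation map $x+h\mapsto x+\nu(x)+h$. Your explicit verification that the two assignments are mutually inverse (via the canonical section $x\mapsto x+0$ and the comparison map $x+h\mapsto s(x)+h$) is a detail the paper leaves implicit, but the construction is the same.
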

\begin{proof}
Let $((\mathfrak{\hat{g}},[2]_{\mathfrak{\hat{g}}}),\mathscr{D}_{\mathfrak{\hat{g}}})$
be a central extension of a ResLieDer pair $((\mathfrak{g},[2]_{\mathfrak{g}}),\mathscr{D}_{\mathfrak{g}})$
by a strongly abelian ResLieDer pair $((\mathfrak{h},[2]_{\mathfrak{h}}),\mathscr{D}_{\mathfrak{h}})$,
$s:\mathfrak{g}\rightarrow\hat{\mathfrak{g}}$ a section and $((\psi,\varsigma),\tau)$
the corresponding 2-cocycle. According to Lemma 5.6, the cohomology
class of $((\psi,\varsigma),\tau)$ does not depend on the choice
of sections.

Next, we are going to prove that isomorphic central extensions give
rise to the same element in $H_{\mathrm{ResLD}}^{2}(\mathfrak{g};\mathfrak{h})$.
Let $((\mathfrak{\hat{g}}_{1},[2]_{\mathfrak{\hat{g}}_{1}}),\mathscr{D}_{\mathfrak{\hat{g}}_{1}})$
and $((\mathfrak{\hat{g}}_{2},[2]_{\mathfrak{\hat{g}}_{2}}),\mathscr{D}_{\mathfrak{\hat{g}}_{2}})$
be two isomorphic central extensions of $((\mathfrak{g},[2]_{\mathfrak{g}}),\mathscr{D}_{\mathfrak{g}})$
by $((\mathfrak{h},[2]_{\mathfrak{h}}),\mathscr{D}_{\mathfrak{h}})$
and $\pi:((\mathfrak{\hat{g}}_{1},[2]_{\mathfrak{\hat{g}}_{1}}),\mathscr{D}_{\mathfrak{\hat{g}}_{1}})\rightarrow((\mathfrak{\hat{g}}_{2},[2]_{\mathfrak{\hat{g}}_{2}}),\mathscr{D}_{\mathfrak{\hat{g}}_{2}})$
a ResLieDer pair morphism such that we have the commutative diagram
in Definition 5.4.

Let $s_{1}:\mathfrak{g}\rightarrow\hat{\mathfrak{g}}_{1}$ be a section
of the central extension $((\mathfrak{\hat{g}}_{1},[2]_{\mathfrak{\hat{g}}_{1}}),\mathscr{D}_{\mathfrak{\hat{g}}_{1}})$.
It follows from $\theta_{2}\text{\textopenbullet}\pi=\theta_{1}$
that
\[
\theta_{2}\text{\textopenbullet}(\pi\text{\textopenbullet}s_{1})=(\theta_{2}\text{\textopenbullet}\pi)\text{\textopenbullet}s_{1}=\theta_{1}\text{\textopenbullet}s_{1}=\mathrm{Id}.
\]

It implies that $\pi\text{\textopenbullet}s_{1}$ is a section of
the central extension $((\mathfrak{\hat{g}}_{2},[2]_{\mathfrak{\hat{g}}_{2}}),\mathscr{D}_{\mathfrak{\hat{g}}_{2}})$.
Let $s_{2}=\pi\text{\textopenbullet}s_{1}$. Then we obtain two corresponding
2-cocycles $((\psi_{1},\varsigma_{1}),\tau_{1})$ and $((\psi_{2},\varsigma_{2}),\tau_{2})$.
Since $\pi$ is a ResLieDer pair morphism and $\pi|_{\mathfrak{h}}=\mathrm{Id}_{\mathfrak{h}}$,
we have
\begin{align*}
\psi_{2}(x,y) & =[s_{2}(x),s_{2}(y)]_{\hat{\mathfrak{g}}}+s_{2}([x,y]_{\mathfrak{g}})=[\pi(s_{1}(x)),\pi(s_{1}(y))]_{\hat{\mathfrak{g}}}+\pi(s_{1}([x,y]_{\mathfrak{g}}))\\
 & =\pi([s_{1}(x),s_{1}(y)]_{\hat{\mathfrak{g}}}+s_{1}([x,y]_{\mathfrak{g}}))=\pi|_{\mathfrak{h}}(\psi_{1}(x,y))=\psi_{1}(x,y),
\end{align*}

\begin{align*}
\varsigma_{2}(x) & =s_{2}(x)^{[2]_{\hat{\mathfrak{g}}_{2}}}+s_{2}(x^{[2]_{\mathfrak{g}}})=\pi(s_{1}(x))^{[2]_{\hat{\mathfrak{g}}_{2}}}+\pi(s_{1}(x^{[2]_{\mathfrak{g}}}))\\
 & =\pi(s_{1}(x)^{[2]_{\hat{\mathfrak{g}}_{1}}})+\pi(s_{1}(x^{[2]_{\mathfrak{g}}}))=\pi(s_{1}(x)^{[2]_{\hat{\mathfrak{g}}_{1}}}+s_{1}(x^{[2]_{\mathfrak{g}}}))\\
 & =s_{1}(x)^{[2]_{\hat{\mathfrak{g}}_{1}}}+s_{1}(x^{[2]_{\mathfrak{g}}})=\varsigma_{1}(x)
\end{align*}
and
\begin{align*}
\tau_{2}(x) & =\mathscr{D}_{\hat{\mathfrak{g}}_{2}}(s_{2}(x))+s_{2}(\mathscr{D}_{\mathfrak{g}}(x))=\mathscr{D}_{\hat{\mathfrak{g}}_{2}}(\pi(s_{1}(x)))+\pi(s_{1}(\mathscr{D}_{\mathfrak{g}}(x)))\\
 & =\pi(\mathscr{D}_{\hat{\mathfrak{g}}_{1}}(s_{1}(x)))+\pi(s_{1}(\mathscr{D}_{\mathfrak{g}}(x)))=\pi(\mathscr{D}_{\hat{\mathfrak{g}}_{1}}(s_{1}(x))+s_{1}(\mathscr{D}_{\mathfrak{g}}(x)))\\
 & =\mathscr{D}_{\hat{\mathfrak{g}}_{1}}(s_{1}(x))+s_{1}(\mathscr{D}_{\mathfrak{g}}(x))=\tau_{1}(x).
\end{align*}
It follows that $((\psi_{2},\varsigma_{2}),\tau_{2})=((\psi_{1},\varsigma_{1}),\tau_{1})$.
It implies that the isomorphic central extensions give rise to the
same element in $H_{\mathrm{ResLD}}^{2}(\mathfrak{g};\mathfrak{h})$.

Conversely, given two 2-cocycles $((\psi_{1},\varsigma_{1}),\tau_{1})$
and $((\psi_{2},\varsigma_{2}),\tau_{2})$, we can construct two central
extensions $((\mathfrak{g}\oplus\mathfrak{h},[2]_{\varsigma_{1}}),\mathscr{D}_{\tau_{1}})$
and $((\mathfrak{g}\oplus\mathfrak{h},[2]_{\varsigma_{2}}),\mathscr{D}_{\tau_{2}})$,
as in the Eqs. (5.1)-(5.3). If they represent the same cohomological
class, that is, there exists $\nu:\mathfrak{g}\rightarrow\mathfrak{h}$
such that
\[
((\psi_{1},\varsigma_{1}),\tau_{1})=((\psi_{2},\varsigma_{2}),\tau_{2})+\vartheta^{1}\nu,
\]
we define $\varrho:\mathfrak{g}\oplus\mathfrak{h}\rightarrow\mathfrak{g}\oplus\mathfrak{h}$
by
\[
\varrho(x+h)=x+\nu(x)+h.
\]
Then, it can be deduced that $\varrho$ is an isomorphism between
central extensions.

The proof is complete.
\end{proof}

\section{Extensions of a pair of restricted derivations}

\begin{definition} Let $0\longrightarrow\mathfrak{h}\overset{i}{\longrightarrow}\hat{\mathfrak{g}}\overset{\theta}{\longrightarrow}\mathfrak{\mathfrak{g}}\longrightarrow0$
be a central extension of restricted Lie algebras. A pair of restricted
derivations $(\mathscr{D}_{\mathfrak{h}},\mathscr{D}_{\mathfrak{g}})\in\mathrm{Der}^{2}(\mathfrak{h})\times\mathrm{Der}^{2}(\mathfrak{g})$
is said to be extensible if there exists a restricted derivation $\mathscr{D}_{\hat{\mathfrak{g}}}\in\mathrm{Der}^{2}(\hat{\mathfrak{g}})$
such that we have the following exact sequence of ResLieDer pair morphisms

\begin{align*}
0 & \longrightarrow\mathfrak{h}\overset{i}{\longrightarrow}\hat{\mathfrak{g}}\overset{\theta}{\longrightarrow}\mathfrak{\mathfrak{g}}\longrightarrow0\\
 & \begin{array}{ccccc}
\mathscr{D}_{\mathfrak{h}}\downarrow & \mathscr{D}_{\mathfrak{\hat{g}}}\downarrow & \mathscr{D}_{\mathfrak{g}}\downarrow &  & \begin{array}{ccccc}
\end{array}\end{array}\\
0 & \longrightarrow\mathfrak{h}\underset{i}{\longrightarrow}\hat{\mathfrak{g}}\underset{\theta}{\longrightarrow}\mathfrak{\mathfrak{g}}\longrightarrow0.
\end{align*}

Equivalently, $((\hat{\mathfrak{g}},[2]_{\hat{\mathfrak{g}}}),\mathscr{D}_{\mathfrak{\hat{g}}})$
is a central extension of $((\mathfrak{g},[2]_{\mathfrak{g}}),\mathscr{D}_{\mathfrak{g}})$
by $((\mathfrak{h},[2]_{\mathfrak{h}}),\mathscr{D}_{\mathfrak{h}})$.

\end{definition}

Let $s:\mathfrak{g}\rightarrow\hat{\mathfrak{g}}$ be any section
of the central extension $0\longrightarrow\mathfrak{h}\overset{i}{\longrightarrow}\hat{\mathfrak{g}}\overset{\theta}{\longrightarrow}\mathfrak{\mathfrak{g}}\longrightarrow0$.
Then any element $\hat{x}\in\hat{\mathfrak{g}}$ can be uniquely written
as $\hat{x}=s(x)+h$, for some $x\in\mathfrak{g}$ and $h\in\mathfrak{h}$.
Define $\psi:\mathfrak{g}\land\mathfrak{g}\rightarrow\mathfrak{h}$
and $\varsigma:\mathfrak{g}\rightarrow\mathfrak{h}$ respectively
by
\begin{align}
\psi(x,y) & =[s(x),s(y)]_{\hat{\mathfrak{g}}}+s([x,y]_{\mathfrak{g}}),\\
\varsigma(x) & =s(x)^{[2]_{\hat{\mathfrak{g}}}}+s(x^{[2]_{\mathfrak{g}}}).
\end{align}

For any pair $(\mathscr{D}_{\mathfrak{h}},\mathscr{D}_{\mathfrak{g}})\in\mathrm{Der}^{2}(\mathfrak{h})\times\mathrm{Der}^{2}(\mathfrak{g})$,
define a 2-cochain $(\mathrm{Ob}_{(\mathscr{D}_{\mathfrak{h}},\mathscr{D}_{\mathfrak{g}})\mathrm{I}}^{\hat{\mathfrak{g}}},\mathrm{Ob}_{(\mathscr{D}_{\mathfrak{h}},\mathscr{D}_{\mathfrak{g}})\mathrm{II}}^{\hat{\mathfrak{g}}})\in C_{*2}^{2}(\mathfrak{g};\mathfrak{h})$
as follows:
\begin{align}
\mathrm{Ob}_{(\mathscr{D}_{\mathfrak{h}},\mathscr{D}_{\mathfrak{g}})\mathrm{I}}^{\hat{\mathfrak{g}}}(x,y) & =\mathscr{D}_{\mathfrak{h}}(\psi(x,y))+\psi(\mathscr{D}_{\mathfrak{g}}(x),y)+\psi(x,\mathscr{D}_{\mathfrak{g}}(y)),\;\forall x,y\in\mathfrak{g},\\
\mathrm{Ob}_{(\mathscr{D}_{\mathfrak{h}},\mathscr{D}_{\mathfrak{g}})\mathrm{II}}^{\hat{\mathfrak{g}}}(x) & =\mathscr{D}_{\mathfrak{h}}(\varsigma(x))+\psi(x,\mathscr{D}_{\mathfrak{g}}(x)),\;\forall x\in\mathfrak{g}.
\end{align}

\begin{proposition} Let $0\longrightarrow\mathfrak{h}\overset{i}{\longrightarrow}\hat{\mathfrak{g}}\overset{\theta}{\longrightarrow}\mathfrak{\mathfrak{g}}\longrightarrow0$
be a central extension of restricted Lie algebras. For any pair of
restricted derivations $(\mathscr{D}_{\mathfrak{h}},\mathscr{D}_{\mathfrak{g}})\in\mathrm{Der}^{2}(\mathfrak{h})\times\mathrm{Der}^{2}(\mathfrak{g})$,
the 2-cochain $(\mathrm{Ob}_{(\mathscr{D}_{\mathfrak{h}},\mathscr{D}_{\mathfrak{g}})\mathrm{I}}^{\hat{\mathfrak{g}}},\mathrm{Ob}_{(\mathscr{D}_{\mathfrak{h}},\mathscr{D}_{\mathfrak{g}})\mathrm{II}}^{\hat{\mathfrak{g}}})\in C_{*2}^{2}(\mathfrak{g};\mathfrak{h})$
defined by the Eqs. (6.3) and (6.4) is a 2-cocycle of the restricted
Lie algebra $(\mathfrak{g},[2]_{\mathfrak{g}})$ with values in the
trivial representation $(\mathfrak{h},0)$.

Moreover, the cohomology class $[(\mathrm{Ob}_{(\mathscr{D}_{\mathfrak{h}},\mathscr{D}_{\mathfrak{g}})\mathrm{I}}^{\hat{\mathfrak{g}}},\mathrm{Ob}_{(\mathscr{D}_{\mathfrak{h}},\mathscr{D}_{\mathfrak{g}})\mathrm{II}}^{\hat{\mathfrak{g}}})]\in H_{*2}^{2}(\mathfrak{g};\mathfrak{h})$
does not depend on the choice of sections.

\end{proposition}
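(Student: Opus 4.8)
The plan is to identify the obstruction cochain with the image of the restricted Lie algebra $2$-cocycle $(\psi,\varsigma)$ under the operator $\delta$ of Section~3, and then to reduce both assertions to the commutation of $\delta$ with the coboundary operator (Lemma~3.2). Fix a section $s$. Because $[\mathfrak{h},\hat{\mathfrak{g}}]_{\hat{\mathfrak{g}}}=0$, the linear identification $\hat{\mathfrak{g}}\cong\mathfrak{g}\oplus\mathfrak{h}$ given by $s$ transports the structure of $\hat{\mathfrak{g}}$ to $[x+h_{1},y+h_{2}]=[x,y]_{\mathfrak{g}}+\psi(x,y)$ and $(x+h)^{[2]_{\hat{\mathfrak{g}}}}=x^{[2]_{\mathfrak{g}}}+\varsigma(x)$; since $\hat{\mathfrak{g}}$ is a restricted Lie algebra this forces $(\psi,\varsigma)\in C_{*2}^{2}(\mathfrak{g};\mathfrak{h})$ and $\partial^{2}(\psi,\varsigma)=0$, i.e. $\psi$ satisfies the Jacobi identity $\psi(x,[y,z]_{\mathfrak{g}})+\psi(y,[z,x]_{\mathfrak{g}})+\psi(z,[x,y]_{\mathfrak{g}})=0$ together with $\psi(x^{[2]_{\mathfrak{g}}},y)+\psi([x,y]_{\mathfrak{g}},x)=0$. (This is the classical description of restricted central extensions, see \cite{EF}; it can also be read off, verbatim, from the first two displayed computations in the proof of Proposition~5.5, which use only the bracket and the $2$-map.)

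Next I would observe that $(\mathfrak{h},0,\mathscr{D}_{\mathfrak{h}})$ is a restricted representation of the ResLieDer pair $((\mathfrak{g},[2]_{\mathfrak{g}}),\mathscr{D}_{\mathfrak{g}})$, since the conditions $\rho(x^{[2]})=\rho(x)^{2}$ and $\eta_{V}\circ\rho(x)=\rho(\mathscr{D}(x))+\rho(x)\circ\eta_{V}$ are vacuous for $\rho=0$. For this representation the operator $\delta\colon C_{*2}^{2}(\mathfrak{g};\mathfrak{h})\to C_{*2}^{2}(\mathfrak{g};\mathfrak{h})$ of Section~3 is $(\delta\psi)(x,y)=\mathscr{D}_{\mathfrak{h}}(\psi(x,y))+\psi(\mathscr{D}_{\mathfrak{g}}(x),y)+\psi(x,\mathscr{D}_{\mathfrak{g}}(y))$ and $(\delta\varsigma)(x)=\mathscr{D}_{\mathfrak{h}}(\varsigma(x))+\psi(x,\mathscr{D}_{\mathfrak{g}}(x))$, which by (6.3)--(6.4) is precisely $(\mathrm{Ob}_{(\mathscr{D}_{\mathfrak{h}},\mathscr{D}_{\mathfrak{g}})\mathrm{I}}^{\hat{\mathfrak{g}}},\mathrm{Ob}_{(\mathscr{D}_{\mathfrak{h}},\mathscr{D}_{\mathfrak{g}})\mathrm{II}}^{\hat{\mathfrak{g}}})$; in particular the obstruction cochain lies in $C_{*2}^{2}(\mathfrak{g};\mathfrak{h})$. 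Because $\mathscr{D}_{\mathfrak{g}}$ is a restricted derivation, Lemma~3.2 applies and yields $\partial^{2}(\mathrm{Ob}_{\mathrm{I}}^{\hat{\mathfrak{g}}},\mathrm{Ob}_{\mathrm{II}}^{\hat{\mathfrak{g}}})=\partial^{2}\delta(\psi,\varsigma)=\delta\,\partial^{2}(\psi,\varsigma)=\delta(0)=0$, so the obstruction cochain is a $2$-cocycle of $(\mathfrak{g},[2]_{\mathfrak{g}})$ with values in $(\mathfrak{h},0)$. (One can also prove this by brute force: expand $d^{2}\mathrm{Ob}_{\mathrm{I}}^{\hat{\mathfrak{g}}}$ and the identity $d^{2}\mathrm{Ob}_{\mathrm{II}}^{\hat{\mathfrak{g}}}=0$ and cancel all terms using the Jacobi identity for $\psi$, the relation $\psi(x^{[2]},y)+\psi([x,y],x)=0$, and the Leibniz rules $\mathscr{D}_{\mathfrak{g}}([x,y])=[\mathscr{D}_{\mathfrak{g}}(x),y]+[x,\mathscr{D}_{\mathfrak{g}}(y)]$ and $\mathscr{D}_{\mathfrak{g}}(x^{[2]})=[x,\mathscr{D}_{\mathfrak{g}}(x)]$.)

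For independence of the section I would take two sections $s_{1},s_{2}$ and set $\kappa=s_{1}-s_{2}\in\mathrm{Hom}(\mathfrak{g};\mathfrak{h})$. The computation in the proof of Lemma~5.6 gives $\psi_{1}=\psi_{2}+d^{1}\kappa$ and $\varsigma_{1}=\varsigma_{2}+\omega_{\kappa}$. Applying $\delta$ and invoking $\delta(d^{1}\kappa)=d^{1}(\delta\kappa)$ and $\delta\omega_{\kappa}=\omega_{\delta\kappa}$ from Lemma~3.2, I get $(\mathrm{Ob}_{\mathrm{I},1}^{\hat{\mathfrak{g}}},\mathrm{Ob}_{\mathrm{II},1}^{\hat{\mathfrak{g}}})=(\mathrm{Ob}_{\mathrm{I},2}^{\hat{\mathfrak{g}}},\mathrm{Ob}_{\mathrm{II},2}^{\hat{\mathfrak{g}}})+(d^{1}(\delta\kappa),\omega_{\delta\kappa})=(\mathrm{Ob}_{\mathrm{I},2}^{\hat{\mathfrak{g}}},\mathrm{Ob}_{\mathrm{II},2}^{\hat{\mathfrak{g}}})+\partial^{1}(\delta\kappa)$, so the two obstruction cocycles differ by a coboundary in $B_{*2}^{2}(\mathfrak{g};\mathfrak{h})$ and hence represent the same class in $H_{*2}^{2}(\mathfrak{g};\mathfrak{h})$.

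The only delicate point I anticipate is the bookkeeping in the first step: verifying carefully that $(\psi,\varsigma)$ is a restricted $2$-cocycle with trivial coefficients, and that the operator $\delta$ of Section~3 — originally set up for a restricted representation of a ResLieDer pair — specializes to $(\mathfrak{h},0,\mathscr{D}_{\mathfrak{h}})$ in such a way that Lemma~3.2 genuinely applies. Once those identifications are in place, everything else is purely formal.
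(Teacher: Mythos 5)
Your proposal is correct, and it takes a genuinely different (and more conceptual) route than the paper. The paper proves Proposition~6.2 by brute force: it expands $d^{2}\mathrm{Ob}_{\mathrm{I}}^{\hat{\mathfrak{g}}}$ and $d^{2}\mathrm{Ob}_{\mathrm{II}}^{\hat{\mathfrak{g}}}$ term by term and cancels everything against the cocycle identities for $(\psi,\varsigma)$ and the Leibniz rules for $\mathscr{D}_{\mathfrak{g}}$, $\mathscr{D}_{\mathfrak{h}}$, and likewise computes the difference of the two obstruction cochains for two sections directly, landing on the primitive $\mathscr{D}_{\mathfrak{h}}\text{\textopenbullet}\kappa+\kappa\text{\textopenbullet}\mathscr{D}_{\mathfrak{g}}$. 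You instead observe that $(\mathrm{Ob}_{\mathrm{I}}^{\hat{\mathfrak{g}}},\mathrm{Ob}_{\mathrm{II}}^{\hat{\mathfrak{g}}})=\delta(\psi,\varsigma)$ for the restricted representation $(\mathfrak{h},0,\mathscr{D}_{\mathfrak{h}})$, so that both claims follow in one line from Lemma~3.2: $\partial^{2}\delta(\psi,\varsigma)=\delta\partial^{2}(\psi,\varsigma)=0$, and $\delta(\partial^{1}\kappa)=\partial^{1}(\delta\kappa)$ with $\delta\kappa=\mathscr{D}_{\mathfrak{h}}\text{\textopenbullet}\kappa+\kappa\text{\textopenbullet}\mathscr{D}_{\mathfrak{g}}$ --- exactly the coboundary the paper finds by hand. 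Your identification of $\delta$ as the source of the obstruction also explains, for free, why the operator $\Phi$ at the end of Section~6 is well defined on $H_{*2}^{2}(\mathfrak{g};\mathfrak{h})$. The only point to flag is that the itemized part of Lemma~3.2 states $\delta(d^{k}\omega_{k})=d^{k}(\delta\omega_{k})$ only for $k\geq3$, whereas your first step needs the $k=2$ instance on the $\omega$-component; this case is covered by the lemma's headline claim $\partial^{n}\text{\textopenbullet}\delta=\delta\text{\textopenbullet}\partial^{n}$ for $n\geq1$ (and is already used in the paper's proof of Theorem~3.3), but since the paper does not display that verification you should either cite the headline statement explicitly or keep your brute-force fallback as the formal justification. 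With that caveat, the two proofs yield identical cocycles and identical coboundaries; yours simply trades a page of cancellations for a structural appeal to the commutation of $\delta$ with the restricted coboundary.
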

\begin{proof}
Let $s:\mathfrak{g}\rightarrow\hat{\mathfrak{g}}$ be a section of
the central extension of restricted Lie algebras $0\longrightarrow\mathfrak{h}\overset{i}{\longrightarrow}\hat{\mathfrak{g}}\overset{\theta}{\longrightarrow}\mathfrak{\mathfrak{g}}\longrightarrow0$.
According to Proposition 5.5, the pair $(\psi,\varsigma)$ is a 2-cocycle
of the restricted Lie algebra $(\mathfrak{g},[2]_{\mathfrak{g}})$
with values in the trivial representation $(\mathfrak{h},0)$, which
implies that
\[
\psi(x,[y,z]_{\mathfrak{g}})+\psi(y,[z,x]_{\mathfrak{g}})+\psi(z,[x,y]_{\mathfrak{g}})=0,
\]
\[
\psi(x^{[2]_{\mathfrak{g}}},y)+\psi([x,y]_{\mathfrak{g}},x)=0.
\]
Furthermore, for any $x,y,z\in\mathfrak{g}$, we have

\begin{align*}
 & (d^{2}\mathrm{Ob}_{(\mathscr{D}_{\mathfrak{h}},\mathscr{D}_{\mathfrak{g}})\mathrm{I}}^{\hat{\mathfrak{g}}})(x,y,z)\\
= & \mathrm{Ob}_{(\mathscr{D}_{\mathfrak{h}},\mathscr{D}_{\mathfrak{g}})\mathrm{I}}^{\hat{\mathfrak{g}}}(x,[y,z]_{\mathfrak{g}})+\mathrm{Ob}_{(\mathscr{D}_{\mathfrak{h}},\mathscr{D}_{\mathfrak{g}})\mathrm{I}}^{\hat{\mathfrak{g}}}(y,[z,x]_{\mathfrak{g}})+\mathrm{Ob}_{(\mathscr{D}_{\mathfrak{h}},\mathscr{D}_{\mathfrak{g}})\mathrm{I}}^{\hat{\mathfrak{g}}}(z,[x,y]_{\mathfrak{g}})\\
= & \mathscr{D}_{\mathfrak{h}}(\psi(x,[y,z]_{\mathfrak{g}}))+\psi(\mathscr{D}_{\mathfrak{g}}(x),[y,z]_{\mathfrak{g}})+\psi(x,\mathscr{D}_{\mathfrak{g}}([y,z]_{\mathfrak{g}}))\\
 & +\mathscr{D}_{\mathfrak{h}}(\psi(y,[z,x]_{\mathfrak{g}}))+\psi(\mathscr{D}_{\mathfrak{g}}(y),[z,x]_{\mathfrak{g}})+\psi(y,\mathscr{D}_{\mathfrak{g}}([z,x]_{\mathfrak{g}}))\\
 & +\mathscr{D}_{\mathfrak{h}}(\psi(z,[x,y]_{\mathfrak{g}}))+\psi(\mathscr{D}_{\mathfrak{g}}(z),[x,y]_{\mathfrak{g}})+\psi(z,\mathscr{D}_{\mathfrak{g}}([x,y]_{\mathfrak{g}}))\\
= & \mathscr{D}_{\mathfrak{h}}(\psi(x,[y,z]_{\mathfrak{g}})+\psi(y,[z,x]_{\mathfrak{g}})+\psi(y,[z,x]_{\mathfrak{g}}))\\
 & +(\psi(\mathscr{D}_{\mathfrak{g}}(x),[y,z]_{\mathfrak{g}})+\psi(y,[z,\mathscr{D}_{\mathfrak{g}}(x)]_{\mathfrak{g}})+\psi(z,[\mathscr{D}_{\mathfrak{g}}(x),y]_{\mathfrak{g}}))\\
 & +(\psi(\mathscr{D}_{\mathfrak{g}}(y),[z,x]_{\mathfrak{g}})+\psi(z,[x,\mathscr{D}_{\mathfrak{g}}(y)]_{\mathfrak{g}})+\psi(x,[\mathscr{D}_{\mathfrak{g}}(y),z]_{\mathfrak{g}}))\\
 & +(\psi(\mathscr{D}_{\mathfrak{g}}(z),[x,y]_{\mathfrak{g}})+\psi(x,[y,\mathscr{D}_{\mathfrak{g}}(z)]_{\mathfrak{g}})+\psi(y,[\mathscr{D}_{\mathfrak{g}}(z),x]_{\mathfrak{g}}))\\
= & 0
\end{align*}
and
\begin{align*}
 & (d^{2}\mathrm{Ob}_{(\mathscr{D}_{\mathfrak{h}},\mathscr{D}_{\mathfrak{g}})\mathrm{II}}^{\hat{\mathfrak{g}}})(x,y)\\
= & \mathrm{Ob}_{(\mathscr{D}_{\mathfrak{h}},\mathscr{D}_{\mathfrak{g}})\mathrm{I}}^{\hat{\mathfrak{g}}}(x^{[2]_{\mathfrak{g}}},y)+\mathrm{Ob}_{(\mathscr{D}_{\mathfrak{h}},\mathscr{D}_{\mathfrak{g}})\mathrm{I}}^{\hat{\mathfrak{g}}}([x,y]_{\mathfrak{g}},x)\\
= & \mathscr{D}_{\mathfrak{h}}(\psi(x^{[2]_{\mathfrak{g}}},y))+\psi(\mathscr{D}_{\mathfrak{g}}(x^{[2]_{\mathfrak{g}}}),y)+\psi(x^{[2]_{\mathfrak{g}}},\mathscr{D}_{\mathfrak{g}}(y))\\
 & +\mathscr{D}_{\mathfrak{h}}(\psi([x,y]_{\mathfrak{g}},x))+\psi(\mathscr{D}_{\mathfrak{g}}([x,y]_{\mathfrak{g}}),x)+\psi([x,y]_{\mathfrak{g}},\mathscr{D}_{\mathfrak{g}}(x))\\
= & \mathscr{D}_{\mathfrak{h}}(\psi(x^{[2]_{\mathfrak{g}}},y)+\psi([x,y]_{\mathfrak{g}},x))+(\psi(x^{[2]_{\mathfrak{g}}},\mathscr{D}_{\mathfrak{g}}(y))+\psi([x,\mathscr{D}_{\mathfrak{g}}(y)]_{\mathfrak{g}},x))\\
 & +\psi([x,\mathscr{D}_{\mathfrak{g}}(x)]_{\mathfrak{g}},y)+\psi([\mathscr{D}_{\mathfrak{g}}(x),y]_{\mathfrak{g}},x)+\psi([y,x]_{\mathfrak{g}},\mathscr{D}_{\mathfrak{g}}(x))\\
= & 0.
\end{align*}
It follows that $(\mathrm{Ob}_{(\mathscr{D}_{\mathfrak{h}},\mathscr{D}_{\mathfrak{g}})\mathrm{I}}^{\hat{\mathfrak{g}}},\mathrm{Ob}_{(\mathscr{D}_{\mathfrak{h}},\mathscr{D}_{\mathfrak{g}})\mathrm{II}}^{\hat{\mathfrak{g}}})\in Z_{*2}^{2}(\mathfrak{g};\mathfrak{h})$.

Let $s_{1},s_{2}:\mathfrak{g}\rightarrow\hat{\mathfrak{g}}$ be different
sections. And the corresponding 2-cocycles are $(\mathrm{Ob}_{(\mathscr{D}_{\mathfrak{h}},\mathscr{D}_{\mathfrak{g}})\mathrm{I}}^{\hat{\mathfrak{g}},1},\mathrm{Ob}_{(\mathscr{D}_{\mathfrak{h}},\mathscr{D}_{\mathfrak{g}})\mathrm{II}}^{\hat{\mathfrak{g}},1})$
and $(\mathrm{Ob}_{(\mathscr{D}_{\mathfrak{h}},\mathscr{D}_{\mathfrak{g}})\mathrm{I}}^{\hat{\mathfrak{g}},2},\mathrm{Ob}_{(\mathscr{D}_{\mathfrak{h}},\mathscr{D}_{\mathfrak{g}})\mathrm{II}}^{\hat{\mathfrak{g}},2})$,
respectively. Define $\kappa:\mathfrak{g}\rightarrow\mathfrak{h}$
by $\kappa=s_{1}-s_{2}$. Then we have
\begin{align*}
\psi_{1}(x,y) & =[s_{1}(x),s_{1}(y)]_{\hat{\mathfrak{g}}}+s_{1}([x,y]_{\mathfrak{g}})\\
 & =[\kappa(x)+s_{2}(x),\kappa(y)+s_{2}(y)]_{\hat{\mathfrak{g}}}+\kappa([x,y]_{\mathfrak{g}})+s_{2}([x,y]_{\mathfrak{g}})\\
 & =\psi_{2}(x,y)+\kappa([x,y]_{\mathfrak{g}})
\end{align*}
and
\begin{align*}
\varsigma_{1}(x) & =s_{1}(x)^{[2]_{\hat{\mathfrak{g}}}}+s_{1}(x^{[2]_{\mathfrak{g}}})=(\kappa(x)+s_{2}(x))^{[2]_{\hat{\mathfrak{g}}}}+\kappa(x^{[2]_{\mathfrak{g}}})+s_{1}(x^{[2]_{\mathfrak{g}}})\\
 & =s_{2}(x)^{[2]_{\hat{\mathfrak{g}}}}+s_{2}(x^{[2]_{\mathfrak{g}}})+\kappa(x^{[2]_{\mathfrak{g}}})=\varsigma_{2}(x)+\kappa(x^{[2]_{\mathfrak{g}}}).
\end{align*}
It follows that
\begin{align*}
\mathrm{Ob}_{(\mathscr{D}_{\mathfrak{h}},\mathscr{D}_{\mathfrak{g}})\mathrm{I}}^{\hat{\mathfrak{g}},1}(x,y)= & \mathscr{D}_{\mathfrak{h}}(\psi_{1}(x,y))+\psi_{1}(\mathscr{D}_{\mathfrak{g}}(x),y)+\psi_{1}(x,\mathscr{D}_{\mathfrak{g}}(y))\\
= & \mathscr{D}_{\mathfrak{h}}(\psi_{2}(x,y))+\mathscr{D}_{\mathfrak{h}}(\kappa([x,y]_{\mathfrak{g}}))+\psi_{2}(\mathscr{D}_{\mathfrak{g}}(x),y)\\
 & +\kappa([\mathscr{D}_{\mathfrak{g}}(x),y]_{\mathfrak{g}})+\psi_{2}(x,\mathscr{D}_{\mathfrak{g}}(y))+\kappa([x,\mathscr{D}_{\mathfrak{g}}(y)]_{\mathfrak{g}})\\
= & \mathrm{Ob}_{(\mathscr{D}_{\mathfrak{h}},\mathscr{D}_{\mathfrak{g}})\mathrm{I}}^{\hat{\mathfrak{g}},2}(x,y)+(\mathscr{D}_{\mathfrak{h}}\text{\textopenbullet}\kappa+\kappa\text{\textopenbullet}\mathscr{D}_{\mathfrak{g}})([x,y]_{\mathfrak{g}})\\
= & \mathrm{Ob}_{(\mathscr{D}_{\mathfrak{h}},\mathscr{D}_{\mathfrak{g}})\mathrm{I}}^{\hat{\mathfrak{g}},2}(x,y)+d^{1}(\mathscr{D}_{\mathfrak{h}}\text{\textopenbullet}\kappa+\kappa\text{\textopenbullet}\mathscr{D}_{\mathfrak{g}})(x,y)
\end{align*}
and
\begin{align*}
\mathrm{Ob}_{(\mathscr{D}_{\mathfrak{h}},\mathscr{D}_{\mathfrak{g}})\mathrm{II}}^{\hat{\mathfrak{g}},1}(x) & =\mathscr{D}_{\mathfrak{h}}(\varsigma_{1}(x))+\psi_{1}(x,\mathscr{D}_{\mathfrak{g}}(x))\\
 & =\mathscr{D}_{\mathfrak{h}}(\varsigma_{2}(x)+\kappa(x^{[2]_{\mathfrak{g}}}))+\psi_{2}(x,\mathscr{D}_{\mathfrak{g}}(x))+\kappa([x,\mathscr{D}_{\mathfrak{g}}(x)]_{\mathfrak{g}})\\
 & =\mathscr{D}_{\mathfrak{h}}(\varsigma_{2}(x))+\mathscr{D}_{\mathfrak{h}}(\kappa(x^{[2]_{\mathfrak{g}}}))+\psi_{2}(x,\mathscr{D}_{\mathfrak{g}}(x))+\kappa(\mathscr{D}_{\mathfrak{g}}(x^{[2]_{\mathfrak{g}}}))\\
 & =\mathrm{Ob}_{(\mathscr{D}_{\mathfrak{h}},\mathscr{D}_{\mathfrak{g}})\mathrm{II}}^{\hat{\mathfrak{g}},2}(x)+(\mathscr{D}_{\mathfrak{h}}\text{\textopenbullet}\kappa+\kappa\text{\textopenbullet}\mathscr{D}_{\mathfrak{g}})(x^{[2]_{\mathfrak{g}}})\\
 & =\mathrm{Ob}_{(\mathscr{D}_{\mathfrak{h}},\mathscr{D}_{\mathfrak{g}})\mathrm{II}}^{\hat{\mathfrak{g}},2}(x)+\omega_{\mathscr{D}_{\mathfrak{h}}\text{\textopenbullet}\kappa+\kappa\text{\textopenbullet}\mathscr{D}_{\mathfrak{g}}}(x).
\end{align*}
Therefore, we have
\[
(\mathrm{Ob}_{(\mathscr{D}_{\mathfrak{h}},\mathscr{D}_{\mathfrak{g}})\mathrm{I}}^{\hat{\mathfrak{g}},1},\mathrm{Ob}_{(\mathscr{D}_{\mathfrak{h}},\mathscr{D}_{\mathfrak{g}})\mathrm{II}}^{\hat{\mathfrak{g}},1})-(\mathrm{Ob}_{(\mathscr{D}_{\mathfrak{h}},\mathscr{D}_{\mathfrak{g}})\mathrm{I}}^{\hat{\mathfrak{g}},2},\mathrm{Ob}_{(\mathscr{D}_{\mathfrak{h}},\mathscr{D}_{\mathfrak{g}})\mathrm{II}}^{\hat{\mathfrak{g}},2})=\partial^{1}(\mathscr{D}_{\mathfrak{h}}\text{\textopenbullet}\kappa+\kappa\text{\textopenbullet}\mathscr{D}_{\mathfrak{g}})\in B_{*2}^{2}(\mathfrak{g};\mathfrak{h}).
\]
Thus,
\[
[(\mathrm{Ob}_{(\mathscr{D}_{\mathfrak{h}},\mathscr{D}_{\mathfrak{g}})\mathrm{I}}^{\hat{\mathfrak{g}},1},\mathrm{Ob}_{(\mathscr{D}_{\mathfrak{h}},\mathscr{D}_{\mathfrak{g}})\mathrm{II}}^{\hat{\mathfrak{g}},1})]=[(\mathrm{Ob}_{(\mathscr{D}_{\mathfrak{h}},\mathscr{D}_{\mathfrak{g}})\mathrm{I}}^{\hat{\mathfrak{g}},2},\mathrm{Ob}_{(\mathscr{D}_{\mathfrak{h}},\mathscr{D}_{\mathfrak{g}})\mathrm{II}}^{\hat{\mathfrak{g}},2})]\in H_{*2}^{2}(\mathfrak{g};\mathfrak{h}).
\]

The proof is complete.
\end{proof}
\begin{definition} The cohomology class $[(\mathrm{Ob}_{(\mathscr{D}_{\mathfrak{h}},\mathscr{D}_{\mathfrak{g}})\mathrm{I}}^{\hat{\mathfrak{g}}},\mathrm{Ob}_{(\mathscr{D}_{\mathfrak{h}},\mathscr{D}_{\mathfrak{g}})\mathrm{II}}^{\hat{\mathfrak{g}}})]\in H_{*2}^{2}(\mathfrak{g};\mathfrak{h})$
is called the obstruction class of $(\mathscr{D}_{\mathfrak{h}},\mathscr{D}_{\mathfrak{g}})$
being extensible.

\end{definition}

\begin{theorem} Let $0\longrightarrow\mathfrak{h}\overset{i}{\longrightarrow}\hat{\mathfrak{g}}\overset{\theta}{\longrightarrow}\mathfrak{\mathfrak{g}}\longrightarrow0$
be a central extension of restricted Lie algebras. Then $(\mathscr{D}_{\mathfrak{h}},\mathscr{D}_{\mathfrak{g}})\in\mathrm{Der}^{2}(\mathfrak{h})\times\mathrm{Der}^{2}(\mathfrak{g})$
is extensible if and only if the obstruction class $[(\mathrm{Ob}_{(\mathscr{D}_{\mathfrak{h}},\mathscr{D}_{\mathfrak{g}})\mathrm{I}}^{\hat{\mathfrak{g}}},\mathrm{Ob}_{(\mathscr{D}_{\mathfrak{h}},\mathscr{D}_{\mathfrak{g}})\mathrm{II}}^{\hat{\mathfrak{g}}})]\in H_{*2}^{2}(\mathfrak{g};\mathfrak{h})$
is trivial.

\end{theorem}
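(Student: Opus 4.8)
The plan is to pass, via any section, to the concrete presentation of $\hat{\mathfrak{g}}$ as $\mathfrak{g}\oplus\mathfrak{h}$ with a cocycle datum, and then invoke Proposition~5.5 to translate ``$\mathscr{D}_{\hat{\mathfrak{g}}}$ is a restricted derivation'' into ``the datum is a $2$-cocycle''. Fix a section $s\colon\mathfrak{g}\to\hat{\mathfrak{g}}$ and let $\psi,\varsigma$ be as in (6.1)--(6.2). Since $\mathfrak{h}$ is central and strongly abelian in $\hat{\mathfrak{g}}$, the linear bijection $\Phi\colon\hat{\mathfrak{g}}\to\mathfrak{g}\oplus\mathfrak{h}$, $\Phi(s(x)+h)=x+h$, is an isomorphism of restricted Lie algebras carrying $[\cdot,\cdot]_{\hat{\mathfrak{g}}}$ and $[2]_{\hat{\mathfrak{g}}}$ onto $[\cdot,\cdot]_\psi$ and $[2]_\varsigma$ of (5.1)--(5.2); moreover equations (5.4)--(5.5) for $(\psi,\varsigma)$ hold automatically, since $0\to\mathfrak{h}\to\hat{\mathfrak{g}}\to\mathfrak{g}\to0$ is a central extension of restricted Lie algebras (this is the point already recorded in the proof of Proposition~6.2). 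The key observation is that, for an arbitrary linear map $\tau\colon\mathfrak{g}\to\mathfrak{h}$, equations (5.6)--(5.7) for the triple $((\psi,\varsigma),\tau)$ are, in characteristic $2$, exactly the statements $d^1\tau=\delta\psi=\mathrm{Ob}^{\hat{\mathfrak{g}}}_{(\mathscr{D}_{\mathfrak{h}},\mathscr{D}_{\mathfrak{g}})\mathrm{I}}$ and $\omega_\tau=\delta\varsigma=\mathrm{Ob}^{\hat{\mathfrak{g}}}_{(\mathscr{D}_{\mathfrak{h}},\mathscr{D}_{\mathfrak{g}})\mathrm{II}}$ (one just reads off the formulas for $\delta$ on $2$-cochains from Section~3, the definitions (6.3)--(6.4), and the fact that $d^1\tau$ and $\omega_\tau$ against the trivial representation are $x\wedge y\mapsto\tau([x,y]_{\mathfrak{g}})$ and $x\mapsto\tau(x^{[2]_{\mathfrak{g}}})$). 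Hence, by Proposition~5.5 applied to the trivial representation $(\mathfrak{h},0,\mathscr{D}_{\mathfrak{h}})$, the pair $((\mathfrak{g}\oplus\mathfrak{h},[2]_\varsigma),\mathscr{D}_\tau)$ of (5.3) is a ResLieDer pair if and only if $(\mathrm{Ob}^{\hat{\mathfrak{g}}}_{\mathrm{I}},\mathrm{Ob}^{\hat{\mathfrak{g}}}_{\mathrm{II}})=\partial^1\tau$.

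With this equivalence in hand the theorem splits into two short arguments. For the forward direction, let $\mathscr{D}_{\hat{\mathfrak{g}}}\in\mathrm{Der}^2(\hat{\mathfrak{g}})$ extend $(\mathscr{D}_{\mathfrak{h}},\mathscr{D}_{\mathfrak{g}})$; then $\tau(x):=\mathscr{D}_{\hat{\mathfrak{g}}}(s(x))+s(\mathscr{D}_{\mathfrak{g}}(x))$ takes values in $\mathfrak{h}$ by $\theta\circ\mathscr{D}_{\hat{\mathfrak{g}}}=\mathscr{D}_{\mathfrak{g}}\circ\theta$, and, using $\mathscr{D}_{\hat{\mathfrak{g}}}\circ i=i\circ\mathscr{D}_{\mathfrak{h}}$, one checks $\Phi\circ\mathscr{D}_{\hat{\mathfrak{g}}}\circ\Phi^{-1}=\mathscr{D}_\tau$. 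As $\Phi$ is a restricted Lie algebra isomorphism, $((\mathfrak{g}\oplus\mathfrak{h},[2]_\varsigma),\mathscr{D}_\tau)$ is a ResLieDer pair, so the equivalence gives $(\mathrm{Ob}^{\hat{\mathfrak{g}}}_{\mathrm{I}},\mathrm{Ob}^{\hat{\mathfrak{g}}}_{\mathrm{II}})=\partial^1\tau\in B^2_{*2}(\mathfrak{g};\mathfrak{h})$ and the obstruction class vanishes. Conversely, if the obstruction class is trivial, pick $\tau\colon\mathfrak{g}\to\mathfrak{h}$ with $(\mathrm{Ob}^{\hat{\mathfrak{g}}}_{\mathrm{I}},\mathrm{Ob}^{\hat{\mathfrak{g}}}_{\mathrm{II}})=\partial^1\tau$; by the equivalence $((\mathfrak{g}\oplus\mathfrak{h},[2]_\varsigma),\mathscr{D}_\tau)$ is a ResLieDer pair, hence $\mathscr{D}_\tau\in\mathrm{Der}^2(\mathfrak{g}\oplus\mathfrak{h})$, and $\mathscr{D}_{\hat{\mathfrak{g}}}:=\Phi^{-1}\circ\mathscr{D}_\tau\circ\Phi$ is a restricted derivation of $\hat{\mathfrak{g}}$ satisfying $\mathscr{D}_{\hat{\mathfrak{g}}}\circ i=i\circ\mathscr{D}_{\mathfrak{h}}$ and $\theta\circ\mathscr{D}_{\hat{\mathfrak{g}}}=\mathscr{D}_{\mathfrak{g}}\circ\theta$, i.e. an extension of $(\mathscr{D}_{\mathfrak{h}},\mathscr{D}_{\mathfrak{g}})$. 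Independence of the displayed class on the choice of $s$ is Proposition~6.2, so the statement is section-free.

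I do not expect a serious obstacle: the only real content is the identification of (5.6)--(5.7) with $\partial^1\tau=(\mathrm{Ob}^{\hat{\mathfrak{g}}}_{\mathrm{I}},\mathrm{Ob}^{\hat{\mathfrak{g}}}_{\mathrm{II}})$, which is a direct unwinding of definitions. If one prefers a proof that does not route through Proposition~5.5, the alternative is to define $\mathscr{D}_{\hat{\mathfrak{g}}}(s(x)+h)=s(\mathscr{D}_{\mathfrak{g}}(x))+\tau(x)+\mathscr{D}_{\mathfrak{h}}(h)$ outright and verify, by the same cancellation of $s(-)$-valued terms in characteristic $2$ as in the proofs of Proposition~5.5 and Lemma~5.6, that it is a derivation (this uses $\mathscr{D}_{\mathfrak{g}}([x,y]_{\mathfrak{g}})=[\mathscr{D}_{\mathfrak{g}}(x),y]_{\mathfrak{g}}+[x,\mathscr{D}_{\mathfrak{g}}(y)]_{\mathfrak{g}}$ together with $d^1\tau=\mathrm{Ob}^{\hat{\mathfrak{g}}}_{\mathrm{I}}$) and a restricted one (this uses $\mathscr{D}_{\mathfrak{g}}(x^{[2]_{\mathfrak{g}}})=[x,\mathscr{D}_{\mathfrak{g}}(x)]_{\mathfrak{g}}$ together with $\omega_\tau=\mathrm{Ob}^{\hat{\mathfrak{g}}}_{\mathrm{II}}$); the forward direction is then the same computation run in reverse starting from a given $\mathscr{D}_{\hat{\mathfrak{g}}}$.
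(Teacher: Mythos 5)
Your proof is correct, and it proves the same identity the paper's proof turns on: with $\gamma(x)=\mathscr{D}_{\hat{\mathfrak{g}}}(s(x))+s(\mathscr{D}_{\mathfrak{g}}(x))$ (your $\tau$), extensibility is equivalent to $(\mathrm{Ob}_{\mathrm{I}}^{\hat{\mathfrak{g}}},\mathrm{Ob}_{\mathrm{II}}^{\hat{\mathfrak{g}}})=\partial^{1}\gamma$. The organizational difference is that the paper verifies the derivation and restrictedness identities for $\mathscr{D}_{\hat{\mathfrak{g}}}$ by direct computation in the forward direction and merely asserts ``it can be proved'' in the converse, whereas you transport everything through the isomorphism $\hat{\mathfrak{g}}\cong\mathfrak{g}\oplus\mathfrak{h}$ and invoke Proposition~5.5, observing that Eqs.~(5.4)--(5.5) hold automatically and that Eqs.~(5.6)--(5.7) are literally $d^{1}\tau=\mathrm{Ob}_{\mathrm{I}}^{\hat{\mathfrak{g}}}$ and $\omega_{\tau}=\mathrm{Ob}_{\mathrm{II}}^{\hat{\mathfrak{g}}}$ against the trivial representation. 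This buys a genuinely two-sided argument at once and, in particular, supplies the verification the paper omits in the converse direction; the only points worth making explicit in a write-up are that $\tau$ indeed takes values in $\mathfrak{h}$ (via $\theta\circ\mathscr{D}_{\hat{\mathfrak{g}}}=\mathscr{D}_{\mathfrak{g}}\circ\theta$) and that $\Phi$ carries $[2]_{\hat{\mathfrak{g}}}$ to $[2]_{\varsigma}$, which uses both centrality and the strong abelianness of $\mathfrak{h}$ in the expansion $(s(x)+h)^{[2]_{\hat{\mathfrak{g}}}}=s(x)^{[2]_{\hat{\mathfrak{g}}}}+h^{[2]_{\mathfrak{h}}}+[s(x),h]_{\hat{\mathfrak{g}}}=s(x)^{[2]_{\hat{\mathfrak{g}}}}$.
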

\begin{proof}
Let $s:\mathfrak{g}\rightarrow\hat{\mathfrak{g}}$ be a section of
the central extension $0\longrightarrow\mathfrak{h}\overset{i}{\longrightarrow}\hat{\mathfrak{g}}\overset{\theta}{\longrightarrow}\mathfrak{\mathfrak{g}}\longrightarrow0$.

Suppose that $(\mathscr{D}_{\mathfrak{h}},\mathscr{D}_{\mathfrak{g}})$
is extensible. Then there exists a restricted derivation $\mathscr{D}_{\mathfrak{\hat{g}}}\in\mathrm{Der}^{2}(\hat{\mathfrak{g}})$
such that we have the exact sequence of ResLieDer pair morphisms in
Definition 6.1. Since $\mathscr{D}_{\mathfrak{g}}\text{\textopenbullet}\theta=\theta\text{\textopenbullet}\mathscr{D}_{\mathfrak{\hat{g}}}$,
we have $\mathscr{D}_{\mathfrak{\hat{g}}}(s(x))+s(\mathscr{D}_{\mathfrak{g}}(x))\in\mathfrak{h}$.
Define $\gamma:\mathfrak{g}\rightarrow\mathfrak{h}$ by $\gamma(x)=\mathscr{D}_{\mathfrak{\hat{g}}}(s(x))+s(\mathscr{D}_{\mathfrak{g}}(x))$.
Then for $s(x)+h\in\hat{\mathfrak{g}}$, we have
\begin{align*}
\mathscr{D}_{\hat{\mathfrak{g}}}(s(x)+h) & =\mathscr{D}_{\hat{\mathfrak{g}}}(s(x))+\mathscr{D}_{\mathfrak{h}}(h)=(\mathscr{D}_{\hat{\mathfrak{g}}}(s(x))+s(\mathscr{D}_{\mathfrak{g}}(x)))+s(\mathscr{D}_{\mathfrak{g}}(x))+\mathscr{D}_{\mathfrak{h}}(h)\\
 & =s(\mathscr{D}_{\mathfrak{g}}(x))+\gamma(x)+\mathscr{D}_{\mathfrak{h}}(h).
\end{align*}

Furthermore, for any $s(x)+h_{1},s(y)+h_{2}\in\hat{\mathfrak{g}}$,
we have
\begin{align*}
\mathscr{D}_{\hat{\mathfrak{g}}}([s(x)+h_{1},s(y)+h_{2}]_{\hat{\mathfrak{g}}}) & =\mathscr{D}_{\hat{\mathfrak{g}}}(([s(x),s(y)]_{\hat{\mathfrak{g}}}+s([x,y]_{\mathfrak{g}}))+s([x,y]_{\mathfrak{g}}))\\
 & =\mathscr{D}_{\hat{\mathfrak{g}}}(\psi(x,y)+s([x,y]_{\mathfrak{g}}))\\
 & =s(\mathscr{D}_{\mathfrak{g}}([x,y]_{\mathfrak{g}}))+\gamma([x,y]_{\mathfrak{g}})+\mathscr{D}_{\mathfrak{h}}(\psi(x,y))
\end{align*}
and
\begin{align*}
 & [\mathscr{D}_{\hat{\mathfrak{g}}}(s(x)+h_{1}),s(y)+h_{2}]_{\hat{\mathfrak{g}}}+[s(x)+h_{1},\mathscr{D}_{\hat{\mathfrak{g}}}(s(y)+h_{2})]_{\hat{\mathfrak{g}}}\\
= & [s(\mathscr{D}_{\mathfrak{g}}(x))+\gamma(x)+\mathscr{D}_{\mathfrak{h}}(h_{1}),s(y)+h_{2}]_{\hat{\mathfrak{g}}}+[s(x)+h_{1},s(\mathscr{D}_{\mathfrak{g}}(y))+\gamma(y)+\mathscr{D}_{\mathfrak{h}}(h_{2})]_{\hat{\mathfrak{g}}}\\
= & [s(\mathscr{D}_{\mathfrak{g}}(x)),s(y)]_{\hat{\mathfrak{g}}}+[s(x),s(\mathscr{D}_{\mathfrak{g}}(y))]_{\hat{\mathfrak{g}}}\\
= & ([s(\mathscr{D}_{\mathfrak{g}}(x)),s(y)]_{\hat{\mathfrak{g}}}+s([\mathscr{D}_{\mathfrak{g}}(x),y]_{\mathfrak{g}}))+s([\mathscr{D}_{\mathfrak{g}}(x),y]_{\mathfrak{g}})\\
 & +([s(x),s(\mathscr{D}_{\mathfrak{g}}(y))]_{\hat{\mathfrak{g}}}+s([x,\mathscr{D}_{\mathfrak{g}}(y)]_{\mathfrak{g}}))+s([x,\mathscr{D}_{\mathfrak{g}}(y)]_{\mathfrak{g}})\\
= & \psi(\mathscr{D}_{\mathfrak{g}}(x),y)+s([\mathscr{D}_{\mathfrak{g}}(x),y]_{\mathfrak{g}})+\psi(x,\mathscr{D}_{\mathfrak{g}}(y))+s([x,\mathscr{D}_{\mathfrak{g}}(y)]_{\mathfrak{g}}).
\end{align*}
It follows from
\[
\mathscr{D}_{\hat{\mathfrak{g}}}([s(x)+h_{1},s(y)+h_{2}]_{\hat{\mathfrak{g}}})=[\mathscr{D}_{\hat{\mathfrak{g}}}(s(x)+h_{1}),s(y)+h_{2}]_{\hat{\mathfrak{g}}}+[s(x)+h_{1},\mathscr{D}_{\hat{\mathfrak{g}}}(s(y)+h_{2})]_{\hat{\mathfrak{g}}}
\]
that
\[
\mathscr{D}_{\mathfrak{h}}(\psi(x,y))+\psi(\mathscr{D}_{\mathfrak{g}}(x),y)+\psi(x,\mathscr{D}_{\mathfrak{g}}(y))=\gamma([x,y]_{\mathfrak{g}}).
\]
It implies that $\mathrm{Ob}_{(\mathscr{D}_{\mathfrak{h}},\mathscr{D}_{\mathfrak{g}})\mathrm{I}}^{\hat{\mathfrak{g}}}=d^{1}\gamma$.
For any $s(x)+h\in\hat{\mathfrak{g}}$, we have
\begin{align*}
\mathscr{D}_{\hat{\mathfrak{g}}}((s(x)+h)^{[2]_{\hat{\mathfrak{g}}}}) & =\mathscr{D}_{\hat{\mathfrak{g}}}(s(x)^{[2]_{\hat{\mathfrak{g}}}})=\mathscr{D}_{\hat{\mathfrak{g}}}(s(x)^{[2]_{\hat{\mathfrak{g}}}}+s(x^{[2]_{\mathfrak{g}}})+s(x^{[2]_{\mathfrak{g}}}))\\
 & =\mathscr{D}_{\hat{\mathfrak{g}}}(\varsigma(x)+s(x^{[2]_{\mathfrak{g}}}))=s(\mathscr{D}_{\mathfrak{g}}(x^{[2]_{\mathfrak{g}}}))+\gamma(x^{[2]_{\mathfrak{g}}})+\mathscr{D}_{\mathfrak{h}}(\varsigma(x))
\end{align*}
and
\begin{align*}
[s(x)+h,\mathscr{D}_{\hat{\mathfrak{g}}}(s(x)+h)]_{\hat{\mathfrak{g}}} & =[s(x)+h,s(\mathscr{D}_{\mathfrak{g}}(x))+\gamma(x)+\mathscr{D}_{\mathfrak{h}}(h)]_{\hat{\mathfrak{g}}}\\
 & =[s(x),s(\mathscr{D}_{\mathfrak{g}}(x))]_{\hat{\mathfrak{g}}}+s([x,\mathscr{D}_{\mathfrak{g}}(x)]_{\mathfrak{g}})+s([x,\mathscr{D}_{\mathfrak{g}}(x)]_{\mathfrak{g}})\\
 & =\psi(x,\mathscr{D}_{\mathfrak{g}}(x))+s([x,\mathscr{D}_{\mathfrak{g}}(x)]_{\mathfrak{g}}).
\end{align*}
It follows from $\mathscr{D}_{\hat{\mathfrak{g}}}((s(x)+h)^{[2]_{\hat{\mathfrak{g}}}})=[s(x)+h,\mathscr{D}_{\hat{\mathfrak{g}}}(s(x)+h)]_{\hat{\mathfrak{g}}}$
that
\[
\mathscr{D}_{\mathfrak{h}}(\varsigma(x))+\psi(x,\mathscr{D}_{\mathfrak{g}}(x))=\gamma(x^{[2]_{\mathfrak{g}}})=\omega_{\gamma}.
\]
It implies that $\mathrm{Ob}_{(\mathscr{D}_{\mathfrak{h}},\mathscr{D}_{\mathfrak{g}})\mathrm{II}}^{\hat{\mathfrak{g}}}=\omega_{\gamma}$.
Therefore, we have
\[
(\mathrm{Ob}_{(\mathscr{D}_{\mathfrak{h}},\mathscr{D}_{\mathfrak{g}})\mathrm{I}}^{\hat{\mathfrak{g}}},\mathrm{Ob}_{(\mathscr{D}_{\mathfrak{h}},\mathscr{D}_{\mathfrak{g}})\mathrm{II}}^{\hat{\mathfrak{g}}})=\partial^{1}\gamma\in B_{*2}^{2}(\mathfrak{g};\mathfrak{h})
\]
Thus, the obstruction class $[(\mathrm{Ob}_{(\mathscr{D}_{\mathfrak{h}},\mathscr{D}_{\mathfrak{g}})\mathrm{I}}^{\hat{\mathfrak{g}}},\mathrm{Ob}_{(\mathscr{D}_{\mathfrak{h}},\mathscr{D}_{\mathfrak{g}})\mathrm{II}}^{\hat{\mathfrak{g}}})]\in H_{*2}^{2}(\mathfrak{g};\mathfrak{h})$
is trivial.

Conversely, if the obstruction class $[(\mathrm{Ob}_{(\mathscr{D}_{\mathfrak{h}},\mathscr{D}_{\mathfrak{g}})\mathrm{I}}^{\hat{\mathfrak{g}}},\mathrm{Ob}_{(\mathscr{D}_{\mathfrak{h}},\mathscr{D}_{\mathfrak{g}})\mathrm{II}}^{\hat{\mathfrak{g}}})]\in H_{*2}^{2}(\mathfrak{g};\mathfrak{h})$
is trivial, then there exists a map $\gamma:\mathfrak{g}\rightarrow\mathfrak{h}$
such that $(\mathrm{Ob}_{(\mathscr{D}_{\mathfrak{h}},\mathscr{D}_{\mathfrak{g}})\mathrm{I}}^{\hat{\mathfrak{g}}},\mathrm{Ob}_{(\mathscr{D}_{\mathfrak{h}},\mathscr{D}_{\mathfrak{g}})\mathrm{II}}^{\hat{\mathfrak{g}}})=\partial^{1}\gamma$.
For any $s(x)+h\in\hat{\mathfrak{g}}$, we define $\mathscr{D}_{\hat{\mathfrak{g}}}$
by
\[
\mathscr{D}_{\hat{\mathfrak{g}}}(s(x)+h)=s(\mathscr{D}_{\mathfrak{g}}(x))+\gamma(x)+\mathscr{D}_{\mathfrak{h}}(h).
\]
It can be proved that $\mathscr{D}_{\hat{\mathfrak{g}}}$ is a restricted
derivation of $\hat{\mathfrak{g}}$ such that the diagram in Definition
6.1 commutes. Therefore, $(\mathscr{D}_{\mathfrak{h}},\mathscr{D}_{\mathfrak{g}})$
is extensible.

The proof is complete.
\end{proof}
Clearly, it can be obtained that

\begin{corollary} Let $0\longrightarrow\mathfrak{h}\overset{i}{\longrightarrow}\hat{\mathfrak{g}}\overset{\theta}{\longrightarrow}\mathfrak{\mathfrak{g}}\longrightarrow0$
be a central extension of restricted Lie algebras. If $H_{*2}^{2}(\mathfrak{g};\mathfrak{h})=0$,
then any pair $(\mathscr{D}_{\mathfrak{h}},\mathscr{D}_{\mathfrak{g}})\in\mathrm{Der}^{2}(\mathfrak{h})\times\mathrm{Der}^{2}(\mathfrak{g})$
is extensible.

\end{corollary}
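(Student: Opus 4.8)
The plan is to obtain this as an immediate consequence of Theorem 6.4, with Proposition 6.2 supplying the fact that makes the statement meaningful in the first place. First I would recall that, by Proposition 6.2, for any pair $(\mathscr{D}_{\mathfrak{h}},\mathscr{D}_{\mathfrak{g}})\in\mathrm{Der}^{2}(\mathfrak{h})\times\mathrm{Der}^{2}(\mathfrak{g})$ and any section $s:\mathfrak{g}\rightarrow\hat{\mathfrak{g}}$, the $2$-cochain $(\mathrm{Ob}_{(\mathscr{D}_{\mathfrak{h}},\mathscr{D}_{\mathfrak{g}})\mathrm{I}}^{\hat{\mathfrak{g}}},\mathrm{Ob}_{(\mathscr{D}_{\mathfrak{h}},\mathscr{D}_{\mathfrak{g}})\mathrm{II}}^{\hat{\mathfrak{g}}})$ defined by Eqs. (6.3)-(6.4) is a $2$-cocycle of $(\mathfrak{g},[2]_{\mathfrak{g}})$ with values in the trivial representation $(\mathfrak{h},0)$, and that its class in $H_{*2}^{2}(\mathfrak{g};\mathfrak{h})$ does not depend on $s$; hence the obstruction class of Definition 6.3 is a genuinely well-defined element of $H_{*2}^{2}(\mathfrak{g};\mathfrak{h})$.

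Next I would invoke the hypothesis: since $H_{*2}^{2}(\mathfrak{g};\mathfrak{h})=0$, this group contains only the zero class, so for \emph{every} pair $(\mathscr{D}_{\mathfrak{h}},\mathscr{D}_{\mathfrak{g}})\in\mathrm{Der}^{2}(\mathfrak{h})\times\mathrm{Der}^{2}(\mathfrak{g})$ the obstruction class $[(\mathrm{Ob}_{(\mathscr{D}_{\mathfrak{h}},\mathscr{D}_{\mathfrak{g}})\mathrm{I}}^{\hat{\mathfrak{g}}},\mathrm{Ob}_{(\mathscr{D}_{\mathfrak{h}},\mathscr{D}_{\mathfrak{g}})\mathrm{II}}^{\hat{\mathfrak{g}}})]$ is forced to be trivial. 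Applying the direction of Theorem 6.4 that triviality of the obstruction class implies extensibility, I conclude that $(\mathscr{D}_{\mathfrak{h}},\mathscr{D}_{\mathfrak{g}})$ is extensible; as the pair was arbitrary, this is exactly the assertion of the corollary.

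I do not expect any real difficulty here: all the substantive work — writing down the obstruction cochain, verifying the cocycle identities, proving independence of the section, and establishing the equivalence between extensibility and vanishing of the obstruction class — is already carried out in Proposition 6.2 and Theorem 6.4. The corollary is nothing more than the specialization of Theorem 6.4 to the situation in which $H_{*2}^{2}(\mathfrak{g};\mathfrak{h})$ vanishes, where every cohomology class, and in particular every obstruction class, is automatically zero.
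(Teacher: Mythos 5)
Your proposal is correct and coincides with the paper's reasoning: the paper itself presents this corollary as an immediate consequence of Theorem 6.4 (the obstruction class lies in $H_{*2}^{2}(\mathfrak{g};\mathfrak{h})$, which vanishes by hypothesis, so it is trivial and extensibility follows), with Proposition 6.2 guaranteeing that the class is well defined. Nothing further is needed.
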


At the end of the section, the conditions on a pair $(\mathscr{D}_{\mathfrak{h}},\mathscr{D}_{\mathfrak{g}})\in\mathrm{Der}^{2}(\mathfrak{h})\times\mathrm{Der}^{2}(\mathfrak{g})$
being extensible in any central extension of $(\mathfrak{g},[2]_{\mathfrak{g}})$
by $(\mathfrak{h},[2]_{\mathfrak{h}})$ are considered. According
to Proposition 6.2, we may define a linear map $\Phi:\mathrm{Der}^{2}(\mathfrak{h})\times\mathrm{Der}^{2}(\mathfrak{g})\rightarrow\mathfrak{gl}(H_{*2}^{2}(\mathfrak{g};\mathfrak{h}))$
by
\[
\Phi(\mathscr{D}_{\mathfrak{h}},\mathscr{D}_{\mathfrak{g}})([(\psi,\varsigma)])=[(\mathscr{D}_{\mathfrak{h}}\text{\textopenbullet}\psi+\psi(\mathscr{D}_{\mathfrak{g}}\land\mathrm{Id})+\psi(\mathrm{Id}\land\mathscr{D}_{\mathfrak{g}}),\mathscr{D}_{\mathfrak{h}}\text{\textopenbullet}\varsigma+\psi(\mathrm{Id}\land\mathscr{D}_{\mathfrak{g}}))].
\]

\begin{theorem} Let $(\mathfrak{h},[2]_{\mathfrak{h}})$ be a strongly
abelian restricted Lie algebra and $(\mathfrak{g},[2]_{\mathfrak{g}})$
a restricted Lie algebra. A pair $(\mathscr{D}_{\mathfrak{h}},\mathscr{D}_{\mathfrak{g}})\in\mathrm{Der}^{2}(\mathfrak{h})\times\mathrm{Der}^{2}(\mathfrak{g})$
is extensible in any central extension of $\mathfrak{g}$ by $\mathfrak{h}$
if and only if $\Phi(\mathscr{D}_{\mathfrak{h}},\mathscr{D}_{\mathfrak{g}})=0$.

\end{theorem}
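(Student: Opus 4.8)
The plan is to reduce the statement to Theorem 6.4 by recognizing that, for each cohomology class, the operator $\Phi(\mathscr{D}_{\mathfrak{h}},\mathscr{D}_{\mathfrak{g}})$ evaluated at that class computes exactly the obstruction class of the pair $(\mathscr{D}_{\mathfrak{h}},\mathscr{D}_{\mathfrak{g}})$ in the central extension carrying it. First I would fix a central extension $0\to\mathfrak{h}\to\hat{\mathfrak{g}}\to\mathfrak{g}\to0$ of restricted Lie algebras together with a section $s$, which produces via Eqs.~(6.1)--(6.2) a $2$-cocycle $(\psi,\varsigma)\in Z_{*2}^{2}(\mathfrak{g};\mathfrak{h})$. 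Comparing the formulas~(6.3)--(6.4) for the obstruction cochain $(\mathrm{Ob}^{\hat{\mathfrak{g}}}_{(\mathscr{D}_{\mathfrak{h}},\mathscr{D}_{\mathfrak{g}})\mathrm{I}},\mathrm{Ob}^{\hat{\mathfrak{g}}}_{(\mathscr{D}_{\mathfrak{h}},\mathscr{D}_{\mathfrak{g}})\mathrm{II}})$ with the defining formula for $\Phi$, one sees that this cochain represents precisely $\Phi(\mathscr{D}_{\mathfrak{h}},\mathscr{D}_{\mathfrak{g}})([(\psi,\varsigma)])$, and Proposition 6.2 guarantees that its class does not depend on the chosen section, so the identification is unambiguous. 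With this in place, Theorem 6.4 says: $(\mathscr{D}_{\mathfrak{h}},\mathscr{D}_{\mathfrak{g}})$ is extensible in $\hat{\mathfrak{g}}$ if and only if $\Phi(\mathscr{D}_{\mathfrak{h}},\mathscr{D}_{\mathfrak{g}})([(\psi,\varsigma)])=0$, where $[(\psi,\varsigma)]\in H_{*2}^{2}(\mathfrak{g};\mathfrak{h})$ is the class determined by $\hat{\mathfrak{g}}$.

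Granting this step, the forward implication is immediate: if $\Phi(\mathscr{D}_{\mathfrak{h}},\mathscr{D}_{\mathfrak{g}})=0$ then for \emph{every} central extension the associated obstruction class vanishes, so by the previous step the pair is extensible in each of them. For the converse I would argue directly. Assume $(\mathscr{D}_{\mathfrak{h}},\mathscr{D}_{\mathfrak{g}})$ is extensible in every central extension of $\mathfrak{g}$ by $\mathfrak{h}$, and let $[(\psi,\varsigma)]\in H_{*2}^{2}(\mathfrak{g};\mathfrak{h})$ be arbitrary. I would realise this class by the standard central extension $\hat{\mathfrak{g}}=\mathfrak{g}\oplus\mathfrak{h}$ with $[x+h_{1},y+h_{2}]_{\psi}=[x,y]_{\mathfrak{g}}+\psi(x,y)$ and $(x+h)^{[2]_{\varsigma}}=x^{[2]_{\mathfrak{g}}}+\varsigma(x)$, which is a restricted Lie algebra and a central extension precisely because $(\psi,\varsigma)$ satisfies the cocycle identities~(5.4)--(5.5) (this is the classification of restricted central extensions, cf.\ \cite{EF}), and take the evident section $s(x)=x+0$, whose associated cocycle via~(6.1)--(6.2) is again $(\psi,\varsigma)$. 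By hypothesis $(\mathscr{D}_{\mathfrak{h}},\mathscr{D}_{\mathfrak{g}})$ is extensible in this $\hat{\mathfrak{g}}$, so the first step forces $\Phi(\mathscr{D}_{\mathfrak{h}},\mathscr{D}_{\mathfrak{g}})([(\psi,\varsigma)])=0$; since the class was arbitrary, $\Phi(\mathscr{D}_{\mathfrak{h}},\mathscr{D}_{\mathfrak{g}})=0$.

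The argument is essentially formal once Theorem 6.4 is available, so I do not anticipate a genuine analytic obstacle. The one place that demands care is the identification used throughout the first step: I must check that the obstruction cochain attached to the canonical section of the standard extension equals the cochain $(\mathscr{D}_{\mathfrak{h}}\circ\psi+\psi(\mathscr{D}_{\mathfrak{g}}\wedge\mathrm{Id})+\psi(\mathrm{Id}\wedge\mathscr{D}_{\mathfrak{g}}),\,\mathscr{D}_{\mathfrak{h}}\circ\varsigma+\psi(\mathrm{Id}\wedge\mathscr{D}_{\mathfrak{g}}))$ appearing in the definition of $\Phi$, and that every class of $H_{*2}^{2}(\mathfrak{g};\mathfrak{h})$ indeed arises from an honest restricted central extension. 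Both are routine verifications, but they are exactly the bookkeeping the proof hinges on.
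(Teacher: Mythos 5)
Your proposal is correct and follows essentially the same route as the paper: both directions reduce to Theorem 6.4 by identifying the obstruction class of $(\mathscr{D}_{\mathfrak{h}},\mathscr{D}_{\mathfrak{g}})$ in a given central extension with $\Phi(\mathscr{D}_{\mathfrak{h}},\mathscr{D}_{\mathfrak{g}})$ evaluated at the class of the extension's cocycle, and the converse is handled, exactly as in the paper, by realising an arbitrary class $[(\psi,\varsigma)]$ through the standard extension $\mathfrak{g}\oplus\mathfrak{h}$ with the evident section. The bookkeeping points you flag (matching the obstruction cochain with the formula defining $\Phi$, and section-independence via Proposition 6.2) are precisely the ingredients the paper relies on.
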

\begin{proof}
Suppose that $\Phi(\mathscr{D}_{\mathfrak{h}},\mathscr{D}_{\mathfrak{g}})=0$.
For any central extension $0\longrightarrow\mathfrak{h}\overset{i}{\longrightarrow}\hat{\mathfrak{g}}\overset{\theta}{\longrightarrow}\mathfrak{\mathfrak{g}}\longrightarrow0$,
we choose a section $s:\mathfrak{g}\rightarrow\hat{\mathfrak{g}}$.
Then the pair $(\psi,\varsigma)$, where $\psi:\mathfrak{g}\land\mathfrak{g}\rightarrow\mathfrak{h}$
and $\varsigma:\mathfrak{g}\rightarrow\mathfrak{h}$ are respectively
defined by
\begin{align*}
\psi(x,y) & =[s(x),s(y)]_{\hat{\mathfrak{g}}}+s([x,y]_{\mathfrak{g}}),\\
\varsigma(x) & =s(x)^{[2]_{\hat{\mathfrak{g}}}}+s(x^{[2]_{\mathfrak{g}}}),
\end{align*}
is a 2-cocycle. Furthermore, we have
\begin{align*}
[(\mathrm{Ob}_{(\mathscr{D}_{\mathfrak{h}},\mathscr{D}_{\mathfrak{g}})\mathrm{I}}^{\hat{\mathfrak{g}}},\mathrm{Ob}_{(\mathscr{D}_{\mathfrak{h}},\mathscr{D}_{\mathfrak{g}})\mathrm{II}}^{\hat{\mathfrak{g}}})] & =[(\mathscr{D}_{\mathfrak{h}}\text{\textopenbullet}\psi+\psi(\mathscr{D}_{\mathfrak{g}}\land\mathrm{Id})+\psi(\mathrm{Id}\land\mathscr{D}_{\mathfrak{g}}),\mathscr{D}_{\mathfrak{h}}\text{\textopenbullet}\varsigma+\psi(\mathrm{Id}\land\mathscr{D}_{\mathfrak{g}}))]\\
 & =\Phi(\mathscr{D}_{\mathfrak{h}},\mathscr{D}_{\mathfrak{g}})([(\psi,\varsigma)])=0.
\end{align*}
According to Theorem 6.4, $(\mathscr{D}_{\mathfrak{h}},\mathscr{D}_{\mathfrak{g}})$
is extensible in the above central extension.

Conversely, for any $[(\psi,\varsigma)]\in H_{*2}^{2}(\mathfrak{g};\mathfrak{h})$,
there exists a central extension $0\longrightarrow\mathfrak{h}\overset{i}{\longrightarrow}\mathfrak{g}\oplus\mathfrak{h}\overset{\theta}{\longrightarrow}\mathfrak{\mathfrak{g}}\longrightarrow0$,
where the Lie bracket and 2-mapping on $\mathfrak{g}\oplus\mathfrak{h}$
are respectively defined by
\begin{align*}
[x+h_{1},y+h_{2}] & =[x,y]_{\mathfrak{g}}+\psi(x,y),\\
(x+h)^{[2]} & =x^{[2]_{\mathfrak{g}}}+\varsigma(x).
\end{align*}
Since $(\mathscr{D}_{\mathfrak{h}},\mathscr{D}_{\mathfrak{g}})$ is
extensible in any central extension of $\mathfrak{g}$ by $\mathfrak{h}$.
According to Theorem 6.4, we have
\begin{align*}
\Phi(\mathscr{D}_{\mathfrak{h}},\mathscr{D}_{\mathfrak{g}})([(\psi,\varsigma)]) & =[(\mathscr{D}_{\mathfrak{h}}\text{\textopenbullet}\psi+\psi(\mathscr{D}_{\mathfrak{g}}\land\mathrm{Id})+\psi(\mathrm{Id}\land\mathscr{D}_{\mathfrak{g}}),\mathscr{D}_{\mathfrak{h}}\text{\textopenbullet}\varsigma+\psi(\mathrm{Id}\land\mathscr{D}_{\mathfrak{g}}))]\\
 & =[(\mathrm{Ob}_{(\mathscr{D}_{\mathfrak{h}},\mathscr{D}_{\mathfrak{g}})\mathrm{I}}^{\hat{\mathfrak{g}}},\mathrm{Ob}_{(\mathscr{D}_{\mathfrak{h}},\mathscr{D}_{\mathfrak{g}})\mathrm{II}}^{\hat{\mathfrak{g}}})]=0.
\end{align*}
Thus, $\Phi(\mathscr{D}_{\mathfrak{h}},\mathscr{D}_{\mathfrak{g}})=0$.

The proof is complete.
\end{proof}


\begin{thebibliography}{10}
\bibitem{AKd} Ayala, V., Kizil, E. and de Azevedo Tribuzy, I., On
an algorithm for finding derivations of Lie algebras, Proyecciones,
2012, 31(1): 81-90.

\bibitem{BV} Batalin, I. and Vilkovisky, G., Gauge algebra and quantization,
Phys. Lett. B, 1981, 102: 27-31.

\bibitem{BW} Block, R. and Wilson, R., The restricted simple Lie
algebras are of classical or Cartan type, Proc. Nat. Acad. Sci. U.
S. A. Phys. Sci., 1984, 16: 5271-5274.

\bibitem{BW1} Block, R. and Wilson, R., Classification of the restricted
simple Lie algebras. J. Algebra, 1988, 1: 115-259.

\bibitem{CGG} Coll, V., Gerstenhaber, M. and Giaquinto, A., An explicit
deformation formula with noncommuting derivations, Isr. Math. Conf.
Proc., 1989, 1: 396-403.

\bibitem{Das} Das, A., Leibniz algebras with derivations, J. Homotopy
Relat. Struct., 2021, 16(2): 245-274.

\bibitem{DM} Das, A. and Mandal, A., Extensions, deformation and
categorification of AssDer pairs, arXiv:2002.11415. (2020)

\bibitem{EM} Ehret, Q. and Makhlouf, A., Deformations and Cohomology
of restricted Lie-Rinehart algebras in positive characteristic. arXiv:
2305.16425 (2023).

\bibitem{EFi} Evans, T. and Fialowski, A., Restricted one-dimensional
central extensions of restricted simple Lie algebras. Linear Algebra
Appl., 2017, 513: 96-102.

\bibitem{EF} Evans, T. J. and Fuchs, D., A complex for the cohomology
of restricted Lie algebras. J. fixed point theory appl., 2008, 3:
159-179.

\bibitem{Fel} Feldvoss, J., On the cohomology of restricted Lie algebras.
Comm. Algebra, 1991, 19(10): 2856-2906.

\bibitem{Guo} Guo, S., Central extensions and deformations of Lie
triple systems with a derivation, J. Math. Res. Appl., 2022, 42 (2):
189-198.

\bibitem{Hoc}Hochschild, G., Cohomology of restricted Lie algebras.
Amer. J. Math., 1954, 76: 591-603.

\bibitem{Hum}Humphreys, J., Introduction to Lie Algebras and Representation
Theory, Graduate Texts in Mathematics, vol. 9, Springer-Verlag, New
York-Berlin, 1972.

\bibitem{Jac1}Jacobson, N. , Restricted Lie algebras of characteristic
$p$. Trans. Amer. Math. Soc., 1941, 50: 15-25.

\bibitem{KJ} Kochloukova, D. and Juzga León, Adriana. Finitely presented
metabelian restricted Lie algebras. J. Algebra, 2020, 560: 1107-1145.

\bibitem{Lod} Loday, J., On the operad of associative algebras with
derivation. Georgian Math. J., 2010, 17(2): 347-372.

\bibitem{Mag} Magid, A., Lectures on Differential Galois Theory.
American Mathematical Society, Providence, RI, 1994.

\bibitem{MS} Maletesta, N. and Siciliano, S., Five-dimensional $p$-nilpotent
restricted Lie algebras over algebraically closed fields of characteristic
$p>3$. J. Algebra, 2023, 634: 755-789.

\bibitem{May} May, J. P., The cohomology of restricted Lie algebras
and of Hopf algebras, J. Algebra, 1966, 3: 123-146.

\bibitem{SU} Siciliano, S. and Usefi, H., Perfect and semiperfect
restricted enveloping algebras. J. Algebra, 2017, 472: 507-518.

\bibitem{Str} Strade, H., The classification of the simple modular
Lie algebras: I. Determination of the two-sections, Ann. of Math.,
1989, 130(3): 643-677.

\bibitem{Str2} Strade, H., The classification of the simple modular
Lie algebras: III. Solution to the classical case, Ann. of Math.,
1991, 133(3) : 577-604.

\bibitem{Str3} Strade, H., The classification of the simple modular
Lie algebras: IV. Determining the associated graded algebra, Ann.
of Math., 1993, 138(1): 1-59.

\bibitem{SF}Strade, H., Farnsteiner, R., Modular Lie Algebras and
Their Representations. Marcel Dekker, New York, 1988.

\bibitem{TFS}Tang, R., Frégier, Y., Sheng, Y., Cohomologies of a
Lie algebra with a derivation and applications. J. Algebra, 2019,
534: 65-99.

\bibitem{Vor} Voronov, T., Higher derived brackets and homotopy algebras.
J. Pure Appl. Algebra. 2005, 202: 133-153.

\bibitem{WMSC} Wu, X., Ma, Y., Sun, B. and Chen, L., Cohomology of
Leibniz triple systems with derivations., J. Geom. Phys., 2022, 179:
104594, 11 pp.

\bibitem{Xu} Xu, S., Cohomology, derivations and abelian extensions
of 3-Lie algebras, J. Algebra Appl., 2019, 18 (7): 1950130.
\end{thebibliography}
\end{document}